\documentclass[12pt,reqno]{amsart}
\usepackage{etex}
\usepackage{dsfont,amsmath,amsfonts,amscd,amssymb,graphicx,mathrsfs,eufrak,upgreek}
\usepackage[dvips,all,arc,curve,color,frame]{xy}
\usepackage[usenames]{color}
\usepackage{setspace}
\usepackage{array,multirow,booktabs,longtable}

\makeatletter
\newcommand{\customlabel}[3]{#3\def\@currentlabel{#2}\label{#1}}
\makeatother

\usepackage[colorlinks,hyperfootnotes=false]{hyperref}
\newcommand{\hrefsf}[2]{\href{#1}{\textsf{#2}}}
\newcommand{\hrefsfopt}[3]{{#1}\hrefsf{#2}{#3}}

\setcounter{tocdepth}{1}

\setlength{\heavyrulewidth}{1.2pt}
\setlength{\abovetopsep}{4pt}

\setlength{\marginparwidth}{1in}
\newcommand{\marginparstretch}{0.6}
\let\oldmarginpar\marginpar
\renewcommand\marginpar[1]{\-\oldmarginpar[\framebox{\setstretch{\marginparstretch}\begin{minipage}{\marginparwidth}{\raggedleft\tiny #1}\end{minipage}}]{\framebox{\setstretch{\marginparstretch}\begin{minipage}{\marginparwidth}{\raggedright\tiny #1}\end{minipage}}}}

\usepackage[colorlinks]{hyperref}
\usepackage{tikz,tikz-cd,mathrsfs}
\usetikzlibrary{arrows,positioning,calc,intersections}

\usepackage{pgfplots}
\pgfplotsset{compat=1.8}
\usetikzlibrary{decorations.markings}

\tikzset{
  mid arrow/.style={postaction={decorate,decoration={
        markings,
        mark=at position .5 with {\arrow{stealth}}
      }}},
}

\tikzset{
        Point/.style={circle,draw=black,circle,fill=black,inner sep=0pt, minimum size=2pt},
        DynkinBlack/.style={circle,draw=black,circle,fill=black,inner sep=0pt, minimum size=4pt},
         DynkinWhite/.style={circle,draw=black,circle,fill=white,inner sep=0pt, minimum size=4pt},
        vertex/.style={circle,fill=black,inner sep=1pt,outer sep=8pt},
        star/.style={circle,fill=yellow,inner sep=0.75pt,outer sep=0.75pt},
        gap/.style={inner sep=0.5pt,fill=white}}

\tikzstyle{mybox} = [draw=black, fill=blue!10, very thick,
    rectangle, rounded corners, inner sep=10pt, inner ysep=20pt]
\tikzstyle{boxtitle} =[fill=blue!50, text=white,rectangle,rounded corners]

\newcommand{\arrow}[2][20]
 {
  \hspace{-5pt}
  \begin{tikzpicture}
   \node (A) at (0,0) {};
   \node (B) at (#1pt,0) {};
   \draw [#2] (A) -- (B);
  \end{tikzpicture}
  \hspace{-5pt}
 }

\newcommand{\arrowsplit}{0.4ex}

\addtolength{\hoffset}{-0.5cm} \addtolength{\textwidth}{1cm}
\addtolength{\voffset}{-1.5cm} \addtolength{\textheight}{2cm}

\newcounter{envcount} \numberwithin{envcount}{section}
\newcounter{keyenvcount}

\newtheorem{theorem}[envcount]{Theorem}
\newtheorem{corollary}[envcount]{Corollary}

\newtheorem{keytheorem}[keyenvcount]{Theorem}

\newtheorem{keyproposition}[keyenvcount]{Proposition}

\newtheorem{proposition}[envcount]{Proposition}
\newtheorem{lemma}[envcount]{Lemma}
\newtheorem{definition}[envcount]{Definition}
\newtheorem{assumption}[envcount]{Assumption}

\theoremstyle{definition} 

\newtheorem{example}[envcount]{Example}
\newtheorem{remark}[envcount]{Remark}
\newtheorem*{keyremark}{Remark}
\newtheorem*{question}{Question}
\newtheorem{notation}[envcount]{Notation}
\newtheorem{setup}[envcount]{Setup}

\theoremstyle{remark}
\newtheorem*{acknowledgements}{Acknowledgements}
\newtheorem*{conventions}{Conventions}

\newcommand{\defined}[1]{\emph{#1}}

\def\D{\mathop{\rm{D}^{}}\nolimits}
\def\F{\mathop{\rm{F}^{}}\nolimits}
\newcommand\Dres[2]{\D(#1|#2)}

\def\flop{{\sf{F}}}

\def\Id{\mathop{\sf{Id}}\nolimits}
\def\Idmatrix{\mathds{1}}

\def\MA{M_{\rm{A}}}
\def\cMA{\overline{M}_{\!\rm{A}}}
\def\MB{M_{\kern 0.5pt \rm{B}}}
\def\cMB{\overline{M}_{\kern -0.5pt \rm{B}}}
\def\QA{\cQ_{\rm{A}}}
\def\QB{\cQ_{\kern 0.5pt \rm{B}}}

\newcommand{\cD}{\mathcal{D}}

\newcommand{\cL}{\mathcal{L}}
\newcommand{\cM}{\mathcal{M}}
\newcommand{\cN}{\mathcal{N}}
\newcommand{\cO}{\mathcal{O}}
\newcommand{\cP}{\mathcal{P}}
\newcommand{\cQ}{\mathcal{Q}}

\newcommand{\cS}{\mathcal{S}}

\newcommand\compose{\circ}
\newcommand\placeholder{-}

\newlength\tempWidth

\newcommand\KS{Kapranov--Schechtman}
\newcommand\KandS{Kapranov and Schechtman}
\newcommand\linzn{\cM}
\newcommand\sphFun{\mathsf{S}}
\newcommand\twistFun{\mathsf{T}}
\newcommand\cotwistFun{\mathsf{C}}
\newcommand\Ladj{\mathsf{LA}}
\newcommand\Radj{\mathsf{RA}}

\newcommand\labelpos{1.3}
\newcommand\labelposy{1.1}
\newcommand\labelperiod{2.8}
\newcommand\picardspace{0.5}
\newcommand\labelposrelax{0.2}
\newcommand\axisposrelax{0.4}

\newcommand\labeladjust{0.05}
\newcommand\grayCol{gray!50}

\newcommand{\axes}[4]{
 \draw[->, \grayCol] (-2.3-#2+#3,0) to (2.4+#2,0); 
 \draw[->, \grayCol] (0,-1.7-#1) to (0,1.8+#1+#4); 
}

\newcommand{\pullbackPic}[6]{
\begin{tikzpicture}[node distance=1cm, auto, line width=0.5pt]

\axes{0.5}{2.8}{0}{0}

 \node (pos) at (\labeladjust-\labelperiod,-\labelposy-\labelposrelax)  {#1};
 \node (pos) at (\labeladjust,-\labelposy-\labelposrelax)  {#1};
 \node (pos) at (\labeladjust+\labelperiod,-\labelposy-\labelposrelax)  {#1};

 \node (neg) at (\labeladjust-\labelperiod,+\labelposy+\labelposrelax) {#2};
 \node (neg) at (\labeladjust,+\labelposy+\labelposrelax) {#2};
 \node (neg) at (\labeladjust+\labelperiod,+\labelposy+\labelposrelax) {#2};

 \draw[<-, rounded corners=15pt]  (-1.85-\picardspace,-1.8) -- (-1-\picardspace,-2) -- (-0.35-\picardspace,-1.8) ;
 \node (b) at (-1-\picardspace,-2.2) {\scriptsize #3};

 \draw[->, rounded corners=15pt]  (1.85+\picardspace,-1.8) -- (1+\picardspace,-2) -- (0.35+\picardspace,-1.8) ;
 \node (b) at (+1+\picardspace,-2.2) {\scriptsize #3};

 \draw[->, rounded corners=15pt]  (-1.85-\picardspace,1.8) -- (-1-\picardspace,2) -- (-0.35-\picardspace,1.8) ;
 \node (c) at (-1-\picardspace,2.2) {\scriptsize #4};

 \draw[<-, rounded corners=15pt]  (1.85+\picardspace,1.8) -- (1+\picardspace,2) -- (0.35+\picardspace,1.8) ;
 \node (c) at (+1+\picardspace,2.2) {\scriptsize #4};


 \draw[->, rounded corners=15pt]  (-\labelperiod-0.3,-0.85) -- (-\labelperiod-0.5,0) -- (-\labelperiod-0.3,0.85) ;
 \node (b) at (-\labelperiod-0.7,0) {\scriptsize #5};
 \draw[<-, rounded corners=15pt]  (-\labelperiod+0.3,-0.85) -- (-\labelperiod+0.5,0) -- (-\labelperiod+0.3,0.85) ;
 \node (c) at (-\labelperiod+0.7,0) {\scriptsize #6};

 \draw[->, rounded corners=15pt]  (-0.3,-0.85) -- (-0.5,0) -- (-0.3,0.85) ;
 \node (b) at (-0.7,0) {\scriptsize #5};
 \draw[<-, rounded corners=15pt]  (0.3,-0.85) -- (0.5,0) -- (0.3,0.85) ;
 \node (c) at (+0.7,0) {\scriptsize #6};

 \draw[->, rounded corners=15pt]  (+\labelperiod-0.3,-0.85) -- (+\labelperiod-0.5,0) -- (+\labelperiod-0.3,0.85) ;
 \node (b) at (+\labelperiod-0.7,0) {\scriptsize #5};
 \draw[<-, rounded corners=15pt]  (+\labelperiod+0.3,-0.85) -- (+\labelperiod+0.5,0) -- (+\labelperiod+0.3,0.85) ;
 \node (c) at (+\labelperiod+0.7,0) {\scriptsize #6};

\node (dots) at (-\labelperiod-1.7, 1.35) {$\dots$};
\node (dots) at (+\labelperiod+1.7, 1.35) {$\dots$};
\node (dots) at (-\labelperiod-1.7, -1.35) {$\dots$};
\node (dots) at (+\labelperiod+1.7, -1.35) {$\dots$};

\draw (\labelperiod,0) circle (.4ex);
\draw (-\labelperiod,0) circle (.4ex);
\draw (0,0) circle (.4ex);

\end{tikzpicture}
}

\newcommand{\monodromiesCentralCharge}[4]{
\begin{tikzpicture}[node distance=1cm, auto, line width=0.5pt]

\axes{-0.3}{-0.35}{0}{0.4}

 \node (pos) at (0,1.2)  {#1};

 \draw[<-, rounded corners=10pt] (0.1,0.6) -- (0.35,-0.4) -- (-0.35,-0.4) -- (-0.1,0.6);
 \draw[<-, rounded corners=10pt]  (0.4,0.85) -- (1.5,0) -- (1,-0.5) -- (0.3,0.7) ;
 \draw[->, rounded corners=10pt]  (-0.4,0.85) -- (-1.5,0) -- (-1,-0.5) -- (-0.3,0.7) ;

\node at (-1.5,-0.8) {\scriptsize #2};
\node at (0,-0.9) {\scriptsize #3};
\node at (1.5,-0.8) {\scriptsize #4};

\node (dots) at (-1.5, 0.5) {$\dots$};
\node (dots) at (1.6, 0.5) {$\dots$};

\draw[fill=black] (0,0) circle (.2ex);
\draw[fill=black] (+1,0) circle (.2ex);
\draw[fill=black] (-1,0) circle (.2ex);

\end{tikzpicture}
}

\newcommand{\monodromiesStandardFlop}[4]{
\begin{tikzpicture}[node distance=1cm, auto, line width=0.5pt]

\axes{0.3}{\axisposrelax}{0.8}{-0.1}

 \node (neg) at (+\labelpos+\labelposrelax,-\labeladjust) {#1};

 \draw[<-, rounded corners=10pt] (0.6,-0.1) -- (-0.4,-0.35) -- (-0.4,0.35) -- (0.6,0.1);
 \draw[<-, rounded corners=10pt]  (0.85,-0.4) -- (0,-1.5) -- (-0.5,-1) -- (0.7,-0.3) ;
 \draw[->, rounded corners=10pt]  (0.85,0.4) -- (0,1.5) -- (-0.5,1) -- (0.7,0.3) ;

\draw (0,-1) circle (.4ex);
\draw (0,0) circle (.4ex);
\draw (0,+1) circle (.4ex);

\node at (-0.8,-1.2) {\scriptsize #2};
\node at (-0.8,0) {\scriptsize #3};
\node at (-0.7,1.2) {\scriptsize #4};

\node (dots) at (0.5, -1.3) {$\vdots$};
\node (dots) at (0.5, 1.5) {$\vdots$};

\end{tikzpicture}
}

\newcommand{\halfMonodromiesStandardFlop}[3]{
\begin{tikzpicture}[node distance=1cm, auto, line width=0.5pt]

\axes{0.3}{\axisposrelax}{0}{-0.1}

 \node (pos) at (-\labelpos-\labelposrelax,-\labeladjust) {#2};
 \node (neg) at (+\labelpos+\labelposrelax,-\labeladjust) {#1};

 \draw[<->, rounded corners=30pt]  (-1.1,-0.4) -- (0,-1.8) -- (1.1,-0.4) ;
 \node (b) at (1.0,-1.2) {\scriptsize $#3^{-1}$};
 \draw[<->, rounded corners=15pt]  (-0.85,-0.3) -- (0,-0.8) -- (0.85,-0.3) ;
 \node (b) at (0.4,-0.8) {\scriptsize $#3^0$};
 \draw[<->, rounded corners=15pt]  (-0.85,0.3) -- (0,0.7) -- (0.85,0.3) ;
 \node (b) at (0.4,0.8) {\scriptsize $#3^1$};
 \draw[<->, rounded corners=30pt]  (-1.1,0.4) -- (0,1.8) -- (1.1,0.4) ;
 \node (b) at (0.8,1.3) {\scriptsize $#3^2$};

\draw (0,-1) circle (.4ex);
\draw (0,0) circle (.4ex);
\draw (0,+1) circle (.4ex);

\node (dots) at (-0.6, -1.4) {$\vdots$};
\node (dots) at (-0.6, 1.6) {$\vdots$};

\end{tikzpicture}
}

\newcommand{\monodromiesSKMSshared}[8]{

\axes{0}{2.8}{0}{0}

 \draw[->,looseness=10] (2.6,-0.3) to[out=-45, in=45] (2.6,0.3);
 \draw[->,looseness=10] (-2.6,0.3) to[out=180-45, in=180+45] (-2.6,-0.3);

\node at (4.3,0) {\scriptsize #3};
\node at (-4.3,0) {\scriptsize #4};

\draw (3.3,0) circle (.4ex);
\draw (-3.3,0) circle (.4ex);

\ifnum #7=0
  \node at (3.3,-0.3) {\scriptsize $+#6$};
  \node at (-3.3,-0.3) {\scriptsize $-#6$};
  \draw[fill=black] (0,0) circle (.2ex);
\else
  \node at (0,-0.3) {\scriptsize #8};
  \draw (0,0) circle (.4ex);
\fi

}

\newcommand{\monodromiesSKMS}[8]{
\begin{tikzpicture}[node distance=1cm, auto, line width=0.5pt]

\monodromiesSKMSshared{#1}{#2}{#3}{#4}{#5}{#6}{#7}{#8}

 \draw[->,looseness=1]  (0.85,-0.5) to[out=-90-45, in=-45] (-0.85,-0.5) ;
 \draw[->,looseness=1] (-0.85,0.4)  to[out=45, in=90+45] (0.85,0.4) ;

 \node (b) at (0,-1.1) {\scriptsize #5};
 \node (c) at (0,1.0) {\scriptsize #5};

 \node (neg) at (+\labelpos+\labelposrelax,-\labeladjust) {#1};
 \node (pos) at (-\labelpos-\labelposrelax,-\labeladjust) {#2};

\end{tikzpicture}
}

\newcommand{\monodromiesSKMSpair}[7]{
\begin{tikzpicture}[node distance=1cm, auto, line width=0.5pt]

\monodromiesSKMSshared{#1}{#2}{#3}{#4}{#5}{#6}{0}{}

\draw (0,0) ellipse (2.2 and 0.8);

\draw[->] (-2.4, 0.3) arc (180-5:5:2.4 and 0.8);
\draw[<-] (-2.4, -0.3) arc (180-5:5:2.4 and -0.8);

 \node (b) at (0,-1.3) {\scriptsize #5};
 \node (c) at (0,1.3) {\scriptsize #5};

 \node (neg) at (+1.9,-0.15) {\scriptsize #1};
 \node (pos) at (-1.8,-0.15) {\scriptsize #2};
\draw[fill=black] (2.2,0) circle (.2ex);
\draw[fill=black] (-2.2,0) circle (.2ex);

\draw[fill=black] (0,0) circle (.2ex);

 \node (D) at (-1.2,0.4) {#7};

\end{tikzpicture}
}

\newcommand{\skeletonForGIT}[4]{
\begin{tikzpicture}[node distance=1cm, auto, line width=0.5pt]

\axes{0}{0}{0}{0}

\draw[fill=black] (0,-1) circle (.2ex);
\draw[fill=black] (0,1) circle (.2ex);
\draw[fill=black] (0,0) circle (.2ex);
\draw[fill=black] (+1,0) circle (.2ex);

\draw (0.8,0) to (1,0);
\draw (0,0) to  (0.8,0);
\draw (0,-1) to [out=60,in=180] (0.8,0);
\draw (0,1) to [out=-50,in=180] (0.8,0);

\node (dots) at (0.2, -1.3) {$\vdots$};
\node (dots) at (0.2, 1.5) {$\vdots$};

 \node (label) at (0.7,-0.6) {#1};

 \node at (+1.3,0.2) {\scriptsize #2};
 \node at (-0.3,1) {\scriptsize #3};
 \node at (-0.3,-1) {\scriptsize #4};

\end{tikzpicture}
}

\newcommand{\skeletonForDiskShared}[7]{

\ifnum #7=0
  \draw[line width=0.5pt] (0,0) circle (1.5);
\else
  \draw[\grayCol,line width=0.5pt] (0,0) circle (1.5);
\fi

\draw[color=#4] (1.2,0) to (1.5,0);
\draw[color=#4] (-0.1,0.8) to [out=-45,in=180-40] (0.15,0.55);
\draw[color=#4] (0.15,0.55) to [out=-40,in=180] (1.2,0);
\draw[color=#4] (0,-1) to [out=60,in=180] (1.2,0);

\ifnum #5=0
  \draw[color=#4] (-0.2,0.3) to [out=-20,in=180] (1.2,0) ;
\else
   \draw[color=#4,looseness=1] (-0.2,0.3) to [out=180,in=-90] (-0.7,0.8) to [out=90,in=180] (-0.2,1.2) to [out=0,in=90+20] (0.3,0.7) to [out=-90+20,in=180] (1.2,0);
\fi

\draw[fill=black] (+1.5,0) circle (.2ex);
\draw[fill=black] (-0.1,0.8) circle (.2ex);
\draw[fill=black] (-0.2,0.3) circle (.2ex);
\draw[fill=black] (0,-1) circle (.2ex);

 \node (neg) at (+1.7,0.15) {\scriptsize #2};

  \node (label) at (0.7,-0.6) {#3};

\ifnum #6=0
 \node (neg) at (-0.35,0.8) {\scriptsize ${#1}_{n}$};
 \node (neg) at (-0.3,0.1) {\scriptsize ${#1}_{n-1}$};
 \node (neg) at (-0.25,-1) {\scriptsize ${#1}_{1}$};
\else
 \node (neg) at (-0.45,0.7) {\scriptsize ${#1}_{n}$};
 \node (neg) at (-0.35,0.1) {\scriptsize ${#1}_{n-1}$};
 \node (neg) at (-0.25,-1) {\scriptsize ${#1}_{1}$};
\fi
\node (dots) at (-0.1, -0.3) {$\vdots$};

}

\newcommand{\skeletonForDisk}[3]{
\begin{tikzpicture}[line width=0.5pt]

\skeletonForDiskShared{#1}{#2}{#3}{black}{0}{0}{0}

\end{tikzpicture}
}

\newcommand{\skeletonForDiskTwisted}[3]{
\begin{tikzpicture}[line width=0.5pt]

\skeletonForDiskShared{#1}{#2}{#3}{black}{1}{0}{0}

\end{tikzpicture}
}

\newcommand{\loopForSkeleton}[4]{
\begin{tikzpicture}[node distance=1cm, auto, line width=0.5pt]

\skeletonForDiskShared{#1}{#2}{#3}{\grayCol}{0}{1}{0}

\draw (1.2,0) to (1.5,0);
\draw (0.15,0.55) to [out=180-40,in=180+90] (-0.3,0.8) to [out=90,in=180+10] (-0.15,1.0) to [out=10,in=180-90] (0.1,0.75) to [out=-90,in=180-40] (0.15,0.55) to [out=-40,in=180] (1.2,0);

\draw[->] (-0.15,1.0) -- (-0.16,1.0);
\node (loop) at (0.15,1.15) {\scriptsize #4};

\end{tikzpicture}
}

\newcommand{\skeletonForSurfaceShared}[7]{

\skeletonForDiskShared{#1}{#2}{#3}{black}{0}{0}{#7}
\draw (-2.1,-0.2) to [out=90,in=0+180] (0,2.1) to [out=0,in=0+180] (3.2,0.5) to [out=0,in=0+180] (5,1) to [out=0,in=-90+180] (6,0) to [out=-90,in=180+180] (2,-2.25) to [out=180,in=90+180] (-2.1,-0.2);

\ifnum #7=0
  \node (D) at (-1.25,1.3) {#5};
\fi

\node (S) at (-1.75,1.75) {#6};

\begin{scope}[transform canvas={xshift=2.85cm,yshift=-0.35cm}]
\draw[opacity=0,name path=mid] (0,-0.1) -- (2.5,-0.1);
\draw[name path=lower] (0,0) to [out=-30,in=-180+30] (2,0);
\draw[name intersections={of=lower and mid}](intersection-1) to [out=30,in=-180-30] (intersection-2);

\draw[fill=black] (2.25,-0.5) circle (.2ex);
 \node (y) at (2.25+0.2,-0.5+0.15) {\scriptsize #4};
 
\end{scope};
}

\newcommand{\skeletonForSurface}[6]{
\begin{tikzpicture}[line width=0.5pt]

\skeletonForSurfaceShared{#1}{#2}{#3}{#4}{#5}{#6}{0}

\end{tikzpicture}
}

\newcommand{\skeletonForPuncturedSurface}[9]{
\begin{tikzpicture}[line width=0.5pt]

\skeletonForSurfaceShared{#1}{#2}{#3}{#4}{#5}{#6}{1}

\draw[fill=black] (2,-1.5) circle (.2ex);
 \node (y) at (2+0.2,-1.5+0.15) {\scriptsize #7};

\draw[dotted] (2,-1.5) to [out=90,in=-90] (1.5,0);
 \node (y) at (1.85,-0.4) {#8};

\draw (-1.6,0) to [out=90,in=0+180] (0,1.6) to [out=0,in=180-40] (1.6,0.8) to [out=-40,in=-80+180] (2.2,-0.4) to [out=-80,in=-90+180] (2.5,-1.4) to [out=-90,in=180+180] (2.1,-1.8) to [out=180,in=180+170] (1.5,-1.3) to [out=170,in=180+180] (0,-1.6) to [out=180,in=90+180] (-1.6,0);

 \node (y) at (1.4,-1.6) {#9};

\draw (2,-1.5) to [out=90,in=180] (1.2,0);

\end{tikzpicture}
}

\newcommand{\skeletonForPair}[4]{
\begin{tikzpicture}[node distance=1cm, auto, line width=0.5pt]

\draw[line width=0.5pt] (0,0) circle (1.5);

\draw[fill=black] (0.1,-0.2) circle (.2ex);
\draw[fill=black] (170:1.5) circle (.2ex);
\draw[fill=black] (-5:1.5) circle (.2ex);

\draw (0.1,-0.2) to [out=20,in=160] (-5:1.5);
\draw (0.1,-0.2) to [out=20-180,in=-30] (170:1.5);

 \node (label) at (0.8,-0.4) {#3};

 \node (pos) at (-1.1,0.35)  {\scriptsize #1};
 \node (neg) at (+1.9,-0.1) {\scriptsize #2};
 \node (base) at (0.1,0.1) {\scriptsize #4};

\end{tikzpicture}
}

\hyphenation{Groth-en-dieck}
\hyphenation{scho-bers}

\begin{document}

\title[Perverse schobers: constructions and examples]{\phantom{text}\vspace{-0.8cm}Perverse schobers on Riemann surfaces: \\ constructions and examples}
\author{W.\ Donovan}
\address{Yau Mathematical Sciences Center, Tsinghua University, Haidian District, Beijing 100084, China}
\email{donovan@mail.tsinghua.edu.cn}
\begin{abstract}
This note studies perverse sheaves of categories, or schobers, on Riemann surfaces, following ideas of \KandS{}~\cite{KS2}. For certain wall crossings in geometric invariant theory, I construct a schober on the complex plane, singular at each imaginary integer. I use this to obtain schobers for standard flops: in the $3$-fold case, I relate these to a further schober on a partial compactification of a stringy K\"ahler moduli space, and suggest an application to mirror symmetry.
\end{abstract}

\subjclass[2010]{Primary 14F05; 
Secondary 14E05, 
14J33, 
14L24, 
18E30. 
}
\thanks{The author is supported by World Premier International Research Center Initiative (WPI Initiative), MEXT, Japan, and JSPS KAKENHI Grant Number~JP16K17561.}
\maketitle
\parindent 20pt
\parskip 0pt

\tableofcontents

\section{Introduction}
\label{section intro}

Perverse sheaves are key objects in the interaction of algebraic geometry, analysis, and topology~\cite{Kashiwara1,BBD1982}. They are related to D-modules by the Riemann--Hilbert correspondence~\cite{Kashiwara2,Mebkhout}, and thence control global behaviour for differential equations.

Perverse sheaves may be defined in terms of constructible sheaves, but often admit characterizations using quiver representations, in particular by work of Beilinson~\cite{Beilinson1984}, and Gelfand--MacPherson--Vilonen~\cite{GMV}.  \KandS{} have used such descriptions to define perverse sheaves of (triangulated) categories, or schobers~\cite{KS2}: in this approach, the vector spaces in a quiver representation are replaced by categories, and the linear maps by functors satisfying appropriate conditions.

Harder and Katzarkov recently used schobers to given new proofs for cases of homological mirror symmetry~(HMS), and study noncommutative projective spaces~\cite{HarKat}. Nadler has used them to prove HMS for certain Landau--Ginzburg models~\cite{Nadler}. They have been applied to study categorified Picard--Lefschetz theory by Katzarkov, Pandit, and~Spaide~\cite{KPS}. Bodzenta and Bondal have associated them to flops of curves~\cite{BB}. In previous work~\cite{Don}, I constructed them for wall crossings in geometric invariant theory (GIT).

Schobers are currently defined only on a few classes of spaces: \KandS{} define them on a disk, and indicate a generalization to Riemann surfaces. In this paper, I explain this generalization, and give simple operations on the resulting objects.

I give examples for wall crossings in GIT, and standard flops, demonstrating that functors which appear in these settings have natural interpretations in terms of schobers. My results are summarized in Subsection~\ref{sect results}.

In particular, I show that functors associated to a standard 3-fold flop, including the Bondal--Orlov flop functors, may be encoded in a schober on the projective line minus two points, thought of as a partially compactified stringy K\"ahler moduli space: I give a possible mirror symmetry interpretation of this in Subsection~\ref{sect MS}.

\begin{keyremark} Harder and Katzarkov~\cite{HarKat} give a related definition for schobers on Riemann surfaces. Continuing work of Dyckerhoff, Kapranov, Schechtman, and Soibelman seeks to further develop such definitions~\cite{DKSS}. Schobers on higher-dimensional spaces have been discussed only in restricted cases: see~\cite[Section~4]{KS2}, and upcoming work of Bondal--Kapranov--Schechtman~\cite{BKS}.
\end{keyremark}

\subsection{Results}\label{sect results}

One of the simplest classes of schobers consists of perverse sheaf of categories on a disk, singular at a point, known as `spherical pairs'. In previous work~\cite{Don}, I constructed spherical pairs for certain wall crossings in GIT stability. In this paper, under the same assumptions, I obtain a perverse sheaf on~$\mathbb{C}$, singular at~$i\mathbb{Z}$, in Theorem~\ref{keytheorem schober}.

\begin{keyremark} Perverse sheaves restrict to local systems on their smooth loci: analogously, schobers yield local systems of categories. Such local systems are indicated below by an action of a fundamental group on a category, or a fundamental groupoid on categories: full definitions are given in Section~\ref{subsection.loc_sys}.
\end{keyremark}

Take a toric GIT problem $V/G$ where $V$ is a smooth, equivariantly Calabi--Yau variety, and consider a `simple balanced' wall crossing. Informally, this is a wall crossing which affects only a single stratum of the GIT stratification: for details, see Definition~\ref{definition simple balanced}. Let $X_\pm$ denote the quotients on either side of the wall, and $\D(X)$ be the bounded derived category of coherent sheaves on~$X$.

\begin{keytheorem}\label{keytheorem schober}\emph{(Theorem~\ref{theorem schober}, Corollary~\ref{corollary schober})} For a simple balanced wall crossing as above, there exists a schober on $\mathbb{C}$, singular at $i\mathbb{Z}$, with
\begin{itemize}
\item generic fibre $\D(X _{-}  )\cong\D(X_{+} )$, and
\item an induced local system of categories on $\mathbb{C}-i\mathbb{Z}$ which may be presented in the following ways:
\begin{center}
\begin{tikzpicture}
 \node at (0,0) {$\monodromiesStandardFlop{$\D(X_+)$}{$\twistFun^{-1}$}{$\twistFun^{0}$}{$\twistFun^{1}$}$};
 \node at (6,0) {$\halfMonodromiesStandardFlop{$\D(X_+)$}{$\D(X_-)$}{\Phi}$};
 \end{tikzpicture}
\end{center}
\end{itemize}
The~$\twistFun^w$ are spherical twists obtained by Halpern-Leistner and Shipman~\cite{HLShipman}, and the~$\Phi^w$ are window equivalences as given by Halpern-Leistner~\cite{HL}, and Ballard--Favero--Katzarkov~\cite{BFK}.
\end{keytheorem}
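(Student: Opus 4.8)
The plan is to assemble the schober from the spherical pair produced in~\cite{Don}, to spread it out along the imaginary axis using the description of schobers on Riemann surfaces developed above, and then to reconcile the two stated presentations of the resulting local system by means of the window equivalences.

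First I would recall the input. By the simple balanced hypothesis (Definition~\ref{definition simple balanced}), the construction of~\cite{Don} attaches to the wall crossing a spherical pair --- equivalently, by \KS{}~\cite{KS2}, a perverse schober on a disk with one marked point --- with generic fibre $\D(X_+)\cong\D(X_-)$; the equivariant Calabi--Yau hypothesis on $V$ is what forces the relevant functor to be spherical, and its twist is the autoequivalence $\twistFun^{0}$ of Halpern-Leistner--Shipman~\cite{HLShipman}. I would also record the variation-of-GIT input: for each $w\in\mathbb{Z}$ there is a window equivalence $\Phi^{w}\colon\D(X_+)\xrightarrow{\sim}\D(X_-)$ of Halpern-Leistner~\cite{HL} and Ballard--Favero--Katzarkov~\cite{BFK}; tensoring by the wall-crossing line bundle $\cO(1)$ carries the weight-$w$ window onto the weight-$(w+1)$ one; and, writing $\twistFun^{w}$ for the conjugate of $\twistFun^{0}$ by $\cO(w)$, one has $\twistFun^{w}\simeq(\Phi^{w+1})^{-1}\compose\Phi^{w}$, which is the comparison of~\cite{HLShipman}.

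Next I would globalize. I take the spanning tree of $\mathbb{C}$ to be the imaginary axis with vertex set $i\mathbb{Z}$, onto which $\mathbb{C}$ deformation retracts. Each open edge carries the category $\D(X_+)$; the transition across the edge joining $ik$ and $i(k+1)$ is tensoring by $\cO(1)$; and at the bivalent vertex $ik$, read in the frame twisted by $\cO(k)$, I insert the spherical pair above. One then checks that this decoration satisfies the schober axioms --- a local statement at each vertex, where it is the spherical-pair condition of~\cite{Don}, together with the evident compatibility of the $\cO(1)$-transitions --- giving the desired schober on $\mathbb{C}$ singular at $i\mathbb{Z}$, with generic fibre $\D(X_-)\cong\D(X_+)$ (Theorem~\ref{theorem schober}); since the datum is naturally equivariant for $z\mapsto z+i$ it in fact descends to $\mathbb{C}^{*}=\mathbb{C}/i\mathbb{Z}$. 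Restricting to $\mathbb{C}-i\mathbb{Z}$, whose fundamental group is free on small loops $\gamma_k$, one reads off the two presentations: in the trivialization just used, $\gamma_k$ acts by the vertex twist in the $\cO(k)$-frame, namely $\twistFun^{k}$, which is the first presentation; in the alternative trivialization by the two half-planes, with fibre $\D(X_+)$ for $\Re z>0$ and $\D(X_-)$ for $\Re z<0$ glued by $\Phi^{k}$ across the segment below $ik$, the loop $\gamma_k$ acts by $(\Phi^{k+1})^{-1}\compose\Phi^{k}$, which equals $\twistFun^{k}$ by the identity above, so the presentations agree (Corollary~\ref{corollary schober}).

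The hard part will be the global consistency in the second step: that the $\mathbb{Z}$-indexed family of local spherical pairs, related by tensoring with $\cO(1)$, genuinely glues --- that is, that the wall-crossing line bundle acts compatibly on the \emph{whole} spherical-pair package (the window subcategory, its adjoints and the cotwist), so that conjugation by $\cO(1)$ carries the spherical functor at $ik$ to the one at $i(k+1)$. This is exactly where the window-shift structure of~\cite{HL,BFK,HLShipman} enters, and I expect it to be the crux; by contrast, reconciling the two presentations is then the formal computation of~\cite{HLShipman}, and the schober axioms are checked locally at the vertices. One minor technicality --- that the singular points lie \emph{on} the chosen skeleton --- is handled by treating $i\mathbb{Z}$ as honest vertices carrying the standard one-marked-point disk model (or by using a comb-shaped tree with the $i\mathbb{Z}$ as leaves, or by first passing to $\mathbb{C}^{*}$).
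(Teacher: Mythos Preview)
Your proposal takes a genuinely different route from the paper, and in doing so imports structure that is neither needed nor available in the stated generality.

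The paper's argument is almost trivial once the earlier machinery is in place. It does \emph{not} glue spherical pairs along a tree with edge transitions. Instead it uses directly the family of spherical functors
\[
\sphFun^{w}\colon \D(Z/G)^{w}\longrightarrow \D(X_{+})
\]
from Corollary~\ref{corollary sph functor}: these all have the \emph{same} target, so assigning $\sphFun^{w}$ to the point $iw$ already gives a $K^{0}$-shadow in the sense of Definition~\ref{definition Kshadow} for one reference skeleton $K^{0}$. Every other finitary skeleton is $\sigma K^{0}$ for a unique $\sigma\in\operatorname{Br}_{\infty}$, so the remaining shadows are produced, and the compatibilities of Definition~\ref{schober} verified, purely formally via the transformation rule of Definition~\ref{transformation for Kshadows}. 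The local-system statement is then immediate from Proposition~\ref{proposition derived monodromy}, which gives $\twistFun^{w}\cong \Phi^{w}\circ\Phi^{w+1}$. There is no gluing step, no edge data, and no ``hard part'' of the kind you anticipate.

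By contrast, your construction hinges on a ``wall-crossing line bundle $\cO(1)$'' whose tensor action is supposed to carry the spherical pair at $ik$ to the one at $i(k+1)$, and you define $\twistFun^{w}$ as the $\cO(w)$-conjugate of $\twistFun^{0}$. In the generality of Theorem~\ref{theorem schober} --- an arbitrary simple balanced wall crossing for a toric $V/G$ --- no such line bundle is introduced, and the $\twistFun^{w}$ are \emph{defined} as twists of the distinct functors $\sphFun^{w}$, not as conjugates of a single one. (A character twist shifting $\lambda$-weight could perhaps be made to play this role, but you would have to set it up and prove the compatibility you assert; the paper avoids this entirely.) Your framework of bivalent vertices carrying spherical pairs and edges carrying transitions also does not match the schober formalism actually used here (Subsections~\ref{subsection schober multiple} and~\ref{subsection inf schober on C}), so you would in addition owe a translation back to shadows-for-skeleta. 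Finally, the claimed descent to $\mathbb{C}^{*}$ is not part of the theorem and is not asserted in the paper.
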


The simplest cases of the theorem are toric standard flops, and certain flops of orbifold projective spaces: see Example~\ref{example vgit}.

\begin{keyremark} 
The base $\mathbb{C}$ of the schober above may be thought of as the complexification of the GIT stability space~$\mathbb{R}$ associated to the wall crossing. 
\end{keyremark}

\begin{keyremark} Theorem~\ref{keytheorem schober} holds under the weaker hypothesis that $V$ is smooth, and equivariantly Calabi--Yau, in a neighbourhood of a certain subvariety appearing in the GIT stratification: see Assumption~\ref{assumption.sph_pair} for details.
\end{keyremark}

I now give an application of Theorem~\ref{keytheorem schober} to standard flops. Take a standard flopping contraction $X \to Y$ with exceptional locus $E\cong \mathbb{P}^n$, so that the normal bundle of $E$ is the sum of $n+1$~copies of~$\cO_E(-1)$, and write $\D(X | E)$ for the full subcategory of $\D(X)$ whose objects have set-theoretic support on $E$.

\begin{keyproposition}\label{keytheorem.standard_flop}\emph{(Proposition~\ref{theorem.standard_flop})}
For a standard flopping contraction as above, the schober in Theorem~\ref{keytheorem schober} induces a schober on $\mathbb{C}$, singular at $i\mathbb{Z}$, with
\begin{itemize}
\item generic fibre $\D(X | E)$, and
\item monodromy around\, $iw \in i\mathbb{Z}$ given by the spherical twist of $\cO_E(w)$.
\end{itemize}
\end{keyproposition}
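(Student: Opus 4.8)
The plan is to pass to the local model of the flop and then restrict the schober $\mathfrak S$ supplied by Theorem~\ref{keytheorem schober} to the full triangulated subcategory $\D(X|E)$ of its generic fibre $\D(X)$, checking that all of the defining data corestricts. For the reduction: a complex on $X$ with set-theoretic support in $E$ has scheme-theoretic support in a finite thickening of $E$, so $\D(X|E)$---together with the Ext groups among its objects, and hence any spherical twists acting on it---depends only on a formal neighbourhood of $E$ in $X$. By the hypothesis on the normal bundle, that neighbourhood is the local model $\mathrm{Tot}_{\mathbb P^n}\!\bigl(\cO(-1)^{\oplus(n+1)}\bigr)$, the quotient $[(\mathbb A^{n+1}\!\setminus\!0)\times\mathbb A^{n+1}]/\mathbb G_m$ for the $\mathbb G_m$-action of weights $(1^{\,n+1},(-1)^{\,n+1})$ on $V=\mathbb A^{2n+2}$, which is the simple balanced wall crossing of Example~\ref{example vgit}. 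So I may assume $X=X_+$ is this local model; then $\D(X|E)$ is literally a subcategory of the generic fibre $\D(X)$ of the schober $\mathfrak S$ produced by Theorem~\ref{keytheorem schober} (in the neighbourhood form permitted by the remark following it), and its monodromy around $iw$ is the twist $\twistFun^w$.

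Next I would make the spherical data of $\mathfrak S$ explicit for this wall crossing. The fixed stratum is $Z=\{0\}$ with Levi $\mathbb G_m$, so the Halpern-Leistner and Shipman spherical functor~\cite{HLShipman} at weight $w$ has the form $\sphFun_w\colon\D(\mathrm{pt})\to\D(X)$; unwinding its construction---which pushes the weight-$w$ line bundle forward from the proper attracting locus $E$ onto $X$---shows that it carries the generator, up to shift, to $\cO_E(w)$, whence $\twistFun^w$ is the spherical twist of $\cO_E(w)$. A short computation confirms that $\cO_E(w)$ is $(2n{+}1)$-spherical in $\D(X)$: the Koszul resolution of $\cO_E$ gives $\mathrm{Ext}^\bullet_X(\cO_E,\cO_E)=\bigoplus_j H^{\bullet-j}(E,\wedge^j N_{E/X})$, and with $N_{E/X}=\cO_E(-1)^{\oplus(n+1)}$ only the summands $j=0$ and $j=n{+}1$ survive, giving $\field\oplus\field[-(2n{+}1)]$. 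The essential observation is that $\cO_E(w)$, and hence the whole image of $\sphFun_w$, lies in $\D(X|E)$.

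I would then corestrict and reassemble. The subcategory $\D(X|E)\subseteq\D(X)$ is thick and closed under cones---hence, in the ambient enhancement, stable under the spherical twist construction---and contains every $\cO_E(w)$; since a spherical object of a triangulated category lying in such a subcategory is spherical there with twist the restriction of the ambient twist, each $\sphFun_w$ corestricts to a spherical functor $\D(\mathrm{pt})\to\D(X|E)$ whose twist is again the spherical twist of $\cO_E(w)$, the cotwist (living in $\D(\mathrm{pt})$) being untouched. Feeding the generic fibre $\D(X|E)$ together with these corestricted spherical functors into the construction underlying Theorem~\ref{keytheorem schober} produces a schober on $\mathbb C$, singular at $i\mathbb Z$, with generic fibre $\D(X|E)$ and monodromy around $iw$ the spherical twist of $\cO_E(w)$; the resulting schober is the one induced by $\mathfrak S$, via the inclusion $\D(X|E)\hookrightarrow\D(X)$ of generic fibres.

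The main obstacle is the identification of the spherical data in the second step: one must verify not just that the autoequivalence $\twistFun^w$ agrees with the twist of $\cO_E(w)$, but that the Halpern-Leistner and Shipman spherical functor is itself, up to shift, $\field\mapsto\cO_E(w)$, so that it manifestly factors through $\D(X|E)$. The cleanest route is to compute directly on the $X_+$-side: $\sphFun_w$ is by construction the pushforward along $E\hookrightarrow X$ of a line bundle pulled back along $E\to B\mathbb G_m$, and that line bundle is $\cO_E(w)$ because $E=\mathbb P^n$ is the quotient of $\mathbb A^{n+1}\!\setminus\!0$ by the weight-one $\mathbb G_m$-action, so a weight-$w$ character pulls back to $\cO_E(w)$. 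A secondary and, I expect, purely formal point is to check that the gluing data of $\mathfrak S$---assembled from the very same functors $\sphFun_w$---corestricts compatibly.
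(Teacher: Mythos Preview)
Your proposal is correct and follows essentially the same route as the paper: reduce to the toric local model $V=U\oplus U^\vee$ with scalar $\mathbb G_m$-action, identify the Halpern-Leistner--Shipman spherical functor $\sphFun^w$ with $\field\mapsto\cO_E(w)$ using $Z=\{0\}$ and Corollary~\ref{corollary sph functor}, observe that the resulting twists preserve $\D(X_+|E)$, and transfer to a general $X$ via the formal neighbourhood of $E$. The only cosmetic difference is ordering---the paper builds the local schober first and then invokes $\D(X|E)\cong\D(X_+|E)$ via $\D(\widehat X)\cong\D(\widehat X_+)$, whereas you reduce at the outset---and the paper omits your explicit $(2n{+}1)$-sphericality check and corestriction discussion, which are in any case implicit in the spherical functor already supplied by Corollary~\ref{corollary sph functor}.
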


Restricting finally to standard $3$-fold flops, I construct and study a schober on a Riemann surface with non-trivial topology, namely \[M = \mathbb{P}^1 - \{\text{$3$ points}\}.\]
I understand this as a stringy K\"ahler moduli space~(SKMS), following a standard heuristic from mirror symmetry: see  Subsection~\ref{sect discussion} for discussion. Let~$\overline{M}$ be the partial compactification of $M$ with a point~$p$ replaced, so that \[\overline{M} = \mathbb{P}^1 - \{\text{$2$ points}\}.\] Writing $X'$ for the flop of~$X$, with exceptional locus $E'$, I prove the following.

\begin{keytheorem}\emph{(Theorem~\ref{theorem schober SKMS})}\label{keytheorem schober SKMS} For a standard $3$-fold flopping contraction as above, there exists a schober on $\overline{M}$, singular at~$p$, with
\begin{itemize}
\item generic fibres $\D(X | E)$ and $\D(X' | E')$, and
\item an induced local system of categories on $M$ given by
\begin{center}
\begin{tikzpicture}
 \node at (0,0) {$\monodromiesSKMS{$\Dres{X}{E}$}{$\Dres{X'}{E'}$}{$(1)$}{$(1)$}{$\flop$}{1}{1}{$p$}$};
\end{tikzpicture}
\end{center}
with Bondal--Orlov flop functors~ $\flop$, and line bundle twists~$(1)$.
\end{itemize}

\end{keytheorem}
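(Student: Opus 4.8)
The plan is to produce the schober on $\overline{M}$ by \emph{descending} the schober of Proposition~\ref{keytheorem.standard_flop} along a covering map, once that schober has been equipped with an equivariance special to the standard flop. Specialising Proposition~\ref{keytheorem.standard_flop} to a $3$-fold flopping contraction (so $E\cong\mathbb{P}^1$ with normal bundle $\cO_E(-1)^{\oplus 2}$) gives a schober $\mathfrak{S}$ on $\mathbb{C}$, singular along $i\mathbb{Z}$, with generic fibre $\Dres{X}{E}$ and monodromy around $iw$ the spherical twist $\twistFun_w:=\twistFun_{\cO_E(w)}$. Along the two half-planes $\{\mathrm{Re}\,z\gtrless 0\}$ this generic fibre already carries its two GIT-chamber descriptions $\D(X_+)=\D(X)$ and $\D(X_-)=\D(X')$, identified by the window equivalence of Theorem~\ref{keytheorem schober} as in the second picture there; restricting to $E$-supported objects these become $\Dres{X}{E}$ and $\Dres{X'}{E'}$.

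Next I would install a $\mathbb{Z}$-equivariant structure on $\mathfrak{S}$ for the translation $z\mapsto z+i$. Tensoring by the line bundle $\cO_X(1)\in\mathrm{Pic}(X)$ with $\cO_X(1)|_E\cong\cO_E(1)$ gives an autoequivalence $(1)$ of $\Dres{X}{E}$, and since $\Psi\compose\twistFun_F\compose\Psi^{-1}\cong\twistFun_{\Psi F}$ for any autoequivalence $\Psi$, conjugation by $(1)$ sends $\twistFun_w$ to $\twistFun_{w+1}$ --- exactly the way $z\mapsto z+i$ moves the singular points and their monodromies. This coherence of $(1)$ with the monodromy data is what promotes $\mathfrak{S}$ to a $\mathbb{Z}$-equivariant schober, which then descends along $\mathbb{C}\to\mathbb{C}/i\mathbb{Z}$. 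Identifying $\mathbb{C}/i\mathbb{Z}\cong\mathbb{C}^*\cong\mathbb{P}^1-\{2\text{ points}\}=\overline{M}$ via $z\mapsto e^{2\pi z}$, the set $i\mathbb{Z}$ collapses to the single point $1=:p$, so the descended schober is singular only at $p$; the two half-planes descend to neighbourhoods of the two ends of $\overline{M}$, carrying the two descriptions $\Dres{X}{E}$ and $\Dres{X'}{E'}$. This yields the asserted generic fibres.

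It then remains to read off the induced local system on $M=\overline{M}\setminus\{p\}=\mathbb{P}^1-\{3\text{ points}\}$ by lifting loops and paths to the cover $\mathbb{C}\setminus i\mathbb{Z}$. A small loop about either end of $\overline{M}$ lifts to the deck translation $z\mapsto z+i$ confined to a half-plane, hence has monodromy the line bundle twist $(1)$, on $\Dres{X}{E}$ respectively $\Dres{X'}{E'}$. A path from the $X$-end to the $X'$-end crossing the imaginary axis strictly between two consecutive points of $i\mathbb{Z}$ lifts to a path crossing between the two half-planes, so its transport is the window equivalence restricted to $E$-supported objects, which for the standard $3$-fold flop is the Bondal--Orlov flop functor $\flop$ (as established in the body, by comparison of Fourier--Mukai kernels). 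There are two homotopy classes of such path, above and below $p$, realising $\flop_{X\to X'}$ and $\flop_{X'\to X}$; their composite is a loop about $p$, whose monodromy is $\twistFun_{\cO_{E'}}$ by Proposition~\ref{keytheorem.standard_flop}, consistently with the ``flop--flop is an inverse spherical twist'' identity for the Atiyah flop, which also delivers the relation among the three monodromies imposed by $\pi_1(M)$. Assembling these reproduces the picture in the statement.

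The main obstacle is the descent step. One must check that not merely the local system of categories but the full schober structure --- the spherical functor at the singular point together with its gluing to the nearby local system --- descends along $\mathbb{C}\to\mathbb{C}^*$; equivalently, that the autoequivalence $(1)$ is compatible with all of this data in a coherent way. Because the data near each point of $i\mathbb{Z}$ is controlled by $\twistFun_w$, this reduces to the commutation $(1)\compose\twistFun_w\cong\twistFun_{w+1}\compose(1)$ noted above together with its higher coherences; but packaging this correctly in whatever model for schobers on surfaces is in force (a skeleton, or a constructible gluing description) is the technical heart of the argument. A secondary point needing care is matching sign and shift conventions in the identification of the restricted window equivalence with $\flop$ and in the $\pi_1(M)$-relation.
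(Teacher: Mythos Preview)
Your route is genuinely different from the paper's. The paper does not descend the schober on $(\mathbb{C},i\mathbb{Z})$; it builds the object on $\overline{M}$ directly. Concretely, it first writes down the local system $\cQ^\circ$ on $M$ with the flop functors $\flop$ as half-monodromies and the line bundle twists $\otimes\,\cL_\pm$ as the loops at the two ends, and checks that this is well-defined by verifying that the monodromy at infinity
\[
\flop^{-1}\circ(\,\placeholder\otimes\cL_-\,)\circ\flop^{-1}\circ(\,\placeholder\otimes\cL_+\,)
\]
is trivial; this reduces to the identity $\flop^{-1}\cong(\,\placeholder\otimes\cL_+^\vee\,)\circ\flop\circ(\,\placeholder\otimes\cL_-^\vee\,)$, proved via the birational roof and the projection formula, under an explicit assumption on the line bundles $\cL_\pm$. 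It then extends $\cQ^\circ$ over $p$ using the general extension criterion (Proposition~\ref{proposition schober pair extend}): one needs a spherical pair on a small disk inducing the same local system on its boundary, and this is supplied by a separately-constructed spherical pair $\cS_0$ with pieces $\Dres{X_\pm}{E_\pm}$ and $\D(\mathrm{pt})$. The relation to the schober on $\mathbb{C}$ is established afterwards and only at the level of local systems, by showing that $g^*\cQ^\circ$ is a refinement of $\cP^\circ$.

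Your descent strategy is appealing because it would make that last compatibility automatic rather than a separate check. But the obstacle you flag is more serious than ``packaging'': the paper's definitions give no operation that takes an equivariant schober with \emph{infinitely many} singular points and produces one with a \emph{single} singular point on the quotient. The data near $p$ in the paper's sense is either a spherical functor or a spherical pair on a disk, not a $\mathbb{Z}$-orbit of spherical functors with intertwiners; you would need to develop that descent (and in particular produce the spherical pair $\cS_0$, which carries more than the spherical-functor data you get from any single $\sphFun^w$). By contrast, the paper's direct approach needs only one relation among the functors on $M$ and one off-the-shelf spherical pair.
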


I furthermore show that, in an appropriate sense, the pullback of the above local system on $M$ to a covering space isomorphic to $\mathbb{C}-i\mathbb{Z}$ is given by the schober from Proposition~\ref{keytheorem.standard_flop}: see Proposition~\ref{proposition schober SKMS}. I give a criterion to extend the  schober of Theorem~\ref{keytheorem schober SKMS} from $\overline{M}$ to $\mathbb{P}^1$ in a final Subsection~\ref{subsection compactif}.

\begin{keyremark} Work of Halpern-Leistner and Sam~\cite{HLSam} 
gives a local system of categories on an SKMS for an extensive class of examples, namely quotients for quasi-symmetric representations of reductive groups: it would be interesting to study schobers extending these.
\end{keyremark}

\subsection{Mirror symmetry of schobers}\label{sect MS} I outline a possible homological mirror symmetry~(HMS) application for Theorem~\ref{keytheorem schober SKMS}. Given a standard $3$-fold~flop between~$X$ and~$X'$, HMS may take the form of equivalences \[\D(X|E) \cong \F(Y) \qquad \text{and} \qquad \D(X'|E') \cong \F(Y'),\]  where $\F$ is an appropriate Fukaya category, and $Y$ and $Y'$ are mirror symplectic manifolds.

\begin{keyremark} Taking $X$ and $Y$ to be the resolved and deformed conifold respectively, with certain loci removed, an equivalence as above is proved by Chan, Pomerleano, and~Ueda~\cite{CPU}.
\end{keyremark}

Under mirror symmetry the SKMS~$M$, denoted by $\MB$ below, should be identified with a complex structure moduli space $\MA$, which is the base of a mirror family with $Y$ and $Y'$ as fibres. We may further identify  partial compactifications $\cMB \cong \cMA$. Now assume given an extension of the mirror family on  $\MA$ to $\cMA$, allowing singularities over the boundary, and denote this symplectic fibration by
\[f: \mathcal{Y} \to \cMA.\]
Letting  $\QB$ be the schober on $\cMB$ from Theorem~\ref{keytheorem schober SKMS}, we have the following.
\begin{question}Can a schober $\QA$ on $\cMA$ be constructed from the extended mirror family~$f$ such that there is an equivalence of schobers \[\QB \cong \QA\,?\]
\end{question}
The schober $\QA$ should have generic fibres $\F(Y)$ and $\F(Y')$ by construction, and the equivalence of schobers should respect the identification $\cMB \cong \cMA$, and the HMS equivalences above. I hope this question will be addressed in future work, along with more general examples.

\begin{keyremark}  The mirror operation to the standard $3$-fold flop given by Fan, Hong, Lau, and~Yau~\cite{FHLY} should induce half-monodromy equivalences for the schober~$\QA$ proposed above.\end{keyremark}

\subsection{Outline} In Section~\ref{section schober disk}, I review definitions of schobers on a disk, due to \KandS{}; Section~\ref{subsection.loc_sys} discusses local systems of categories and operations on them. Section~\ref{section schobers riemann} uses these notions to define schobers on Riemann surfaces, following ideas of~\KS{}, and gives criteria for extending these objects from an open subset. In~Section~\ref{section GIT}, I~review my previous work~\cite{Don} constructing spherical pairs from GIT wall crossings. Section~\ref{section examples plane} constructs schobers on $\mathbb{C}$, singular at~$i\mathbb{Z}$, proving Theorem~\ref{keytheorem schober} and Proposition~\ref{keytheorem.standard_flop}. Finally, in Section~\ref{section SKMS}, I prove results for the SKMS discussed above, in particular Theorem~\ref{keytheorem schober SKMS}.

\begin{acknowledgements}I am grateful to M.~Kapranov for inspiring conversations. I thank A.~Bondal, Y.~Ito, A.~King, S.~Meinhardt, E.~Segal, and M.~Wemyss for useful discussions, and J.~Stoppa and B.~Fantechi for their hospitality and interest in my work at SISSA, Trieste. I am grateful to an anonymous referee, and to P.~Schapira, for helpful comments. Finally, I thank the organizers of the 2016 Easter Island workshop on algebraic geometry for the opportunity to attend.\end{acknowledgements}

\begin{conventions}Varieties $X$ are assumed quasiprojective, with bounded derived category of coherent sheaves denoted by $\D(X)$. For a subvariety $Z$ of $X$, write $\Dres{X}{Z}$ for the full subcategory of $\D(X)$ consisting of objects with (set-theoretic) support on $Z$.
\end{conventions}

\section{Schobers on a disk}
\label{section schober disk}

In this section I recall categorifications, due to \KS{}, of the data of a perverse sheaf on a disk with singular points. For the case of a single singular point we have two categorifications, namely a `spherical pair' and a `spherical functor'. I explain how spherical pairs yield spherical functors. 

\subsection{Perverse sheaves}
\label{subsection.perv_sheaves}

The following is a standard description of the category $\operatorname{Per} (\Delta,B)$ of perverse sheaves on a disk $\Delta \subset \mathbb{C},$ possibly singular at points $B=\{b_1, \dots, b_n\}$. 

\begin{proposition}\label{proposition.GMV_perverse_description} \cite[Proposition~1.2]{GMV} The category $\operatorname{Per} (\Delta,B)$ is equivalent to the category of diagrams of vector spaces $D$, $D_1$, \dots, $D_n$ as follows, such that each endomorphism $m_i = \Idmatrix - v_i u_i $ of $D$ is an isomorphism.
\[
\begin{tikzpicture}
	\node (one) at (0,1) {$D_n$};
	\node (zero) at (0,0.1) {$\vdots$};
	\node (minusone) at (0,-1) {$D_1$};
	\node (plus) at (2,0) {$D$};
	\draw[->,transform canvas={xshift=+\arrowsplit,yshift=+\arrowsplit}] (plus) to  node[above] {$\scriptstyle u_n $} (one);
	\draw[<-,transform canvas={yshift=-\arrowsplit}] (plus) to  node[below] {$\scriptstyle v_n $} (one);
	\draw[->,transform canvas={xshift=-\arrowsplit,yshift=+\arrowsplit}] (plus) to  node[above] {$\scriptstyle u_1 $} (minusone);
	\draw[<-,transform canvas={yshift=-\arrowsplit}] (plus) to  node[below] {$\scriptstyle v_1 $} (minusone);
\end{tikzpicture}
\]
\end{proposition}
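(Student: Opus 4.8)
The plan is to exhibit mutually quasi-inverse functors between $\operatorname{Per}(\Delta,B)$ and the category of star-shaped diagrams, reducing the multi-point statement to the classical single-singular-point case plus a descent argument that is unobstructed because $\Delta$ is contractible and $\pi_1(\Delta\setminus B)$ is free. For the forward functor, start from a perverse sheaf $\mathcal{F}$. On the smooth locus $U=\Delta\setminus B$ the perversity conditions force $\mathcal{F}|_U$ to be a shifted local system $\mathcal{L}[1]$, and one sets $D$ to be the stalk of $\mathcal{L}$ at a fixed base point $\ast\in\partial\Delta$. Choosing a local coordinate at each $b_i$ and a path from $\ast$, one gets the nearby and vanishing cycle spaces $\psi_{b_i}\mathcal{F}\cong D$ and $\phi_{b_i}\mathcal{F}=:D_i$ together with the canonical and variation maps; put $u_i:=\mathrm{can}_i:D\to D_i$ and $v_i:=-\mathrm{var}_i:D_i\to D$, so that $m_i=\Idmatrix-v_iu_i=\Idmatrix+\mathrm{var}_i\circ\mathrm{can}_i$ is exactly the local monodromy $T_i$ of $\mathcal{L}$ around $b_i$, via the Deligne relation $\mathrm{var}_i\circ\mathrm{can}_i=T_i-\Idmatrix$; in particular $m_i$ is invertible. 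This uses that on a curve the stalk of a perverse sheaf on the smooth locus, and the spaces $\psi_{b_i},\phi_{b_i}$, are each concentrated in a single cohomological degree.

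For the quasi-inverse, begin with a diagram $(D,D_1,\dots,D_n,u_i,v_i)$ with each $m_i:=\Idmatrix-v_iu_i$ invertible. Since $\pi_1(U)$ is free on small loops $\gamma_1,\dots,\gamma_n$, there is a local system $\mathcal{L}$ on $U$ with fibre $D$ and monodromy $m_i$ around $\gamma_i$, with no cocycle condition to impose. Around each $b_i$ choose a small disk $\Delta_i\ni b_i$ with the $\overline{\Delta_i}$ pairwise disjoint and $\Delta_i\setminus\{b_i\}\subset U$. By the one-singular-point description (Deligne--Beilinson), the data $(\psi=D,\ \phi=D_i,\ \mathrm{can}=u_i,\ \mathrm{var}=-v_i)$ determines a perverse sheaf $\mathcal{F}_i$ on $(\Delta_i,\{b_i\})$ restricting to $\mathcal{L}[1]|_{\Delta_i\setminus\{b_i\}}$. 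Because perverse sheaves satisfy descent and the open cover $\{U,\Delta_1,\dots,\Delta_n\}$ has all double overlaps $\Delta_i\cap U=\Delta_i\setminus\{b_i\}$ contained in $U$ and all triple overlaps empty ($\Delta_i\cap\Delta_j=\varnothing$ for $i\neq j$), the sheaves $\mathcal{L}[1]$ on $U$ and $\mathcal{F}_i$ on $\Delta_i$, which agree on overlaps by construction, glue to a perverse sheaf $\mathcal{F}$ on $\Delta$ without obstruction.

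That the two functors are mutually quasi-inverse follows once more from descent: any perverse sheaf on $\Delta$ is recovered from its restrictions to $U$ and to the $\Delta_i$, and the restriction to $\Delta_i$ is recovered from its nearby/vanishing data by the single-point equivalence; naturality is routine bookkeeping. Functoriality in both directions, and fully-faithfulness, reduce to the same two inputs. The one genuinely delicate point is the single-singular-point case itself — that nearby and vanishing cycles, with the relation $\mathrm{var}\circ\mathrm{can}=T-\Idmatrix$, form a complete and faithful set of invariants for perverse sheaves on a punctured disk — which I would either cite as classical (Deligne's gluing, or Beilinson's unipotent nearby cycles) or, following the more combinatorial route of \cite{GMV}, sidestep: cut $\Delta$ along disjoint embedded arcs $\ell_i$ from $\partial\Delta$ to $b_i$, so that on the contractible region $\Delta\setminus\bigcup_i\ell_i$ the sheaf is constant with value $D$, and then assemble $\mathcal{F}$ explicitly from $D$, the $D_i$, and the maps $u_i$ (gluing across $\ell_i$) and $v_i$.

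The main obstacle, then, is organizing the single-point input and the descent step so they interlock cleanly; everything multi-point is purely the bookkeeping of a free fundamental group, which carries no compatibility conditions. It is worth emphasising in the write-up that the invertibility of $m_i$ is \emph{automatic} on the perverse side, where $m_i=T_i$ is a genuine monodromy automorphism, and is a real constraint only on the abstract diagram side — which is precisely why it is imposed by hand in the statement.
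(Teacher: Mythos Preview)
Your argument is correct. The reduction to the single-point Beilinson--Deligne description via $(\psi,\phi,\mathrm{can},\mathrm{var})$, together with unobstructed gluing over the cover $\{U,\Delta_1,\dots,\Delta_n\}$, is a clean and standard way to establish the equivalence; the freeness of $\pi_1(\Delta\setminus B)$ is exactly what kills any compatibility conditions, as you say.

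It is worth noting, though, that the paper does not itself prove this proposition: it is cited from \cite{GMV}, and the paper only indicates how the forward functor is built, and by a different mechanism. Rather than nearby and vanishing cycles, the paper fixes a skeleton $K$ (a tree of arcs from a boundary basepoint $x$ to each $b_i$) and takes $D$ and the $D_i$ to be the stalks at $x$ and at $b_i$ of the hypercohomology sheaf $\mathbb{H}^1_K(P)$ with support on $K$; the maps $v_i$ are then generalization maps in the sense of \cite{KS1}, and the $u_i$ their duals. This skeleton-based description is what the rest of the paper actually uses --- it makes the dependence on the choice of $K$, and hence the braid group action of Definition~\ref{transformation for Kshadows}, transparent. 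Your nearby/vanishing-cycle approach gives an equivalent answer (your paths from $\ast$ to each $b_i$ are playing the role of the skeleton, and your $D_i=\phi_{b_i}$ matches the stalk of $\mathbb{H}^1_K$ at $b_i$ up to the usual identifications), but the translation is not entirely formal, and if you are going to continue into the schober material you will want the combinatorial $\mathbb{H}^1_K$ picture anyway. The final paragraph of your proposal, where you cut along arcs $\ell_i$, is in fact closer to the paper's viewpoint than the body of your argument.
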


An equivalence may be constructed as follows. Fix~$x \in \partial\Delta,$ and choose a skeleton~$K$ as in Figure~\ref{figure abs per sheaf eg}. Then for $P$ in $\operatorname{Per} (\Delta,B)$ take the $D_i$ and $D$ to be the stalks at~$b_i$ and~$x$ respectively of the sheaf $\mathbb{H}^1_K(P)$ of cohomology with support on $K$. The $v_i$ are generalization maps, as described for instance in \cite[Section~1D]{KS1}, and the $u_i$ are dual to them. Other equivalences may be obtained by choosing a different skeleton, for instance~$K'$ in Figure~\ref{figure abs per sheaf eg}.

\begin{figure}[h]
\begin{center}
\begin{tikzpicture}
 \node at (0,0) {$\skeletonForDisk{b}{$x$}{$K$}$};
  \node at (6,0) {$\skeletonForDiskTwisted{b}{$x$}{$K'$}$};
\end{tikzpicture}
\end{center}
\caption{Skeleton examples.}\label{figure abs per sheaf eg}
\end{figure}

\begin{notation}\label{notation skeleta} Write $\mathcal{C}$ for the set of isotopy classes of skeleta for $(\Delta,B)$: here a~\emph{skeleton} is a union of simple arcs joining $x$ to the~$b_i$, coinciding near~$x$. \end{notation}

\begin{remark} The $n$-stranded braid group $\operatorname{Br}_n$ may be presented  as the isotopy classes of orientation-preserving diffeomorphisms of~$\Delta$ which fix $x$ and preserve $B$, giving a simply transitive action of~$\operatorname{Br}_n$ on~$\mathcal{C}$.

Under this description, we may choose generators $\sigma_i$ for $i=1 \dots n-1$ by choosing diffeomorphisms of~$\Delta$ which act on $B$ by swapping $b_i$ and $b_{i+1}$, and satisfy Artin braid relations
\begin{equation*}
\sigma_i \sigma_{i+1} \sigma_i  = \sigma_{i+1} \sigma_i \sigma_{i+1}.
\end{equation*}
For instance, in Figure~\ref{figure abs per sheaf eg}, for each $i$ take a disk $\delta$ with a diameter the line segment from~$b_i$ to~$b_{i+1}$, and a slightly larger disk $\delta_\epsilon$ with the same centre. Then a suitable diffeomorphism $\sigma_i$ may be constructed which acts on $\delta$ by a half turn in the counterclockwise direction, and as the identity on $\Delta - \delta_\epsilon$, so that, in particular, $\sigma_{n-1}$ sends $K$ to the isotopy class of $K'$.
\end{remark}

In the special case $B = \{ b \},$ we have the following alternative description due to \KS{}.

\begin{proposition}\label{proposition.KS_perverse_description} \cite[Section~9]{KS1} The category $\operatorname{Per} (\Delta,\{b\})$ is equivalent to the category of diagrams of vector spaces
\[
\begin{tikzpicture}
	\node (zero) at (0,0) {$E_0$};
	\node (plus) at (2,0) {$E_+$};
	\node (minus) at (-2,0) {$E_-$};
	\draw[->,transform canvas={yshift=+\arrowsplit}] (minus) to  node[above] {$\scriptstyle u_- $} (zero);
	\draw[<-,transform canvas={yshift=-\arrowsplit}] (minus) to  node[below] {$\scriptstyle v_- $} (zero);
	\draw[->,transform canvas={yshift=+\arrowsplit}] (plus) to  node[above] {$\scriptstyle u_+ $} (zero);
	\draw[<-,transform canvas={yshift=-\arrowsplit}] (plus) to  node[below] {$\scriptstyle v_+ $} (zero);
\end{tikzpicture}
\]
such that
\begin{enumerate}
\item\label{proposition.KS_perverse_description 1} $v_\pm  u_\pm = \Idmatrix$, and
\item\label{proposition.KS_perverse_description 2} $v_+  u_-$ and $v_-  u_+$ are isomorphisms.
\end{enumerate}

\end{proposition}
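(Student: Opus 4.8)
The plan is to derive this from Proposition~\ref{proposition.GMV_perverse_description}, by an essentially algebraic comparison of diagram categories. Specialising Proposition~\ref{proposition.GMV_perverse_description} to $B=\{b\}$ presents $\operatorname{Per}(\Delta,\{b\})$ as the category $\cA$ of diagrams consisting of vector spaces $D,D_1$ with maps $u_1\colon D\to D_1$ and $v_1\colon D_1\to D$ such that $m_1=\Idmatrix-v_1u_1$ is an automorphism of $D$; write $\cB$ for the category in the statement, of diagrams $(E_-,E_0,E_+,u_\pm,v_\pm)$ satisfying conditions~\ref{proposition.KS_perverse_description 1} and~\ref{proposition.KS_perverse_description 2}. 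I would construct mutually quasi-inverse functors $F\colon\cA\to\cB$ and $G\colon\cB\to\cA$; composing $F$ with the equivalence of Proposition~\ref{proposition.GMV_perverse_description} then gives the result. (Geometrically, this equivalence just records a change of figure for $(\Delta,\{b\})$, from a single arc joining $x$ to $b$ to the configuration underlying the $E_\pm$-picture of \cite[Section~9]{KS1}; but with Proposition~\ref{proposition.GMV_perverse_description} in hand it is cleanest to argue directly.)

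For $F$, I would send $(D,D_1,u_1,v_1)$ to the diagram with $E_+=E_-=D$ and $E_0=D\oplus D_1$, with structure maps $u_+(d)=(d,u_1d)$, $v_+(d,d_1)=d$, $u_-(d)=(d,0)$ and $v_-(d,d_1)=d-v_1d_1$, acting on morphisms in the evident way. One checks $v_\pm u_\pm=\Idmatrix_D$ by inspection, $v_+u_-=\Idmatrix_D$, and $v_-u_+=\Idmatrix_D-v_1u_1=m_1$, invertible by hypothesis, so $F$ indeed lands in $\cB$. For $G$, condition~\ref{proposition.KS_perverse_description 1} for $E_+$ supplies a direct sum decomposition $E_0=\operatorname{im}(u_+)\oplus\Ker(v_+)$; I would put $D=E_+$ and $D_1=\Ker(v_+)$, and read $u_1$ and $v_1$ off from the components of $u_-$ and $v_-$ relative to this decomposition, so that $m_1$ is recovered (up to sign) as $v_-u_+$, which is invertible by condition~\ref{proposition.KS_perverse_description 2}. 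Modulo Proposition~\ref{proposition.GMV_perverse_description}, everything then reduces to routine linear algebra: functoriality of $F$ and $G$, and the isomorphism $GF\cong\Idmatrix_{\cA}$, are immediate diagram chases.

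The only step that needs a little care is the natural isomorphism $FG\cong\Idmatrix_{\cB}$, i.e.\ checking that an arbitrary object of $\cB$ is \emph{naturally} isomorphic to one in the standard form produced by $F$, with the comparison isomorphism compatible with all four maps $u_\pm,v_\pm$ simultaneously. This is exactly where I would invoke condition~\ref{proposition.KS_perverse_description 1} for $E_-$ together with invertibility of $v_+u_-$ from condition~\ref{proposition.KS_perverse_description 2}, to identify $E_-$ with $E_+=D$ consistently with the rest of the data; and a small amount of attention to sign conventions is needed so that $m_1=\Idmatrix-v_1u_1$, rather than $\Idmatrix+v_1u_1$, is the quantity forced to be invertible. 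Should the bookkeeping prove unpleasant, one can instead quote the construction of \cite[Section~9]{KS1} directly, taking $E_0$ to be the generic stalk of $P$ and $E_\pm$ its two one-sided stalks along a transversal arc through~$b$.
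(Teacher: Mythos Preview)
Your algebraic reduction to Proposition~\ref{proposition.GMV_perverse_description} is sound and does establish the equivalence, but it is a genuinely different route from the paper. The paper does not prove the statement at all: it cites \cite[Section~9]{KS1} and then sketches the geometric construction directly, using a second skeleton $L$ for $(\Delta,\{b\})$ with \emph{two} boundary basepoints $x_\pm$ (Figure~\ref{figure sph pair skeleton}), and taking $E_\pm$, $E_0$ to be the stalks of $\mathbb{H}^1_L(P)$ at $x_\pm$ and at $b$ respectively, with $v_\pm$ the generalization maps. So where you pass through $\cA$ and build $\cB$ by hand, the paper produces $\cB$ from the perverse sheaf in one step via a different stratification figure.

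Your approach has the virtue of being self-contained once Proposition~\ref{proposition.GMV_perverse_description} is in hand, and it makes transparent that the two quiver descriptions are related by a purely linear-algebraic change of coordinates. The paper's approach is more economical and, importantly for what follows, explains the geometric meaning of the $E_\pm$ and $E_0$ in a way that motivates the spherical-pair Definition~\ref{definition sph pair}. Two small comments on your write-up: in your description of $G$, the map $u_1$ is really read off from $u_+$ (after using $u_-$ to split $E_0$), not from $u_-$ as written; and your fallback geometric description at the end --- $E_0$ as ``generic stalk'', $E_\pm$ as ``one-sided stalks along a transversal arc'' --- is a different (nearby/vanishing-cycles style) picture from the one the paper actually uses, where all three spaces are stalks of the \emph{same} hypercohomology sheaf $\mathbb{H}^1_L(P)$ along the skeleton $L$.
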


An equivalence is obtained as follows. Fix~$x_\pm \in \partial\Delta,$ and choose a skeleton~$L$ as in Figure~\ref{figure sph pair skeleton}. Then for $P$ in $\operatorname{Per} (\Delta,0)$ take $E_{\pm}$~and~$E_0$ to be the stalks at~$x_\pm$ and~$b$ respectively of the sheaf $\mathbb{H}^1_K(P)$. Again, the maps $v_\pm$ are generalization maps, and the $u_\pm$ dual to them.

\begin{figure}[h]
\begin{center}
\begin{tikzpicture}
 \node at (0,0) {$\skeletonForPair{$x_-$}{$x_+$}{$L$}{$b$}$};
\end{tikzpicture}
\end{center}
\caption{Spherical pair skeleton.}\label{figure sph pair skeleton}
\end{figure}

\subsection{Schobers with multiple singular points}
\label{subsection schober multiple}

This subsection explains a categorification, due to~\KS{}, of the data of a perverse sheaf~$P$ on $\Delta$, possibly singular on some finite set of points $B$. I extend this to contractible domains in $\mathbb{C}$ with certain countable singular~sets in Subsection~\ref{subsection inf schober on C}.

The following gives a categorification of the vector space data associated to the perverse sheaf from Proposition~\ref{proposition.GMV_perverse_description}.

\begin{definition}\label{definition Kshadow}\cite[Section~2B]{KS2} A \defined{shadow} of a schober on~$(\Delta,B)$ is a collection of triangulated categories $\mathcal{D}$, $\mathcal{D}_1$, \dots, $\mathcal{D}_n$ with spherical functors \[\sphFun_i\colon \mathcal{D}_i \to \mathcal{D}.\]
\end{definition}

Each spherical functor $\sphFun$ induces a twist symmetry of $\mathcal{D}$ as follows
\begin{equation}\label{equation twist def}\twistFun_\sphFun := \operatorname{\sf Cone}\big(\,\sphFun \compose \sphFun^{\Radj} \xrightarrow{\text{\,{counit}\,\,}} \Id\big)\end{equation}
where $\sphFun^{\Radj}$ denotes a right adjoint. Here we assume given enhancements of our triangulated categories, so that functorial cones make sense, as in for instance~\cite[Appendix~A]{KS2}. For brevity, we have the following.

\begin{notation} Write $\twistFun_i$ for the twist symmetry $\twistFun_{\sphFun_i}$.\end{notation}

The vector space data from Proposition~\ref{proposition.GMV_perverse_description} depends on a choice of isotopy class of skeletons. Write~$K$ for such a class. To categorify a perverse sheaf we will therefore take, in Definition~\ref{schober}, a shadow $\cP_K$ for each class~$K$, satisfying appropriate compatibilities. For brevity, we refer to such a $\cP_K$ as a \defined{$K$-shadow}. The following Definition~\ref{transformation for Kshadows} gives operations relating $K$-shadows with $K'$-shadows for a different isotopy class $K' = \sigma K$, where $\sigma \in \operatorname{Br}_n$.

\begin{definition}\label{transformation for Kshadows}
For a $K$-shadow $\cP_K$, define a ${\sigma_i} K$-shadow $f_{\sigma_i} \cP_K$ by:
\begin{align*}
\sphFun'_{i}  & = \sphFun_{i+1}, \\
\sphFun'_{i+1}  & = \twistFun^{-1}_{i+1} \circ \sphFun_i, \\
\sphFun'_j & = \sphFun_j \text{\, for \,}j\neq i,i+1.
\end{align*}
For general $\sigma \in \operatorname{Br}_n$, define $f_{\sigma}$ as a composition of $f_{\sigma_i}$ and their inverses.
\end{definition}

 \begin{remark} The operations $f_{\sigma_i}$ are modelled on the transformation rules for vector space data from Proposition~\ref{proposition.GMV_perverse_description} under generators $\sigma_i$ of $\operatorname{Br}_n$, as given for instance in \cite[Proposition~1.3, and Remark below]{GMV}. In particular, the following braid relation may be easily checked for $i=1 \dots n-1$.
 \[ f_{\sigma_i} f_{\sigma_{i+1}} f_{\sigma_i} \cP_K \cong f_{\sigma_{i+1}} f_{\sigma_i} f_{\sigma_{i+1}} \cP_K \]
The key here is that, for a spherical functor $\sphFun\colon \mathcal{C} \to \mathcal{D}$ and a general equivalence $\Psi\colon \mathcal{D} \to \mathcal{E}$, we have
 \[ \twistFun_{\Psi \circ \sphFun} \cong \Psi \circ \twistFun_{\sphFun} \circ \Psi^{-1}.\]
\end{remark}

We then make the following definition.

\begin{definition}\label{schober}\cite[Section~2B]{KS2} A \defined{schober} $\cP$ on $(\Delta,B)$ is the data of a shadow~$\cP_K$ for each isotopy class~$K$ of skeletons such that there are identifications \[g_{K,\sigma} : f_\sigma \cP_K \overset{\sim}{\longrightarrow} \cP_{\sigma K}\]
for each $\sigma \in \operatorname{Br}_n$, compatible in the natural way.
\end{definition}

\begin{remark} The compatibility above is that, for $\tau, \sigma \in \operatorname{Br_n}$, composing 
\[f_\tau(g_{K,\sigma}) : f_\tau f_\sigma \cP_K
\longrightarrow f_\tau \cP_{\sigma K} \qquad\text{and}\qquad g_{\sigma K,\tau} : f_\tau \cP_{\sigma K}
\longrightarrow \cP_{\tau(\sigma K)}\] 
should coincide up to isomorphisms with
$ g_{K,\tau \sigma} : f_{\tau \sigma} \cP_K
\longrightarrow \cP_{(\tau\sigma)K}.$
\end{remark}

\begin{remark} Spherical functors are exactly schobers on $(\Delta,\{b\})$, as there is a unique isotopy class $K$ of skeletons in this case.\end{remark}

\subsection{Spherical pairs}\label{sect sph pair} This subsection presents a  categorification, again due to~\KS{}, of the data of a perverse sheaf~$P$ on~$\Delta$, possibly singular at a point $b$. This is a categorification of the vector space data of Proposition~\ref{proposition.KS_perverse_description}. It gives an alternate categorification for the case $B=\{b\}$ of the previous Subsection~\ref{subsection schober multiple}.

Recall that a semi-orthogonal decomposition
\begin{equation*}
\cD = \big\langle \cD_1, \cD_2 \big\rangle,
\end{equation*}
 is determined by embeddings $\delta_i \colon \cD_i \to \cD$ and induces projection functors
 \[\delta_1^{\Ladj}\colon \cD \longrightarrow \cD_1 \qquad\text{and}\qquad \delta_2^{\Radj}\colon \cD \longrightarrow \cD_2 \]
given by adjoints. We may then make the following definition.

\begin{definition}\label{definition sph pair}\cite{KS2}
A \defined{spherical pair} $\cP$ is  a triangulated category $\cP_0$ with admissible subcategories $\cP_\pm$ and semi-orthogonal decompositions
\begin{equation*}
\big\langle \cQ_-, \cP_- \big\rangle = \cP_0 = \big\langle \cQ_+, \cP_+ \big\rangle,
\end{equation*}
such that equivalences
 \[ \cQ_- \longleftrightarrow \cQ_+ \qquad\text{and}\qquad \cP_- \longleftrightarrow \cP_+, \]
are obtained by composition of the embeddings and projections above.
\end{definition}

Spherical pairs give spherical functors, as follows.

\begin{proposition}\label{proposition.spherical_pair_to_functor} \cite[Propositions~3.7, 3.8]{KS2} Given a spherical pair as above, the canonical functor
\[
\cP_- \longrightarrow \cQ_+
\]
is spherical. Its twist acts by the composition of canonical functors
 \[ \cQ_+ \longrightarrow \cQ_- \longrightarrow \cQ_+. \]
 \end{proposition}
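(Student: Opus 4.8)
The plan is to unpack the two semi-orthogonal decompositions $\langle \cQ_-, \cP_- \rangle = \cP_0 = \langle \cQ_+, \cP_+ \rangle$ into their associated embeddings and projections, and then to exhibit the canonical functor $\cP_- \to \cQ_+$ as a composite that fits the definition of a spherical functor by producing its adjoints and the four standard triangles. Write $p_\pm \colon \cP_\pm \hookrightarrow \cP_0$ and $q_\pm \colon \cQ_\pm \hookrightarrow \cP_0$ for the embeddings, with left and right adjoints $p_\pm^{\Ladj}, p_\pm^{\Radj}$ and $q_\pm^{\Ladj}, q_\pm^{\Radj}$; these exist because the subcategories are admissible. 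The canonical functor in question is $\sphFun := q_+^{\Radj} \circ p_- \colon \cP_- \to \cQ_+$ (equivalently one could use $q_+^{\Ladj}\circ p_-$ up to the identifications in the spherical pair data). Its natural adjoint candidates are $\sphFun^{\Ladj} = p_-^{\Ladj} \circ q_+$ and $\sphFun^{\Radj} = p_-^{\Radj}\circ q_+$, and one checks the adjunctions formally from the adjunctions of the pieces.

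The key computation is to identify the cotwist and twist functors built from $\sphFun$ and show they are equivalences. From the decomposition $\cP_0 = \langle \cQ_+, \cP_+\rangle$ one has, for any object, a triangle $q_+ q_+^{\Radj} \to \Id_{\cP_0} \to p_+ p_+^{\Ladj} \to$ (the SOD gluing triangle). Restricting this triangle along $p_-$ and then applying $p_-^{\Radj}$ (or $p_-^{\Ladj}$) lets one express the composite $\sphFun^{\Radj}\sphFun = p_-^{\Radj}q_+ q_+^{\Radj}p_-$ as a cone, hence identify the cotwist $\cotwistFun_\sphFun = \operatorname{\sf Cone}(\Id_{\cP_-} \to \sphFun^{\Radj}\sphFun)[-1]$ (or its dual) with a functor of the form $p_-^{\Radj} p_+ p_+^{\Ladj} p_-$ — and this is an equivalence precisely because the spherical-pair axiom forces $\cP_- \leftrightarrow \cP_+$ to be an equivalence obtained from these embeddings and projections. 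Symmetrically, using the other decomposition $\cP_0 = \langle \cQ_-, \cP_-\rangle$ together with the twist triangle $\sphFun\sphFun^{\Radj} \to \Id_{\cQ_+} \to \twistFun_\sphFun \to$, one expresses $\twistFun_\sphFun$ via $q_+^{\Radj} q_- q_-^{\Radj} p_? \dots$, i.e. as the composite $\cQ_+ \to \cQ_- \to \cQ_+$ of canonical functors, and invokes the axiom that $\cQ_- \leftrightarrow \cQ_+$ is an equivalence to conclude $\twistFun_\sphFun$ is an equivalence. The remaining axioms of a spherical functor (the compatibility $\sphFun^{\Ladj} \cong \cotwistFun^{-1}\sphFun^{\Radj}$, or in the two-out-of-four formulation) follow from the same gluing triangles by a diagram chase using that cotwist and twist are invertible.

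The \textbf{main obstacle} is the bookkeeping of which adjoint ($\Ladj$ versus $\Radj$) appears where, and matching the resulting triangles to the precise definition of spherical functor used in the enhanced setting of~\cite[Appendix~A]{KS2}: the gluing triangle for an SOD naturally produces $\sphFun^{\Ladj}$ on one side and $\sphFun^{\Radj}$ on the other, and one must be careful that the four functors $\sphFun, \sphFun^{\Ladj}, \sphFun^{\Radj}$ together with the (co)twist genuinely satisfy all four triangles rather than just two of them — equivalently, that the two equivalences postulated in Definition~\ref{definition sph pair} are compatible with the adjunctions in the way needed. Since this is exactly the content of \cite[Propositions~3.7, 3.8]{KS2}, I would follow that argument; the statement about the twist acting as $\cQ_+ \to \cQ_- \to \cQ_+$ then drops out of the twist triangle together with the identification of $\sphFun\sphFun^{\Radj}$ via the $\langle \cQ_-,\cP_-\rangle$ decomposition.
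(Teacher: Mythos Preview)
The paper does not supply its own proof of this proposition: it is stated with a direct citation to \cite[Propositions~3.7, 3.8]{KS2} and no argument is given in the text. So there is nothing in the paper to compare your attempt against beyond the bare reference.

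That said, your sketch is the standard route and is essentially the argument in~\cite{KS2}: one writes down the embeddings $p_\pm, q_\pm$ and their adjoints, realises $\sphFun$ as $q_+^{\Radj} p_-$ (or a symmetric variant), and then uses the two SOD gluing triangles to identify the cotwist with the composite $\cP_- \to \cP_+ \to \cP_-$ and the twist with $\cQ_+ \to \cQ_- \to \cQ_+$, both of which are equivalences by the spherical pair axioms. One small point to be careful with: the direction of the SOD triangle you wrote, $q_+ q_+^{\Radj} \to \Id \to p_+ p_+^{\Ladj}$, depends on the convention for $\langle \cQ_+, \cP_+\rangle$; with the usual convention $\Hom(\cP_+,\cQ_+)=0$ the triangle runs $p_+ p_+^{\Radj} \to \Id \to q_+ q_+^{\Ladj}$, so double-check which adjoint appears where before matching to the (co)twist. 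Your remark that the delicate step is the $\Ladj$/$\Radj$ bookkeeping needed to verify all four spherical functor axioms (or the two-out-of-four criterion) is exactly right, and is precisely what \cite{KS2} handles.
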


\section{Local systems}
\label{subsection.loc_sys}

In this section I give definitions for local system of categories, and simple operations on them. These objects arise, in particular, by restricting a schober to its smooth locus, and will be used to describe and compare schobers later.

\subsection{Definitions}\label{subsection local sys def} We take the following.

\begin{definition} An $X$-coordinatized \defined{local system of categories} on a path-connected manifold $M$, for basepoints $X \subset M$, is given by
\begin{itemize}
\item a category $\mathcal{D}_i$ for each $x_i \in X$, and
\item an action of the fundamental groupoid $\pi_1(M,X)$ on $\{\mathcal{D}_i\}$.
\end{itemize}
\end{definition}

Concretely, we may take paths $ x_i \to x_j$ generating the groupoid, and corresponding functors $\mathsf{F}_{ij} \colon \mathcal{D}_i \to \mathcal{D}_j$ satisfying the groupoid relations, up to natural isomorphism: in particular, for a single basepoint $x$ we have simply an action of the fundamental group $\pi_1(M,x)$ on a category $\cD$.

We assume local systems as above to be \emph{triangulated} in the natural way.

\begin{definition}\label{definition iso} Two local systems are \emph{isomorphic} if their categories have equivalences $\mathcal{D}_i \cong \mathcal{D}'_i$, which intertwine their functors $\mathsf{F}_{ij}$ and $\mathsf{F}'_{ij}$. \end{definition}

\begin{definition}\label{definition strong iso} Two local systems are \emph{strongly isomorphic} if\, $\mathcal{D}_i = \mathcal{D}'_i$, and their $\mathsf{F}_{ij}$ and $\mathsf{F}'_{ij}$ are naturally isomorphic. \end{definition}

It would be desirable to allow isomorphisms in a suitable sense between local systems with different sets of basepoints: we do not treat this systematically here, however we supply the following notion for use later.

\begin{definition}\label{definition refinement} Take sets of basepoints $X \subset Y$ on $M$. We say that a \mbox{$Y$-coordinatized} local system is a \defined{refinement} of an $X$-coordinatized one if their actions are compatible under $\pi_1(M,X) \subset \pi_1(M,Y)$.\end{definition}

\begin{remark} A further notion of local system, not considered here, would relate natural isomorphisms between the $\mathsf{F}_{ij}$ to the path $2$-groupoid.\end{remark}

We have the usual notion of pullback of local systems, as follows.

\begin{definition}Take a continuous map $f\colon M\to N$ of manifolds, and a \mbox{$Y$-coordinatized} local system on $N$. An $X$-coordinatized local system on~$M$, for~$X=f^{-1}Y$, is induced by the map 
$ \pi_1(M,X) \to \pi_1(N,Y). $
\end{definition}
In particular, pullback along inclusions gives a notion of restriction.

\subsection{Local systems from schobers}

Perverse sheaves of vector spaces restrict to local systems away from their singularities. The following is an analog for schobers on a disk.

For some skeleton $K$, let $\gamma_i$ be a loop based at $x$, going counter-clockwise around $b_i$ in a small neighbourhood, but otherwise contained within $K$. Then $\pi_1(\Delta-B, x)$ is the free group with generators $\gamma_i$.

\smallskip

\begin{center}
\begin{tikzpicture}
 \node at (0,0) {$\loopForSkeleton{b}{$x$}{$K$}{$\gamma_n$}$};
\end{tikzpicture}
\end{center}

\begin{definition}\label{definition schober restriction} Given a schober on $(\Delta,B)$ as in Definition~\ref{schober}, define an $\{x\}$-coordinatized local system on $\Delta-B$ by, after choosing some $K$-shadow,
\begin{itemize}
\item assigning the category $\cD$ to $x$, and
\item letting $\gamma_i$ act by the twist $\twistFun_i$.
\end{itemize}
\end{definition}
\begin{remark} Up to isomorphism, this local system is independent of $K$ by construction of the braid group action on shadows from Definition~\ref{transformation for Kshadows}.\end{remark}

For a spherical pair we obtain a local system on $\Delta-\{b\}$ as follows. Fix two basepoints $x_\pm \in \partial\Delta$.

\begin{definition}\label{definition sph pair restriction} Given a spherical pair $\cP$ as in Definition~\ref{definition sph pair}, define an $\{x_\pm\}$-coordinatized local system on $\Delta-\{b\}$ by
\begin{itemize}
\item assigning the category $\cQ_\pm$ to $x_\pm$, and 
\item letting minimal counter-clockwise paths $x_\mp \to x_\pm$ act by equivalences
 \[ \cQ_\mp \longrightarrow \cQ_\pm \]
from Definition~\ref{definition sph pair}.
\end{itemize}
\end{definition}

\begin{remark} An $\{x_+\!\}$-coordinatized local system may be obtained from a spherical pair by taking  the spherical functor provided by Proposition~\ref{proposition.spherical_pair_to_functor}, interpreting it as a schober on $(\Delta,\{b\})$, and using Definition~\ref{definition schober restriction}. The local system from Definition~\ref{definition sph pair restriction} above is a refinement of this, by the latter part Proposition~\ref{proposition.spherical_pair_to_functor}.\end{remark}

\begin{remark} Clearly a dual local system may be obtained in Definition~\ref{definition sph pair restriction} by taking $\cP_\pm$ instead of $\cQ_\pm$, however we do not use this here.\end{remark}

\section{Riemann surfaces}
\label{section schobers riemann}

In this section, I give a definition of a schober on a Riemann surface, developing details of the definition indicated by \KS{} in \cite[Section~2E]{KS2}. A related definition, using more general skeleta than those considered here, is given by Harder and Katzarkov in \cite[Section~3.4]{HarKat}.

\subsection{Definitions} Let $\Sigma$ be a Riemann surface, possibly with boundary. Take a finite subset~$B$ of its interior, and choose furthermore a disk~$\Delta$ in its interior which contains~$B$. We will define a schober on $\Sigma$, possibly singular on $B$, as the data of a schober on $\Delta$ and a local system on $\overline{\Sigma - \Delta}$, agreeing on their intersection.

\begin{remark} Though the definitions in this section suffice for our present purposes, they are somewhat unwieldy, and should be considered as preliminary: improved notions would remove the choice of disk~$\Delta$, as in \cite[Section~3.4]{HarKat}, and could incorporate a categorification of `parafermionic' descriptions of perverse sheaves by \KS{}~\cite{KS3}. This is currently being pursued by Dyckerhoff, Kapranov, Schechtman, and Soibelman~\cite{DKSS}.
\end{remark}

Fix as before a basepoint $x$ in $\partial \Delta$, allow a finite (possibly empty) set of further basepoints $Y$ in $\Sigma - \Delta,$  and let $X=Y \cup \{ x \}.$ 

\begin{definition}\label{definition schober surface}\cite[Section~2E]{KS2} A \defined{schober} on $(\Sigma,B)$ is the data:
\begin{enumerate} 
\item\label{definition schober surface 1} a schober on $(\Delta,B)$ as in Definition~\ref{schober};

\item\label{definition schober surface 2} an $X$-coordinatized local system of categories on $\overline{\Sigma - \Delta}$; 

\item\label{definition schober surface 3} a strong isomorphism of the induced $\{ x \}$-coordinatized local systems on~$\partial \Delta$.
\end{enumerate}
\end{definition}

\begin{remark} The notion of strong isomorphism is from Definition~\ref{definition strong iso}: in particular, the local systems assign isomorphic categories to $x$.\end{remark}

\begin{figure}[h]
\begin{center}
\begin{tikzpicture}
 \node at (0,0) {$\skeletonForSurface{b}{$x$}{$K$}{$y$}{$\Delta$}{$\Sigma$}$};
\end{tikzpicture}
\end{center}
\caption{Riemann surface skeleton example.}
\end{figure}

\begin{remark} In Definition~\ref{definition schober surface}, after a choice of $K$-shadow, the induced local system on $\partial \Delta$ is determined by the action on $\cD$ of \begin{equation}\label{equation compos}\twistFun_1 \compose \dots \compose \twistFun_n.\end{equation}\end{remark}

\begin{example} Let $\Sigma=\mathbb{P}^1$. Then the subset $\overline{\Sigma - \Delta}$ is contractible, and so the `monodromy at infinity' (\ref{equation compos}) for a schober on $\Sigma$ is isomorphic to the identity: we give an example in Subsection~\ref{subsection compactif}.\end{example}

Restricting to a single marked point $b$, and fixing basepoints $x_\pm$ in $\partial \Delta$, we have the following.

\begin{definition}\label{definition schober pair surface} A \defined{schober of spherical pair type} on $(\Sigma,\{b\})$ is the data: 
\begin{enumerate} 
\item\label{definition schober pair surface 1} a spherical pair as in Definition~\ref{definition sph pair};
\item\label{definition schober pair surface 2} a local system as in (\ref{definition schober surface 2}) above, but with $X=Y \cup \{ x_\pm \}$;
\item\label{definition schober pair surface 3} a strong isomorphism as in (\ref{definition schober surface 3}) above, but of $\{ x_\pm \}$-coordinatized local systems.
\end{enumerate}
\end{definition}

By construction, we have the following.

\begin{proposition}A schober of spherical pair type on $(\Sigma,\{b\})$ induces a schober on $(\Sigma,\{b\})$.
\begin{proof} We give the required data for Definition~\ref{definition schober surface}. The schober on $(\Delta,\{b\})$ for Definition~\ref{definition schober surface}(\ref{definition schober surface 1}) can be taken as the spherical functor associated to the spherical pair  provided by Definition~\ref{definition schober pair surface}(\ref{definition schober pair surface 1}), using Proposition~\ref{proposition.spherical_pair_to_functor}. A~suitable local system for Definition~\ref{definition schober surface}(\ref{definition schober surface 2}), which is $Y \cup \{ x \}$-coordinatized, may be induced from the $Y \cup \{ x_\pm \}$-coordinatized local system provided by \mbox{Definition~\ref{definition schober pair surface}(\ref{definition schober pair surface 2}).}
\end{proof}
\end{proposition}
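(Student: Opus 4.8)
The plan is to verify each of the three pieces of data required by Definition~\ref{definition schober surface} by unpacking what Definition~\ref{definition schober pair surface} supplies, translating it along the bridge given by Proposition~\ref{proposition.spherical_pair_to_functor}. Since a schober on a Riemann surface is by definition just a schober on the disk $\Delta$ plus a local system on the complement $\overline{\Sigma-\Delta}$ matching on $\partial\Delta$, and since the spherical-pair data differs only in that it uses a pair instead of a spherical functor and carries the extra basepoint $x_-$, the entire proof is a matter of applying the constructions already established in Sections~\ref{section schober disk} and~\ref{subsection.loc_sys}.

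First I would produce the datum~(\ref{definition schober surface 1}): by Proposition~\ref{proposition.spherical_pair_to_functor}, the spherical pair of Definition~\ref{definition schober pair surface}(\ref{definition schober pair surface 1}) yields a spherical functor $\cP_- \to \cQ_+$, and by the final remark of Subsection~\ref{subsection schober multiple} a spherical functor is precisely a schober on $(\Delta,\{b\})$, since there is a unique isotopy class of skeletons. Next, for datum~(\ref{definition schober surface 2}), I would start from the $Y \cup \{x_\pm\}$-coordinatized local system on $\overline{\Sigma-\Delta}$ provided by Definition~\ref{definition schober pair surface}(\ref{definition schober pair surface 2}) and restrict the fundamental-groupoid action along $\pi_1(\overline{\Sigma-\Delta}, Y\cup\{x\}) \subset \pi_1(\overline{\Sigma-\Delta}, Y\cup\{x_\pm\})$ — forgetting the basepoint $x_-$ — to obtain the required $Y\cup\{x\}$-coordinatized local system. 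Finally, for datum~(\ref{definition schober surface 3}) I would check that the two induced $\{x\}$-coordinatized local systems on $\partial\Delta$ agree: on the disk side, by Definition~\ref{definition schober restriction} applied to the spherical functor, the generator of $\pi_1(\partial\Delta,x)$ acts by the twist of $\cP_-\to\cQ_+$, which by the second part of Proposition~\ref{proposition.spherical_pair_to_functor} is the composition $\cQ_+ \to \cQ_- \to \cQ_+$; and the remark following Definition~\ref{definition sph pair restriction} records exactly that the local system from Definition~\ref{definition sph pair restriction} (which is what the spherical-pair data gives on $\partial\Delta$, restricted to $x$) refines this, so on the single basepoint $x$ the two assign the same category and the same monodromy functor, giving the strong isomorphism via Definition~\ref{definition strong iso}.

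The main obstacle — modest, but the one point needing care — is datum~(\ref{definition schober surface 3}): one must confirm that restricting the spherical-pair local system of Definition~\ref{definition sph pair restriction} to the basepoint $x$ (identified with one of $x_\pm$) and restricting the schober-on-disk local system both give, \emph{on the nose as a strong isomorphism and not merely up to equivalence}, the same action of $\pi_1(\partial\Delta,x)$. This is where the hypothesis in Definition~\ref{definition schober pair surface}(\ref{definition schober pair surface 3}) that the original gluing is a \emph{strong} isomorphism is used, together with the compatibility recorded in the remark after Definition~\ref{definition sph pair restriction}; the verification is essentially bookkeeping, unwinding the definitions, but it is the step where the two different categorifications of perverse-sheaf data on a disk must be reconciled. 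I would present the argument as the three bullet-style paragraphs above, citing Proposition~\ref{proposition.spherical_pair_to_functor}, Definition~\ref{definition schober restriction}, Definition~\ref{definition sph pair restriction} and its following remark, and leaving the routine groupoid-restriction check to the reader.
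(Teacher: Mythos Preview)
Your proposal is correct and follows essentially the same approach as the paper: produce the schober on $(\Delta,\{b\})$ from the spherical pair via Proposition~\ref{proposition.spherical_pair_to_functor}, and obtain the $Y\cup\{x\}$-coordinatized local system by forgetting the basepoint $x_-$ from the $Y\cup\{x_\pm\}$-coordinatized one. Your treatment is in fact more thorough than the paper's, which gives only the data for parts~(\ref{definition schober surface 1}) and~(\ref{definition schober surface 2}) and leaves the verification of the strong isomorphism~(\ref{definition schober surface 3}) implicit; your explicit handling of this step, via the remark following Definition~\ref{definition sph pair restriction}, is exactly the right bookkeeping.
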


We also have the following restriction result.

\begin{proposition} A schober on $(\Sigma,B)$ induces an $X$-coordinatized local system of categories on $\Sigma - B$.
\begin{proof}Definition~\ref{definition schober restriction} gives an $\{x\}$-coordinatized local system on  $\Delta-B$, and  Definition~\ref{definition schober surface}(\ref{definition schober surface 2}) gives an $X$-coordinatized local system on $\overline{\Sigma - \Delta}$, which glues to it along $\partial \Delta$ by Definition~\ref{definition schober surface}(\ref{definition schober surface 3}).
\end{proof}
\end{proposition}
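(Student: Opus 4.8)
The plan is to realise $\Sigma - B$ as a union of the two regions on which a schober supplies local system data, and then to glue the corresponding local systems along their common boundary by a van Kampen--type argument for fundamental groupoids. Concretely, $\Sigma - B = (\Delta - B)\cup\overline{\Sigma-\Delta}$, with the two pieces meeting along the circle $\partial\Delta$ (which contains the basepoint $x$), so it suffices to produce compatible fundamental groupoid actions on the two pieces and patch them.

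First I would obtain the local system on $\Delta - B$: the schober on $(\Delta,B)$ from Definition~\ref{definition schober surface}(\ref{definition schober surface 1}) yields, via Definition~\ref{definition schober restriction}, an $\{x\}$-coordinatized local system on $\Delta - B$ assigning $\cD$ to $x$ and acting on the standard generators $\gamma_i$ of $\pi_1(\Delta-B,x)$ by the twists $\twistFun_i$. Next, Definition~\ref{definition schober surface}(\ref{definition schober surface 2}) already gives the $X$-coordinatized local system on $\overline{\Sigma-\Delta}$. Restricting both to $\partial\Delta$ produces two $\{x\}$-coordinatized local systems on the circle, whose monodromy is the action of $\twistFun_1\circ\dots\circ\twistFun_n$ on one side and the corresponding boundary monodromy on the other; by Definition~\ref{definition schober surface}(\ref{definition schober surface 3}) these are \emph{strongly} isomorphic in the sense of Definition~\ref{definition strong iso}, i.e.\ they assign the same category to $x$ and their $\partial\Delta$-monodromies are naturally isomorphic functors on that same category. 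To assemble the global object I would thicken $\Delta-B$ and $\overline{\Sigma-\Delta}$ to open sets $U,V$ with $U\cap V$ a neighbourhood of $\partial\Delta$, so that $\pi_1(\Sigma-B,X)$ is the groupoid pushout of $\pi_1(U,x)$ and $\pi_1(V,X)$ over $\pi_1(U\cap V,x)$; an action on $\{\cD_i\}_{x_i\in X}$ is then determined by the two given actions together with the strong isomorphism identifying them over $\pi_1(U\cap V,x)$. Since that identification is a strong isomorphism, the pushout relations are respected up to coherent natural isomorphism, which is exactly the data of a (triangulated) $X$-coordinatized local system on $\Sigma-B$; restricting along the two inclusions recovers the pieces we started from, as required.

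The step I expect to be the main obstacle is the bookkeeping in the gluing: the disk side is coordinatized only by $x$ whereas the surface side is coordinatized by $X = Y\cup\{x\}$, so one must take the groupoid pushout with the correct basepoint set and describe morphisms from $y\in Y$ into $\Delta-B$ by concatenating through $x$. One must also check that it is precisely \emph{strong} isomorphism, and not the weaker notion of Definition~\ref{definition iso}, that makes the glued functors well defined: a mere isomorphism would only pin down the pieces up to a choice of equivalences over $x$ and need not patch coherently. This is routine, but it is where the preliminary, disk-dependent form of Definition~\ref{definition schober surface} is genuinely used.
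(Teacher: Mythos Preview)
Your proposal is correct and follows essentially the same approach as the paper: produce the local system on $\Delta-B$ from Definition~\ref{definition schober restriction}, take the $X$-coordinatized local system on $\overline{\Sigma-\Delta}$ from Definition~\ref{definition schober surface}(\ref{definition schober surface 2}), and glue along $\partial\Delta$ using the strong isomorphism of Definition~\ref{definition schober surface}(\ref{definition schober surface 3}). The paper's proof is a one-sentence statement of exactly these three steps; your van Kampen bookkeeping and the remark on why \emph{strong} isomorphism is needed simply make explicit what the paper leaves to the word ``glues''.
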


The same result holds for schobers of spherical pair type.

\subsection{Extension results}\label{Sect ext res} We give simple criteria to extend schobers over points on Riemann surfaces.

Consider an $X$-coordinatized local system on $\Sigma^p$, where $\Sigma^p = \Sigma - \{p\}$, for $p$ in the interior of~$\Sigma$. Let $\Delta$ be a disk neighbourhood of $p$ also contained in the interior of~$\Sigma$, where the only points of $X$ in $\Delta$ are two points $x_\pm \in \partial \Delta$. The following is a direct consequence of the definitions.

\begin{proposition}\label{proposition schober pair extend} Take an 
$X$-coordinatized local system on $\Sigma^p$ as above. Given a spherical pair inducing the same local system on $\partial \Delta$, we obtain a schober of spherical pair type on $(\Sigma, \{p\})$
\end{proposition}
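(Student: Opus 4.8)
The plan is to assemble the three pieces of data required by Definition~\ref{definition schober pair surface} and verify they cohere. First I would take the spherical pair furnished by hypothesis as the data~(\ref{definition schober pair surface 1}). For~(\ref{definition schober pair surface 2}), I would take the restriction of the given $X$-coordinatized local system on $\Sigma^p$ to the subsurface $\overline{\Sigma - \Delta}$; since $\Delta$ was chosen so that its only basepoints are $x_\pm \in \partial\Delta$, this restricted local system is $X$-coordinatized with $X = Y \cup \{x_\pm\}$ in the sense demanded, where $Y = X \cap (\Sigma - \overline{\Delta})$.

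The content of the proof is then the compatibility~(\ref{definition schober pair surface 3}). The spherical pair induces an $\{x_\pm\}$-coordinatized local system on $\Delta - \{p\}$, hence on $\partial\Delta$, via Definition~\ref{definition sph pair restriction}; the ambient local system on $\Sigma^p$ restricts to $\partial\Delta$ as well. By hypothesis these two local systems on $\partial\Delta$ agree --- this is exactly the phrase ``inducing the same local system on $\partial\Delta$'' in the statement. I would simply record that ``the same'' here should be read as a strong isomorphism (Definition~\ref{definition strong iso}), which is the form required by~(\ref{definition schober pair surface 3}); if one only has an isomorphism, one transports the categories along those equivalences first, which is harmless since the local system on $\overline{\Sigma-\Delta}$ is only needed up to its coordinatization near $\partial\Delta$. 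This packaging gives a schober of spherical pair type on $(\Sigma, \{p\})$.

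The main obstacle, such as it is, is bookkeeping rather than mathematics: one must check that restricting the $X$-coordinatized local system from $\Sigma^p$ to $\overline{\Sigma - \Delta}$ genuinely lands in the format of Definition~\ref{definition schober surface}(\ref{definition schober surface 2}) --- that is, that $\pi_1(\overline{\Sigma - \Delta}, Y \cup \{x_\pm\})$ receives a compatible action, which follows from functoriality of $\pi_1$ under the inclusion $\overline{\Sigma - \Delta} \hookrightarrow \Sigma^p$ --- and that the two inclusions $\partial\Delta \hookrightarrow \overline{\Sigma-\Delta}$ and $\partial\Delta \hookrightarrow \overline{\Delta} - \{p\}$ induce the gluing along the common boundary circle correctly. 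Since the statement says this proposition is ``a direct consequence of the definitions'', I would keep the proof to a few sentences naming these three data and the one strong-isomorphism check.

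\begin{proof}
We supply the data of Definition~\ref{definition schober pair surface}. For~(\ref{definition schober pair surface 1}), take the given spherical pair. For~(\ref{definition schober pair surface 2}), restrict the given $X$-coordinatized local system on $\Sigma^p$ along the inclusion $\overline{\Sigma - \Delta} \hookrightarrow \Sigma^p$; by the choice of $\Delta$, the only basepoints in $\overline{\Sigma - \Delta}$ are those of $Y \cup \{x_\pm\}$, so this is of the required form. For~(\ref{definition schober pair surface 3}), the spherical pair induces an $\{x_\pm\}$-coordinatized local system on $\partial\Delta$ by Definition~\ref{definition sph pair restriction}, and the local system on $\Sigma^p$ restricts to one on $\partial\Delta$ as well; by hypothesis these agree, and after transporting categories along the relevant equivalences if necessary, the agreement is a strong isomorphism. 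This is precisely the data of a schober of spherical pair type on $(\Sigma, \{p\})$.
\end{proof}
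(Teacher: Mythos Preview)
Your proposal is correct and matches the paper's approach: the paper gives no proof at all beyond the sentence ``The following is a direct consequence of the definitions,'' and your argument is precisely the natural unpacking of that sentence into the three pieces of data required by Definition~\ref{definition schober pair surface}. Your remark about upgrading an isomorphism to a strong isomorphism by transporting categories is a reasonable gloss on what ``the same'' means in the statement, though the paper leaves this implicit.
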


Now consider a schober on $(\Sigma^p ,B)$. Let $C$ be a simple arc joining $x$ and~$p$ in $\overline{\Sigma - \Delta}$, not meeting $X$. Let $\gamma_0$ be a loop based at $x$, going counter-clockwise around $p$ in a small neighbourhood $N$, but otherwise contained in~$C$, and let $\mathsf{F}$ be the autoequivalence of $\cD$ associated to $\gamma_0$.

\begin{proposition}\label{proposition schober extend} Take a schober on $(\Sigma^p ,B)$ as above.
\begin{enumerate}
\item\label{proposition schober extend 1}  Given a presentation of\, $\mathsf{F}$ as the twist of a spherical functor, we obtain a schober on $(\Sigma ,B_p)$, where $B_p = B \cup \{p\}$.
\item\label{proposition schober extend 2}  The two schobers induce isomorphic local systems on $\Sigma^p - B$.
\end{enumerate}

\begin{proof} Recall that the data of a perverse schober on $(\Sigma^p,B)$ includes the data of a perverse schober on $(\Delta,B)$. Choose a mutual neighbourhood $\Delta_p$ in $\Sigma$ of $\Delta$, $C$, and $N$. Require furthermore that this does not meet $Y$, and deformation retracts to $\Delta$ (we use that $C$ is a simple arc here). Then for each skeleton $K$ on $\Delta$ we obtain a skeleton $K_p$ on $\Delta_p$, as sketched below.

\begin{center}
\begin{tikzpicture}
 \node at (0,0) {$\skeletonForPuncturedSurface{b}{$x$}{$K_p$}{$y$}{$\Delta$}{$\Sigma$}{$p$}{$C$}{$\Delta_p$}$};
\end{tikzpicture}
\end{center}

From a $K$-shadow for $(\Delta ,B)$ given by spherical functors $\sphFun_i$, we obtain a $K_p$\,-shadow for $(\Delta_p,B_p)$ by appending the spherical functor $\sphFun$ supplied by assumption~(\ref{proposition schober extend 1}). A general shadow for $(\Delta_p,B_p)$ is obtained from such a~\mbox{$K_p$\,-shadow} by the action of the braid group $\operatorname{Br}_{n+1}$. To obtain a schober on $(\Delta_p,B_p)$, we thence obtain shadows using the transformation rule in Definition~\ref{transformation for Kshadows}, and verify the compatibilities of Definition~\ref{schober}.

The data of a perverse schober on $(\Sigma^p,B)$ includes the data of a local system on $\overline{\Sigma^p - \Delta}$, and we obtain a local system on $\overline{\Sigma - \Delta_p}$ by restriction. Choosing a skeleton $K_p$ as above for our schober on $(\Delta_p,B_p)$, the action of the induced local system on $\partial \Delta_p$ is determined by  \[\twistFun_\sphFun \compose \twistFun_1 \compose \dots \compose \twistFun_n. \] The definition of a schober on $(\Sigma ,B_p)$ requires that the same local system on $\partial \Delta_p$ is induced by the above local system on $\overline{\Sigma - \Delta_p}$, but this follows by assumption. This shows~(\ref{proposition schober extend 1}).

The claim~(\ref{proposition schober extend 2}), that the schobers induce isomorphic local systems on $\Sigma^p - B = \Sigma - B_p$, can be checked on the cover given by $\Delta - B$ and $\overline{\Sigma^p - \Delta}$.
\end{proof}
\end{proposition}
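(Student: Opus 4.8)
The plan is to verify the three data of Definition~\ref{definition schober surface} for a schober on $(\Sigma,B_p)$, using the enlarged disk $\Delta_p$ as the ambient disk, and then to read off part~(\ref{proposition schober extend 2}) by comparing the induced local systems on an open cover of $\Sigma^p-B=\Sigma-B_p$.

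For the disk datum of Definition~\ref{definition schober surface}(\ref{definition schober surface 1}) I would, for each isotopy class $K$ of skeleta of $(\Delta,B)$, fix the extended class $K_p$ of skeleta of $(\Delta_p,B_p)$ obtained by adjoining one strand running from $x$ along a push-off of $C$ to $p$; because $C$ is a simple arc avoiding $X$ and $\Delta_p$ can be chosen to deformation retract onto $\Delta$, this identifies the $\operatorname{Br}_{n+1}$-action on skeleta of $(\Delta_p,B_p)$ with the $\operatorname{Br}_n$-action on those of $(\Delta,B)$ together with one extra strand. From a $K$-shadow $(\sphFun_i\colon\cD_i\to\cD)_{i=1}^{n}$ I would build a $K_p$-shadow by appending the spherical functor $\sphFun$ supplied by the hypothesis $\mathsf{F}\cong\twistFun_\sphFun$, so that the twist around the new strand equals $\mathsf{F}$; the shadows for the remaining isotopy classes of $(\Delta_p,B_p)$ are then forced by the transformation rule of Definition~\ref{transformation for Kshadows}, and one checks the cocycle compatibilities of Definition~\ref{schober} using the braid relations among the $f_{\sigma_i}$ recorded after that definition, together with the identifications $g_{K,\sigma}$ already carried by the schober on $(\Delta,B)$.

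The local-system datum of Definition~\ref{definition schober surface}(\ref{definition schober surface 2}) I would obtain by restricting the $X$-coordinatized local system on $\overline{\Sigma^p-\Delta}$, which is part of the given schober on $(\Sigma^p,B)$, along the inclusion $\overline{\Sigma-\Delta_p}\hookrightarrow\overline{\Sigma^p-\Delta}$; since $\Delta_p$ avoids $Y$, the basepoint set is still $X$. For the gluing datum of Definition~\ref{definition schober surface}(\ref{definition schober surface 3}): on the disk side the monodromy around $\partial\Delta_p$ relative to $K_p$ acts on $\cD$ by $\twistFun_\sphFun\compose\twistFun_1\compose\dots\compose\twistFun_n$, while on the surface side $\partial\Delta_p$ is homotopic in $\overline{\Sigma^p-\Delta}$ to a concatenation of $\gamma_0$ with $\partial\Delta$, so the associated autoequivalence is $\mathsf{F}$ composed with the autoequivalence of $\partial\Delta$; the latter already equals $\twistFun_1\compose\dots\compose\twistFun_n$ by the strong isomorphism built into the schober on $(\Sigma^p,B)$, and $\mathsf{F}\cong\twistFun_\sphFun$ by hypothesis, so the two sides match. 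This establishes part~(\ref{proposition schober extend 1}).

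For part~(\ref{proposition schober extend 2}) I would check agreement of the two induced local systems on the cover of $\Sigma^p-B$ by $\Delta-B$ and $\overline{\Sigma^p-\Delta}$: on $\overline{\Sigma^p-\Delta}$ both restrict to the same local system, whereas on $\Delta-B$ the schober on $(\Sigma^p,B)$ induces the restriction of the $(\Delta,B)$-schober via Definition~\ref{definition schober restriction}, and the schober on $(\Sigma,B_p)$ induces the restriction to $\Delta-B$ of the $(\Delta_p,B_p)$-schober --- which, since $\pi_1(\Delta-B,x)$ omits the class $\gamma_0$, sees only the twists $\twistFun_i$ and hence coincides with the former; the two agree on the overlap $\partial\Delta$ by the strong isomorphism condition for each schober. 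I expect the main obstacle to be the bookkeeping in the disk step: organizing the $\operatorname{Br}_{n+1}$-action on $K_p$-shadows and the identifications $g_{K_p,\sigma}$ so that the compatibilities of Definition~\ref{schober} hold coherently; the remaining points are a routine unwinding of the definitions and of the homotopy $\partial\Delta_p\simeq\gamma_0\cdot\partial\Delta$ in $\overline{\Sigma^p-\Delta}$.
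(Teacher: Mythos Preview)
Your proposal is correct and follows essentially the same approach as the paper: enlarge $\Delta$ to $\Delta_p$, extend each skeleton $K$ to $K_p$ by adjoining a strand along $C$, append the given spherical functor to each $K$-shadow, propagate to all shadows via the $\operatorname{Br}_{n+1}$-action, restrict the local system from $\overline{\Sigma^p-\Delta}$ to $\overline{\Sigma-\Delta_p}$, and match the boundary monodromies using $\mathsf{F}\cong\twistFun_\sphFun$. Your justification of the gluing via the homotopy $\partial\Delta_p\simeq\gamma_0\cdot\partial\Delta$ and your verification of part~(\ref{proposition schober extend 2}) on the cover $\Delta-B$, $\overline{\Sigma^p-\Delta}$ are exactly what the paper does, only spelled out in slightly more detail.
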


\begin{remark} We may take $B$ empty: then a schober on $(\Sigma^p, B)$ is just a local system on $\Sigma^p$, and the above Proposition~\ref{proposition schober extend} says that this extends to a schober on $(\Sigma, \{p\})$ if its monodromy around $p$ is a twist.
\end{remark}

\begin{remark} E.~Segal explains in \cite{Seg2} how all autoequivalences may be presented as a twist, by a dg-categorical construction. However, the criterion above remains relevant if we want to work in, for instance, the category of varieties and Fourier--Mukai kernels between them.\end{remark}

\subsection{Infinite singular points}
\label{subsection inf schober on C}
We consider a contractible domain $\mathbb{D}$ in $\mathbb{C}$, and define a perverse schober on $(\mathbb{D},B)$, with $B$ a countably infinite subset without accumulation points. We assume furthermore that $B$ is the image of an embedding $\mathbb{Z} \hookrightarrow \mathbb{D}$ which extends to an embedding of a strip~$\mathbb{R} \times I \hookrightarrow \mathbb{D}$, as explained below.

For the set $\mathcal{C}$ of isotopy classes of skeleta as in Notation~\ref{notation skeleta}, we restrict to \emph{finitary skeleta}, namely those obtained from a reference skeleton $K$ by an action of the infinite braid group with generators $\sigma_i$ for $i\in \mathbb{Z}$,
\[\operatorname{Br}_\infty = \big\langle\, \sigma_i \,\big|\, \sigma_i \sigma_{i+1} \sigma_i  = \sigma_{i+1} \sigma_i \sigma_{i+1}, \, \sigma_j \sigma_k = \sigma_k \sigma_j \text{ for $|j-k|>1$} \,\big\rangle. \]
To obtain such an action, proceed as follows. Take $I = [-1,+1] \subset\mathbb{R}$, and identify $\mathbb{Z}$ with the points $(i,0)$ of  $\mathbb{R} \times I$. We assume that the embedding $\mathbb{Z} \hookrightarrow \mathbb{D}$ extends to an embedding~$\mathbb{R} \times I \hookrightarrow \mathbb{D}$ under this identification.

Now for each $i$ take a disk~$\delta$ in the strip $\mathbb{R} \times I$ with a diameter the line segment from $(i,0)$ to $(i+1,0)$, and a slightly larger disk $\delta_\epsilon$ with the same centre. Then a diffeomorphism $\sigma_i$ of $\mathbb{R} \times I$ may be constructed which acts on $\delta$ by a half turn in the counterclockwise direction, and as the identity on the complement of $\delta_\epsilon$, so that the $\sigma_i$ satisfy the relations of $\operatorname{Br}_\infty$. These $\sigma_i$ may then be extended to diffeomorphisms $\sigma_i$ of $\mathbb{D}$, acting as the identity on the complement of $\mathbb{R} \times I$, via the assumed embedding~$\mathbb{R} \times I \hookrightarrow \mathbb{D}$, giving an action of action of $\operatorname{Br}_\infty$ on $\mathbb{D}$.

The induced action of $\operatorname{Br}_\infty$ on $\mathcal{C}$ is simply transitive by construction. The definition then proceeds as in Subsection~\ref{subsection schober multiple}.

\begin{remark} Given a schober on $(\mathbb{D},B)$, a local system on $\mathbb{D}-B$ may be obtained as in Definition~\ref{definition schober restriction}. The description used there of the fundamental group of $\mathbb{D}-B$ still holds, using the accumulation point assumption above. \end{remark}

\begin{remark}It would be more direct to consider all skeleta, not just finitary skeleta as above. Indeed for $(\mathbb{D},B)=(\mathbb{C},\mathbb{Z})$, it follows from work of Fabel~\cite{Fabel} that an appropriate mapping class group is a completion of~$\operatorname{Br}_\infty$, but we wish to avoid considering limits of functor compositions.\end{remark}

\section{Geometric invariant theory}
\label{section GIT}

I review results from GIT sufficient to recall, from previous work~\cite{Don}, a construction of a spherical pair for certain GIT wall crossings: this is Theorem~\ref{theorem.sph_pair}. For further details on GIT, I refer the reader to treatments of Halpern-Leistner~\cite{HL}, and Ballard, Favero, and Katzarkov~\cite{BFK}.

\subsection{Setting} We consider the following.

\begin{setup}\label{GIT setup} Take a projective-over-affine variety $V$ with
\begin{itemize}
\item an action of a torus~$G$, and
\item a linearization~$\cM$.
\end{itemize}
\end{setup}

This data determines a semistable locus $V^{\mathrm{ss}}(\cM) \subseteq V$: we take the \defined{\mbox{GIT quotient}} to be the quotient stack $V^{\mathrm{ss}}/G$. The complement of $V^{\mathrm{ss}}$ admits a GIT stratification by strata $S^i$, with associated one-parameter subgroups~$\lambda^i$ of~$G$: each~$S^i$ contains an open subvariety $Z^i$ of the $\lambda^i$-fixed locus.

Take a wall crossing between linearizations $\linzn_\pm$, with  linearization $\linzn_0$ on the wall. Then the semistable loci for $\linzn_\pm$ may be obtained from the locus for $\linzn_0$ by removing appropriate strata, say $S^i_\pm$. We use the following notion, specializing a situation considered by Halpern-Leistner~\cite[Definition 4.4]{HL}.

\begin{definition}\label{definition simple balanced} \cite[Definition~3.18]{Don} A wall crossing is \defined{simple balanced} if 
\[
V^{\mathrm{ss}}(\linzn_0) = V^{\mathrm{ss}}(\linzn_\pm) \cup S_\pm
\]
where $S_\pm$ are strata for the linearizations $\cM_\pm$, associated with equal fixed subvarieties $Z$, and inverse one-parameter subgroups, as follows. \[\lambda_- = (\lambda_+)^{-1}\]
\end{definition}
We adopt a convention that the subgroup $\lambda_-$ is associated to the wall, and write $\lambda=\lambda_-$ for brevity.

\smallskip

Consider a single stratum $S$, dropping indices~$i$ for simplicity. By construction, $S$ consists of the points of $X$ which flow to $Z$ under $\lambda$, inducing a $G$-equivariant morphism $\pi\colon S\to Z$.

\subsection{Derived categories} Assume for simplicity in this subsection that the GIT stratification consists of a single stratum.

As notation, for $F^{\bullet} \in \D(V/G)$, write $\operatorname{wt}_\lambda F^{\bullet}$ for the set of $\lambda$-weights which appear in some derived restriction of $F^{\bullet}$ to~$Z$. Then for an integer~$w$, a \emph{window} $\mathcal{G}^{[w,w+\eta)}$ is defined as the full subcategory of $\D(V/G)$ with objects
\begin{align*}
\mathcal{G}^{[w,w+\eta)} &= \big\{\, F^{\bullet} \in \D(V/G) \mathrel{\big|} \operatorname{wt}_\lambda F^{\bullet} \subseteq [w,w+\eta) \,\big\},
\end{align*}
where $\eta$ is given as follows.
\begin{definition}\label{definition window width} The \defined{window width} $\eta$ is the $\lambda$-weight of $\operatorname{det} \mathcal{N}^{\vee}_S V$ on $Z$.\end{definition}

Then we have the following theorem of Halpern-Leistner, and Ballard, Favero, and Katzarkov.

\begin{theorem}\label{theorem.splittings_of_res}\cite{HL,BFK}
The restriction functor to the GIT~quotient
\[ \operatorname{res} \colon \mathcal{G}^{[w,w+\eta)} \subset \D(V/G) \longrightarrow \D(V^{\mathrm{ss}}/G) \]
 is an equivalence.
\end{theorem}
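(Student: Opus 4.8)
The plan is to factor $\operatorname{res}$ through a Verdier localization and then check fully faithfulness and essential surjectivity separately. For the open immersion $j\colon V^{\mathrm{ss}}/G\hookrightarrow V/G$ with closed complement $S/G$, the pullback $j^{\ast}=\operatorname{res}$ identifies $\D(V^{\mathrm{ss}}/G)$ with the Verdier quotient of $\D(V/G)$ by the full subcategory $\D(V/G\,|\,S)$ of objects with set-theoretic support on $S$; I would take this localization statement as known, following Halpern-Leistner~\cite{HL} and Ballard--Favero--Katzarkov~\cite{BFK}, using that $V$ projective-over-affine with $G$ a torus makes $V/G$ sufficiently well-behaved. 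It then suffices to show two things: first, that for $F,G\in\mathcal{G}^{[w,w+\eta)}$ the comparison map $\Hom_{V/G}(F,G)\to\Hom_{V^{\mathrm{ss}}/G}(\operatorname{res}F,\operatorname{res}G)$ is an isomorphism; and second, that every object of $\D(V/G)$ becomes, after restriction, isomorphic to the restriction of an object of $\mathcal{G}^{[w,w+\eta)}$.

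For the first point, the cone of the comparison map is (a shift of) $\Hom_{V/G}(F,i_{\ast}i^{!}G)$, where $i\colon S/G\hookrightarrow V/G$; equivalently it is governed by the local cohomology of $G$ along $S$. Here I would use the geometry of the attracting stratum: $\pi\colon S\to Z$ exhibits $S$ as an affine-space bundle over the $\lambda$-fixed locus $Z$, and $\cO_S$ admits a Koszul-type resolution over $\cO_V$ built from the conormal bundle, whose $\lambda$-weights on $Z$ are those of $\mathcal{N}^{\vee}_{S/V}|_Z$. A weight count then shows that $i^{!}G|_Z$ has all $\lambda$-weights outside $[w,w+\eta)$ — in fact in $[w-\eta,w)$ — whenever $\operatorname{wt}_{\lambda}G\subseteq[w,w+\eta)$, where $\eta$ is the window width of Definition~\ref{definition window width}, the $\lambda$-weight of $\det\mathcal{N}^{\vee}_{S/V}$ on $Z$. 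Since $\Hom$ over the fixed stack $Z/G$ detects only the weight-zero part and $F|_Z$ has weights in $[w,w+\eta)$, the disjointness of these ranges forces the cone to vanish. This is the step where the precise value of $\eta$ is forced: a strictly narrower window still gives fully faithfulness, a strictly wider one does not.

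For the second point I would use the weight-graded (baric) semiorthogonal structure of $\D(V/G\,|\,S)$ established in~\cite{HL,BFK}, whose building blocks are copies of $\D(Z/G)$ embedded by $i_{\ast}(\pi^{\ast}(\,\cdot\,)\otimes L_j)$ for line bundles $L_j$ of $\lambda$-weight $j$ on $Z$, and whose ``gaps'' again have width $\eta$ by the same Koszul computation. Given $E\in\D(V/G)$, if a $\lambda$-weight $\geq w+\eta$ occurs along $Z$ one cones off a suitable object built from the weight-$\geq w+\eta$ blocks, which lies in $\D(V/G\,|\,S)$ and so has trivial restriction, strictly shrinking the set of weights that occur; symmetrically for weights $<w$. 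Since $E$ is a bounded complex of coherent sheaves on the projective-over-affine $V$, only finitely many weights occur, so this process terminates at an object of $\mathcal{G}^{[w,w+\eta)}$ with the same restriction as $E$; combined with the fact that $\operatorname{res}$ is essentially surjective as a Verdier quotient functor, this proves essential surjectivity of $\operatorname{res}$ on the window.

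The hard part will be the weight bookkeeping underlying both points: identifying the ``forbidden window'' and checking that it has width exactly $\eta=\operatorname{wt}_{\lambda}\det\mathcal{N}^{\vee}_{S/V}|_Z$, so that width $\eta$ is precisely the borderline case, together with the care needed to pass between unbounded quasi-coherent complexes — where the localization sequence and the semiorthogonal structure of $\D(V/G\,|\,S)$ are most transparent — and the bounded coherent categories of the statement, using finiteness of weights to descend. For $G$ a torus these reductions genuinely simplify matters: there is no Levi or parabolic correction to $\eta$, and $\pi\colon S\to Z$ is an honest vector-bundle quotient, so the local model near $Z$ is just $Z$ times a linear $\lambda$-representation and the Koszul computation is explicit.
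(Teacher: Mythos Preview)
The paper does not give its own proof of this statement: Theorem~\ref{theorem.splittings_of_res} is simply attributed to Halpern-Leistner~\cite{HL} and Ballard--Favero--Katzarkov~\cite{BFK} and then used as a black box, so there is nothing in the paper to compare your argument against. Your proposal is a faithful sketch of the proof as it appears in those references --- the Verdier localization along the open immersion, the fully-faithfulness step via local cohomology along $S$ and the Koszul/conormal weight shift by exactly $\eta$, and essential surjectivity via iterated weight truncation using the baric structure on $\D(V/G\,|\,S)$ --- and you correctly identify the toric simplifications (no Levi shift, $S\to Z$ an honest affine bundle). There is no gap here; if anything, you have supplied more than the paper asks for, since in context the theorem is only being quoted.
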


Given a wall crossing, we may then make the following definition. Denote the GIT quotients corresponding to either side of the wall by $X_{\pm}.$
\begin{definition}\label{definition window equivs}Let the \defined{window equivalence} \[\Phi^w\colon \D(X_{\pm}) \longrightarrow \D(X_{\mp})\] be given by composition of equivalences from Theorem~\ref{theorem.splittings_of_res} as follows.
\[\D(X_{-}) \overset{\sim}{\longleftarrow} \mathcal{G}^{[w,w+\eta)} \overset{\sim}{\longrightarrow} \D(X_{+})\]
\end{definition}

The following lemma is given by Halpern-Leistner and Shipman. Write $\D(Z/G)^w$ for the full subcategory of $\D(Z/G)$ whose objects have $\lambda$-weight~$w$, and define a subcategory $\mathcal{G}^{[w,w+\eta]}$ of $\D(V/G)$ similarly to $\mathcal{G}^{[w,w+\eta)}$ above.

\begin{lemma}\label{lemma.iota_adjoints}\cite{HLShipman} Recall that we have $\pi\colon S \to Z$, and let $j$ be the inclusion of $S$ into $V$\!. For each $\lambda$-weight $w$, the following functor is an embedding.
\[  \iota = j_* \pi^* \colon \D(Z/G)^w \longrightarrow \mathcal{G}^{[w,w+\eta]} \]

\end{lemma}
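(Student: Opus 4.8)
The plan is to verify that $\iota = j_* \pi^*$ is fully faithful by computing $\Hom$-spaces, and to confirm that its essential image lies in $\mathcal{G}^{[w,w+\eta]}$ by a weight estimate. First I would check the target: since $S$ is the attracting locus flowing to $Z$ under $\lambda$, the map $\pi \colon S \to Z$ is an affine bundle, so $\pi^*$ of a weight-$w$ object on $Z/G$ has $\lambda$-weights concentrated at $w$; pushing forward along the closed immersion $j$ spreads these weights upward by the weights appearing in the conormal bundle $\mathcal{N}^\vee_S V$, which by definition of the window width $\eta$ (Definition~\ref{definition window width}) lie in $[0,\eta]$. Hence $\operatorname{wt}_\lambda(j_* \pi^* F^\bullet) \subseteq [w,w+\eta]$, placing $\iota F^\bullet$ in $\mathcal{G}^{[w,w+\eta]}$. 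This is the routine part.

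For full faithfulness, I would use adjunction together with the local structure of $S$ inside $V$. Concretely, $\Hom_{\D(V/G)}(j_*\pi^* F, j_*\pi^* G) \cong \Hom_{\D(S/G)}(\pi^* F, j^! j_* \pi^* G)$, and one resolves $j^! j_*$ via the self-intersection formula: $j^! j_* (\placeholder) \cong (\placeholder) \otimes \textstyle\bigwedge^{\bullet} \mathcal{N}^\vee_S V [-\operatorname{codim}]$ (or, more carefully, there is a spectral sequence / Koszul filtration with these graded pieces). Now restrict attention to the weight-$w$ component on $Z$: since $F$ and $G$ have pure $\lambda$-weight $w$, the maps $\pi^* F, \pi^* G$ are pulled back from $Z$, so all the $\Hom$-computation can be pushed down along $\pi$ to $Z/G$, where $\pi_* \pi^* = \Id$ because $\pi$ is an affine bundle. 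The terms coming from the higher exterior powers $\bigwedge^{k} \mathcal{N}^\vee_S V$ for $k > 0$ contribute $\lambda$-weights strictly greater than $0$ relative to the weight-$w$ piece we care about, hence vanish after taking the weight-$w$ part — this is exactly the mechanism that isolates the $k=0$ term and yields $\Hom_{\D(Z/G)}(F,G)$ on the nose.

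The main obstacle I anticipate is making the weight-vanishing argument precise: one must track $\lambda$-weights carefully through the Koszul resolution of $j^!j_*$, verify that the relevant $\Hom$-group only sees the weight-$w$ part of $j^! j_* \pi^* G$ (because $\pi^*F$ is concentrated in weight $w$ and $\Hom$ pairs weight $w$ against weight $w$), and check that all other Koszul graded pieces have weights in $(w, w+\eta]$ on $Z$, hence are orthogonal to $\pi^* F$. I would either cite the relevant computation from Halpern-Leistner--Shipman~\cite{HLShipman} directly, or carry it out using the "baric" / weight decomposition of $\D(Z/G)$ into $\D(Z/G)^w$; once the weight bookkeeping is set up, fully faithfulness falls out formally, and combined with the first paragraph this gives that $\iota$ is an embedding into $\mathcal{G}^{[w,w+\eta]}$.
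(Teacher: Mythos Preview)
The paper gives no proof of this lemma: it is stated with a bare citation to Halpern-Leistner--Shipman~\cite{HLShipman} and nothing further. There is therefore no argument in the paper to compare yours against.

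Your sketch is essentially the argument one finds in the cited source, and it is correct in outline: the Koszul filtration on the derived restriction to $Z$ places the image of $\iota$ in $\mathcal{G}^{[w,w+\eta]}$, and full faithfulness follows from adjunction together with $\lambda$-weight orthogonality on $Z/G$. One detail to tighten: your displayed formula for $j^!j_*$ omits the twist by $\det\mathcal{N}_{S}V$ that comes from $j^!$. With that twist included, the Koszul piece that survives and returns the identity is the top one ($k=\operatorname{codim}$), while the remaining pieces carry $\lambda$-weight strictly \emph{below} $w$ rather than above; the orthogonality argument then runs exactly as you say, just with the inequality reversed. You already flag this looseness (``or, more carefully\dots''), and once the bookkeeping is straightened out the proof is complete.
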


\subsection{Spherical pair from wall crossing}

Assume given a simple balanced wall crossing for a toric GIT problem $V/G$ as in Setup~\ref{GIT setup}. In this subsection I review a construction of a spherical pair in this setting, dependent on an integer weight $w$.

Recall that the wall crossing has an associated subvariety~$Z$, and  one-parameter subgroup~$\lambda$.

\begin{assumption}\label{assumption.sph_pair}\cite[Theorem~A]{Don} Assume that:

\begin{enumerate}
\item\label{theorem assumption 1} the variety $V$ is smooth in a $G$-equivariant neighbourhood of $Z$;
\item\label{theorem assumption 2} the canonical sheaf\, $\omega_V$ has $\lambda$-weight zero on $Z$.
\end{enumerate}
\end{assumption}

\begin{remark}
Note that this is satisfied in particular if $V$ is smooth and $G$-equivariantly Calabi--Yau.
\end{remark}

Recall that we take a wall crossing between linearizations~$\linzn_\pm$ with GIT quotients~$X_{\pm}$, and \mbox{linearization~$\linzn_0$} on the wall.  Window widths $\eta_\pm$ are defined to be the $\lambda_\pm$-weight of $\det \cN^\vee_{S_\pm} V$~on~$Z$. We then have the following.

\begin{theorem}\label{theorem.sph_pair}\cite[Theorem~A]{Don} Take a simple balanced wall crossing for a toric GIT problem $V/G$ satisfying Assumption~\ref{assumption.sph_pair}. It follows that $\eta_+=\eta_-$, and we write $\eta$ for their common value.

\renewcommand{\theenumi}{\arabic{enumi}}
\begin{enumerate}
\item\label{theorem.sph_pair 1} For each integer $w$, there exists a spherical pair $\cP$ determined by 
\[ \big\langle \D(X_{-})\,,\, \D(Z/G)^{\vphantom{\eta} w} \,\big\rangle = \mathcal{P}_0 = \big\langle \D(X_{+})\,,\, \D(Z/G)^{w+\eta} \,\big\rangle\]
where superscripts denote $\lambda$-weight subcategories, and $\mathcal{P}_0$ is a full subcategory with objects
\[
\mathcal{P}_0 = \left\{ \begin{array}{c|c} F^{\bullet} \in \D\!\big(V^{\mathrm{ss}}(\linzn_0)/G\,\big) & \operatorname{wt}_{\lambda} F^{\bullet} \subseteq [w,w+\eta] \end{array} \right\}.
\]

\item\label{theorem.sph_pair 2} The spherical pair $\mathcal{P}$ induces window equivalences
\begin{align*}
\Phi^w & \colon \D(X_{-}) \longrightarrow \D(X_{+}) \\
\Phi^{w+1} & \colon \D(X_{+}) \longrightarrow \D(X_{-})
\end{align*}
obtained by applying Definition~\ref{definition window equivs} to $V^{\mathrm{ss}}(\linzn_0)/G$.
\end{enumerate}

\end{theorem}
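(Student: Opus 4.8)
Since this is \cite[Theorem~A]{Don}, the plan follows that paper: prove the two assertions in turn, using Theorem~\ref{theorem.splittings_of_res} and Lemma~\ref{lemma.iota_adjoints} as the main inputs; I outline the argument. First I would establish $\eta_+ = \eta_-$. Since $S_\pm$ are the $\lambda_\pm$-attracting strata of the common fixed locus $Z$ and $\lambda_- = \lambda_+^{-1}$, the restrictions $\cN_{S_+}V|_Z$ and $\cN_{S_-}V|_Z$ are complementary sub-bundles of $\cN_Z V$, namely the strictly negative and strictly positive $\lambda_+$-weight spaces (in one order or the other). Writing out the two window widths $\eta_\pm$ — the $\lambda_\pm$-weights of $\det\cN^\vee_{S_\pm}V$ on $Z$ — and using $\lambda_- = \lambda_+^{-1}$, one finds that $\eta_+ - \eta_-$ equals, up to sign, the sum of all the $\lambda_+$-weights on $TV|_Z$, i.e.\ the $\lambda_+$-weight of $\det TV|_Z = (\omega_V|_Z)^\vee$, which vanishes by Assumption~\ref{assumption.sph_pair}(\ref{theorem assumption 2}); hence $\eta_+=\eta_-$. (The zero-weight part of $TV|_Z$, which contains $TZ$, is irrelevant to the sum.) Part~(\ref{theorem assumption 1}) of that assumption makes $Z$, $S_\pm$ and these bundles smooth near $Z$, so the weights above are defined.

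For part~(\ref{theorem.sph_pair 1}), I would work throughout inside $\D(V^{\mathrm{ss}}(\linzn_0)/G)$, relative to which the wall crossing contributes the single pair of strata $S_\pm$, both with fixed locus $Z$. Take $\cP_0$ to be the stated closed window $\{\operatorname{wt}_\lambda \subseteq [w,w+\eta]\}$. Its two half-open sub-windows $\{\operatorname{wt}_\lambda \subseteq (w,w+\eta]\}$ and $\{\operatorname{wt}_\lambda \subseteq [w,w+\eta)\}$ have width $\eta=\eta_+=\eta_-$, so Theorem~\ref{theorem.splittings_of_res} — applied to the strata $S_-$ and $S_+$ respectively, using $\lambda_-=\lambda_+^{-1}$ to match weight conventions — identifies them by restriction with $\D(X_-)$ and $\D(X_+)$; these are $\cP_\pm$. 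Lemma~\ref{lemma.iota_adjoints}, together with the variant of it using the complementary pair of adjoints, produces fully faithful functors $\D(Z/G)^w \to \cP_0$ and $\D(Z/G)^{w+\eta} \to \cP_0$ with images supported on $S_-$ and on $S_+$; these are $\cQ_\pm$. One then checks the two semi-orthogonal decompositions $\langle\cQ_-,\cP_-\rangle = \cP_0 = \langle\cQ_+,\cP_+\rangle$: this is the statement that enlarging a width-$\eta$ window by a single $\lambda$-weight splits off the corresponding weight space on the relevant stratum, which is part of the local structure theory of~\cite{HL,BFK,HLShipman}.

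The cross-functors of Definition~\ref{definition sph pair} are then identified concretely. The composite embedding--projection $\cQ_- \to \cP_0 \to \cQ_+$ becomes, under $\cQ_\pm \simeq \D(Z/G)$, a twist by a character of $\lambda$-weight $\eta$, hence an equivalence. The composite $\cP_- \to \cP_0 \to \cP_+$ is the window-shift (mutation) functor between the two adjacent width-$\eta$ windows; transported through the restriction identifications $\cP_\pm \simeq \D(X_\pm)$, a short diagram chase shows it equals the composite $\D(X_-) \overset{\sim}{\longleftarrow} \mathcal{G}^{[w,w+\eta)} \overset{\sim}{\longrightarrow} \D(X_+)$ of Definition~\ref{definition window equivs}, i.e.\ $\Phi^w$, and is in particular an equivalence by Theorem~\ref{theorem.splittings_of_res}; running the reverse composite $\cP_+ \to \cP_0 \to \cP_-$, which passes through the window $\mathcal{G}^{[w+1,w+1+\eta)}$, gives $\Phi^{w+1}$ in the opposite direction. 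This simultaneously proves part~(\ref{theorem.sph_pair 2}).

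I expect the main obstacle to be the bookkeeping in the second and third paragraphs: selecting which adjoints realise $\cQ_\pm$, fixing the directions of the two semi-orthogonal decompositions consistently, and — crucially — verifying that adjoining the extra $\lambda$-weight really yields a semi-orthogonal \emph{decomposition}, so that the composite cross-functors are genuine equivalences and not merely fully faithful. This is precisely where the hypotheses that the wall crossing is simple balanced (Definition~\ref{definition simple balanced}) and the Calabi--Yau-type Assumption~\ref{assumption.sph_pair} are doing the real work.
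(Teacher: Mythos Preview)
The paper does not prove this theorem: it is stated as a direct quotation of \cite[Theorem~A]{Don}, with no argument given here beyond the citation. There is therefore no ``paper's own proof'' to compare your proposal against. Your outline is a reasonable reconstruction of how the argument in \cite{Don} proceeds --- the equality $\eta_+=\eta_-$ from the weight-zero condition on $\omega_V$, the nested closed/half-open windows giving the two semi-orthogonal decompositions via Theorem~\ref{theorem.splittings_of_res} and Lemma~\ref{lemma.iota_adjoints}, and the identification of the cross-functors with the window equivalences $\Phi^w$, $\Phi^{w+1}$ --- and you correctly flag the bookkeeping around adjoints and the genuineness of the decomposition as the delicate points; but any assessment of correctness must be made against \cite{Don}, not the present paper.
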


\subsection{Spherical functors}\label{section sph functor}

We will use the following sequence of spherical functors obtained by Halpern-Leistner and Shipman~\cite[Section~3.2]{HLShipman}, and shown to arise from the spherical pair of Theorem~\ref{theorem.sph_pair} by the author~\cite{Don}.

\begin{corollary}\label{corollary sph functor}\cite[Corollary~4.6]{Don} In the setting of Theorem~\ref{theorem.sph_pair}, the given spherical pair induces a spherical functor 
\[
\sphFun^w =  \operatorname{res}_+ \circ \, \iota_{-} \colon \D(Z/G)^w  \longrightarrow \D(X_{+}),
\]
where $\iota_{-}$ is obtained from Lemma~\ref{lemma.iota_adjoints}, applied to~$V^{\mathrm{ss}}(\linzn_0)/G$.\end{corollary}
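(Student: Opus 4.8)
The plan is to read off $\sphFun^w$ directly from Proposition~\ref{proposition.spherical_pair_to_functor}, applied to the spherical pair produced by Theorem~\ref{theorem.sph_pair}(1) for the GIT problem $V^{\mathrm{ss}}(\linzn_0)/G$. Matching the notation of Definition~\ref{definition sph pair} with that theorem, we take $\cQ_- = \D(X_-)$, $\cP_- = \D(Z/G)^w$, the category $\cP_0$ as displayed there, $\cQ_+ = \D(X_+)$, and $\cP_+ = \D(Z/G)^{w+\eta}$. Proposition~\ref{proposition.spherical_pair_to_functor} then at once gives that the canonical functor $\cP_- \to \cQ_+$, that is, a functor $\D(Z/G)^w \to \D(X_+)$, is spherical, with twist the composite $\D(X_+) \to \D(X_-) \to \D(X_+)$ of the window equivalences from Theorem~\ref{theorem.sph_pair}(2). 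So the one thing to do is to identify this canonical functor with $\operatorname{res}_+ \circ \iota_-$.

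First I would unwind the canonical functor as the composite $\cP_- \hookrightarrow \cP_0 \to \cQ_+$ of the embedding $\delta_-$ of the factor $\cP_-$ with the projection $p_+$ onto $\cQ_+$ attached to the decomposition $\cP_0 = \langle \cQ_+, \cP_+ \rangle$. By the construction of the spherical pair in \cite{Don}, together with Lemma~\ref{lemma.iota_adjoints}, the embedding $\delta_-$ of $\D(Z/G)^w$ into $\cP_0 \subseteq \D(V^{\mathrm{ss}}(\linzn_0)/G)$ is exactly $\iota_- = j_* \pi^*$ for the stratum $S_-$: here one uses that $\cP_0$ coincides with the window $\mathcal{G}^{[w,w+\eta]}$ of $V^{\mathrm{ss}}(\linzn_0)/G$, which is the target of this $\iota_-$. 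Likewise the other factor $\cP_+ = \D(Z/G)^{w+\eta}$ sits inside $\cP_0$ as the image of $j_{+*}\pi_+^*$, so consists of objects set-theoretically supported on the stratum $S_+$, which is closed in $V^{\mathrm{ss}}(\linzn_0)$ for a simple balanced wall crossing.

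Next I would show that $p_+$ and $\operatorname{res}_+$ agree on $\cP_0$. Since $\delta_+$ identifies $\D(X_+)$ with a window subcategory of $\cP_0$, Theorem~\ref{theorem.splittings_of_res} gives $\operatorname{res}_+ \circ \delta_+ \cong \Id$. For $F \in \cP_0$, the semi-orthogonal triangle of $F$ with respect to $\langle \cQ_+, \cP_+ \rangle$ relates $F$ to $\delta_+ p_+(F)$ and to an object of $\cP_+$; as $\operatorname{res}_+$ is restriction to the open substack $X_+ = V^{\mathrm{ss}}(\linzn_+)/G$ whose complement is $S_+$, it annihilates that $\cP_+$-term. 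Applying $\operatorname{res}_+$ to the triangle hence gives $\operatorname{res}_+(F) \cong \operatorname{res}_+ \delta_+ p_+(F) \cong p_+(F)$, naturally in $F$. Composing with $\delta_- = \iota_-$, the canonical functor $p_+ \circ \delta_-$ is isomorphic to $\operatorname{res}_+ \circ \iota_- = \sphFun^w$, as required.

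The mathematical content is light --- the statement is \cite[Corollary~4.6]{Don}, and the above is a re-derivation from the results recalled in this section --- so the main obstacle is just bookkeeping: one must line up the three GIT problems $X_-$, $V^{\mathrm{ss}}(\linzn_0)/G$, $X_+$ with their weight windows, using Assumption~\ref{assumption.sph_pair}(2) to see that $\eta_+ = \eta_-$ and that the windows relevant to $X_\pm$ are ones for which Theorem~\ref{theorem.splittings_of_res} applies, and one must keep track of which adjoint the projection $p_+$ is and which the canonical functor of Proposition~\ref{proposition.spherical_pair_to_functor} uses. None of this is hard, but it is where the care lies.
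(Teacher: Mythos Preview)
Your proposal is correct and, in fact, supplies more than the paper does: the paper gives no proof of this corollary, merely citing it as \cite[Corollary~4.6]{Don}. Your argument is precisely the natural one --- apply Proposition~\ref{proposition.spherical_pair_to_functor} to the spherical pair of Theorem~\ref{theorem.sph_pair}, then identify the abstract canonical functor $\cP_- \to \cQ_+$ concretely as $\operatorname{res}_+ \circ \iota_-$ by noting that the embedding of $\cP_-$ is $\iota_-$ (Lemma~\ref{lemma.iota_adjoints}) and that the projection onto $\cQ_+$ agrees with $\operatorname{res}_+$ since the other factor $\cP_+$ is supported on the closed stratum $S_+$. This is also the argument in the cited reference, so there is nothing further to compare.
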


\begin{notation}\label{notation sph functor} Write $\twistFun^w$ for the twist of $\sphFun^w$.\end{notation}

\begin{proposition}\label{proposition derived monodromy}\cite[Proposition~3.7(1)]{Don} In the setting of Theorem~\ref{theorem.sph_pair}, we have an isomorphism
\[
\twistFun^w \cong \Phi^w \circ \Phi^{w+1}
\]
of autoequivalences of $\D(X_{+})$.
\end{proposition}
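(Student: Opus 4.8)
The plan is to read the identity off by assembling three ingredients already established: Proposition~\ref{proposition.spherical_pair_to_functor}, which says that the twist of the spherical functor attached to a spherical pair is the round trip of the pair's canonical functors; Corollary~\ref{corollary sph functor}, which identifies that abstract spherical functor with $\sphFun^w$; and Theorem~\ref{theorem.sph_pair}(2), which identifies the canonical equivalences of the pair with the window equivalences. Concretely, I would first fix the dictionary with Definition~\ref{definition sph pair}: applying Theorem~\ref{theorem.sph_pair}(1) to the GIT problem $V^{\mathrm{ss}}(\linzn_0)/G$ yields the spherical pair
\[
\big\langle \D(X_{-}),\, \D(Z/G)^{w} \big\rangle = \cP_0 = \big\langle \D(X_{+}),\, \D(Z/G)^{w+\eta} \big\rangle ,
\]
so in the notation of Definition~\ref{definition sph pair} we take $\cQ_\pm = \D(X_\pm)$, $\cP_- = \D(Z/G)^{w}$ and $\cP_+ = \D(Z/G)^{w+\eta}$, with $\cP_0$ the closed window $\mathcal{G}^{[w,w+\eta]}$ and the weight pieces included by the functors of Lemma~\ref{lemma.iota_adjoints}. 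Under this dictionary the canonical spherical functor $\cP_- \to \cQ_+$ of Proposition~\ref{proposition.spherical_pair_to_functor} is exactly $\sphFun^w = \operatorname{res}_+ \circ \iota_-$, by Corollary~\ref{corollary sph functor}; hence Proposition~\ref{proposition.spherical_pair_to_functor} gives that $\twistFun^w$ acts as the composition of canonical functors
\[
\cQ_+ = \D(X_+) \longrightarrow \cQ_- = \D(X_-) \longrightarrow \cQ_+ = \D(X_+),
\]
each being an inclusion into $\cP_0$ via one of the two displayed semiorthogonal decompositions, followed by a projection via the other.

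It then remains only to recognise these two canonical functors as window equivalences, which is precisely the content of Theorem~\ref{theorem.sph_pair}(2): the equivalence $\cQ_- \to \cQ_+$ furnished by the pair is $\Phi^{w}\colon \D(X_-) \to \D(X_+)$, and the equivalence $\cQ_+ \to \cQ_-$ is $\Phi^{w+1}\colon \D(X_+) \to \D(X_-)$, both in the sense of Definition~\ref{definition window equivs} applied to $V^{\mathrm{ss}}(\linzn_0)/G$. Substituting these into the composition above, in the order dictated by Proposition~\ref{proposition.spherical_pair_to_functor}, yields
\[
\twistFun^w \;\cong\; \Phi^{w} \circ \Phi^{w+1}
\]
as autoequivalences of $\D(X_+)$, which is the claim.

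The one point needing genuine work, rather than quotation, is the identification in the second paragraph of the two semiorthogonal projections with the \emph{specific} window equivalences $\Phi^w$ and $\Phi^{w+1}$, as opposed to some other index or an inverse. This requires unwinding how the half-open window $\mathcal{G}^{[w,w+\eta)}$ and the shifted window $\mathcal{G}^{[w+1,\,w+1+\eta)}$ sit inside the closed window $\cP_0 = \mathcal{G}^{[w,w+\eta]}$ as the complements of the two weight pieces embedded via Lemma~\ref{lemma.iota_adjoints}, matching them with $\D(X_\pm)$ via $\operatorname{res}_\pm$ (whose kernels, on $\cP_0$, are exactly those weight pieces), and keeping track of the shift by the window width $\eta$. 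All of this is carried out in~\cite{Don}; in the present paper it suffices to cite Theorem~\ref{theorem.sph_pair}(2) and Proposition~\ref{proposition.spherical_pair_to_functor} and to combine them as above.
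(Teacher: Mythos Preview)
Your argument is correct and is exactly the route one would take: the present paper gives no proof of this proposition but simply cites \cite[Proposition~3.7(1)]{Don}, and your reconstruction assembles precisely the ingredients made available here --- Proposition~\ref{proposition.spherical_pair_to_functor}, Corollary~\ref{corollary sph functor}, and Theorem~\ref{theorem.sph_pair}(\ref{theorem.sph_pair 2}) --- in the same way. Your careful matching of the half-open windows inside the closed window $\mathcal{G}^{[w,w+\eta]}$ to fix the indices $w$ and $w+1$ is the substantive point, and you correctly flag it as the place where one must unwind the definitions rather than quote.
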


\section{Examples: complex plane}
\label{section examples plane}

In this section we obtain  schobers on $(\mathbb{C},i\mathbb{Z})$ associated with GIT wall crossings and standard flops.

\subsection{Schober from wall crossing}\label{subsection construction}

The spherical pair of Theorem~\ref{theorem.sph_pair} was dependent on a choice of integer weight~$w$: the result below should be seen as gluing  the resulting spherical pairs for different $w$ to obtain a perverse schober on $(\mathbb{C},i\mathbb{Z})$.

\begin{theorem}\label{theorem schober} Take a simple balanced wall crossing for a toric GIT problem $V/G$ satisfying Assumption~\ref{assumption.sph_pair}, with GIT quotients~$X_{\pm}$. There exists a schober~$\cP$  on $(\mathbb{C},i\mathbb{Z})$ with

\begin{itemize}
\item\label{theorem schober 1} generic fibre $\D(X _{+} )$, and 
\item\label{theorem schober 2} monodromy around\, $iw \in i\mathbb{Z}$ given by $\twistFun^w$ from Notation~\ref{notation sph functor}. \end{itemize}

\begin{proof}Taking a skeleton $K^0$ below, construct a $K^0$-shadow $\cP_{K^0}$\ by associating to $iw$, for $w\in\mathbb{Z}$, the following spherical functor from Corollary~\ref{corollary sph functor}.
\[\sphFun^{w} \colon \D(Z/G)^w  \longrightarrow \D(X_{+}) \]

\begin{center}
\begin{tikzpicture}
 \node at (0,0) {$\skeletonForGIT{$K^0$}{$+1$}{$+i$}{$-i$}$};
\end{tikzpicture}
\end{center}

The braid group $\operatorname{Br}_\infty$ acts simply transitively on  the set of isotopy classes of finitary skeleta, so any such isotopy class $K$ is of the form $\sigma K^0$ for a unique~$\sigma\in\operatorname{Br}_\infty$. Then we may define a $K$-shadow (unique up to isomorphism) by applying the transformation rule for shadows from Definition~\ref{transformation for Kshadows} to~$\cP_{K^0}$, and verify compatibilities as in Definition~\ref{schober}.
\end{proof}
\end{theorem}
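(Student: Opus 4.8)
The plan is to assemble the data required by Definition~\ref{schober}, in the countably-singular form of Subsection~\ref{subsection inf schober on C}, and then read off the two asserted properties. First I would check that $(\mathbb{C},i\mathbb{Z})$ fits that framework: the set $i\mathbb{Z}$ has no accumulation points and is the image of the embedding $\mathbb{Z}\hookrightarrow\mathbb{C}$, $n\mapsto in$, which extends to an embedding of the vertical strip $\mathbb{R}\times[-1,1]\hookrightarrow\mathbb{C}$, $(s,t)\mapsto t+is$. Hence $\operatorname{Br}_\infty$ acts on $(\mathbb{C},i\mathbb{Z})$, simply transitively on the set of isotopy classes of finitary skeleta, and I would fix as reference skeleton $K^0$ the one given by a basepoint $x$ at $+1$ joined to each $iw$ by a simple arc, the arcs coinciding near $x$.

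Next I would construct a $K^0$-shadow. Assumption~\ref{assumption.sph_pair} and Theorem~\ref{theorem.sph_pair} are in force, so Corollary~\ref{corollary sph functor}, applied to $V^{\mathrm{ss}}(\linzn_0)/G$, supplies for every integer $w$ a spherical functor $\sphFun^w\colon\D(Z/G)^w\to\D(X_+)$. Define $\cP_{K^0}$ to have central category $\cD=\D(X_+)$, category $\D(Z/G)^w$ at the singular point $iw$, and spherical functor $\sphFun^w$ there (using the enhancement hypothesis of Subsection~\ref{subsection schober multiple}). For a general finitary skeleton $K$, write $K=\sigma K^0$ with $\sigma\in\operatorname{Br}_\infty$ unique and set $\cP_K:=f_\sigma\cP_{K^0}$ as in Definition~\ref{transformation for Kshadows}; the identifications $g_{K,\sigma}\colon f_\sigma\cP_K\xrightarrow{\sim}\cP_{\sigma K}$ are then the canonical isomorphisms $f_\tau(f_\sigma\cP_{K^0})\cong f_{\tau\sigma}\cP_{K^0}$.

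Finally I would extract the conclusions. The category assigned to the smooth basepoint $x$ is $\cD=\D(X_+)$, which is the generic fibre. For the monodromy, Definition~\ref{definition schober restriction} says that in the induced local system on $\mathbb{C}-i\mathbb{Z}$ the loop encircling $iw$, realised inside $K^0$, acts by the twist of the spherical functor sitting at $iw$, namely $\twistFun_{\sphFun^w}=\twistFun^w$ in the notation of Notation~\ref{notation sph functor}; by the remark there this is independent of the chosen skeleton.

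The step requiring genuine care is the verification, in the middle paragraph, that $\sigma\mapsto f_\sigma$ is an action of $\operatorname{Br}_\infty$ on shadows up to coherent natural isomorphism, so that the $g_{K,\sigma}$ are well defined and satisfy the triangle compatibility of Definition~\ref{schober}. All of the categorical input is already available: one needs the braid relation $f_{\sigma_i}f_{\sigma_{i+1}}f_{\sigma_i}\cong f_{\sigma_{i+1}}f_{\sigma_i}f_{\sigma_{i+1}}$ and the distant-commutation $f_{\sigma_j}f_{\sigma_k}\cong f_{\sigma_k}f_{\sigma_j}$ for $|j-k|>1$, which follow from Definition~\ref{transformation for Kshadows} together with the identity $\twistFun_{\Psi\circ\sphFun}\cong\Psi\circ\twistFun_\sphFun\circ\Psi^{-1}$ for an equivalence $\Psi$; given these, a word-reduction (coherence) argument for the Artin presentation of $\operatorname{Br}_\infty$ produces the identifications and their compatibilities. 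The infinitely-many-strands feature causes no trouble, since each individual compatibility to be checked involves only finitely many of the $\sigma_i$ and so takes place inside an ordinary $\operatorname{Br}_n$.
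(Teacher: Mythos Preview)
Your proposal is correct and follows essentially the same approach as the paper: fix a reference skeleton $K^0$, build the $K^0$-shadow from the spherical functors $\sphFun^w$ of Corollary~\ref{corollary sph functor}, and propagate to all finitary skeleta via the simply transitive $\operatorname{Br}_\infty$-action using Definition~\ref{transformation for Kshadows}. Your write-up is in fact more explicit than the paper's, spelling out why $(\mathbb{C},i\mathbb{Z})$ fits the framework of Subsection~\ref{subsection inf schober on C}, how the generic fibre and monodromies are read off via Definition~\ref{definition schober restriction}, and what the coherence check for the $g_{K,\sigma}$ actually entails.
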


The following is then immediate, using Proposition~\ref{proposition derived monodromy}.

\begin{corollary}\label{corollary schober}In the setting of Theorem~\ref{theorem schober}, the restriction of $\cP$  to $\mathbb{C}-i\mathbb{Z}$ is a local system  of categories as follows, with a refinement as indicated.
\end{corollary}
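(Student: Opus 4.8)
The plan is to read off the local system on $\mathbb{C} - i\mathbb{Z}$ directly from the schober $\cP$ constructed in Theorem~\ref{theorem schober}, using the general restriction construction of Definition~\ref{definition schober restriction}, and then to identify the resulting monodromies with the twists $\twistFun^w$ and their factorizations through the window equivalences $\Phi^w$. First I would fix the finitary skeleton $K^0$ from the proof of Theorem~\ref{theorem schober}, together with its $K^0$-shadow, whose spherical functors at $iw$ are the $\sphFun^w$ of Corollary~\ref{corollary sph functor}; by the remark following Definition~\ref{definition schober restriction}, the induced local system is independent of this choice up to isomorphism. The fundamental group $\pi_1(\mathbb{C} - i\mathbb{Z}, x)$ is free on loops $\gamma_w$ encircling $iw$ counter-clockwise inside $K^0$, and by Definition~\ref{definition schober restriction} the category assigned to the basepoint is $\D(X_+)$ with $\gamma_w$ acting by the twist $\twistFun_{\sphFun^w} = \twistFun^w$ (Notation~\ref{notation sph functor}). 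This already yields the first, single-basepoint presentation of the local system — the picture $\monodromiesStandardFlop{\Dres{X}{E}}{\dots}{\dots}{\dots}$-style diagram with fibre $\D(X_+)$ and monodromies $\twistFun^w$ — so that part is essentially immediate from the construction.

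Next I would produce the refinement, which uses two basepoints $x_\pm$ and the finer structure of the spherical pairs. For each $w$, Theorem~\ref{theorem.sph_pair} gives a spherical pair whose associated restriction-to-the-smooth-locus local system, via Definition~\ref{definition sph pair restriction}, has basepoint categories $\cQ_\pm = \D(X_\mp)$ and whose minimal counter-clockwise half-loops act by the window equivalences of Theorem~\ref{theorem.sph_pair}(\ref{theorem.sph_pair 2}), namely $\Phi^w \colon \D(X_-) \to \D(X_+)$ and $\Phi^{w+1} \colon \D(X_+) \to \D(X_-)$. The point is to glue these two-basepoint local systems, one around each singular point $iw$, into a single $\{x_\pm\}$-coordinatized local system on $\mathbb{C} - i\mathbb{Z}$ which refines the one above in the sense of Definition~\ref{definition refinement}. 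Compatibility of the gluing is exactly the statement that going once around $iw$ — that is, composing the two half-monodromies $\Phi^{w+1} \circ \Phi^w$ (or $\Phi^w \circ \Phi^{w+1}$, depending on which basepoint one lands back at) — recovers the full monodromy $\twistFun^w$; this is precisely Proposition~\ref{proposition derived monodromy}, $\twistFun^w \cong \Phi^w \circ \Phi^{w+1}$. Thus the refinement is the half-monodromy presentation $\halfMonodromiesStandardFlop{\D(X_+)}{\D(X_-)}{\Phi}$, alternating $\D(X_+)$ and $\D(X_-)$ along the singular locus.

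The main obstacle, and the place where care is genuinely needed, is checking that the per-point spherical-pair data glue \emph{coherently} along the whole imaginary axis rather than merely point by point: one must verify that the braid-group transformation rule of Definition~\ref{transformation for Kshadows}, used in Theorem~\ref{theorem schober} to define the $K$-shadows for all finitary $K$, is compatible with the spherical-pair refinements simultaneously at every $iw$, and that the identifications $g_{K,\sigma}$ intertwine the refined data. Concretely this reduces to the observations recorded in the remarks of Section~\ref{subsection schober multiple} — that $\twistFun_{\Psi \circ \sphFun} \cong \Psi \circ \twistFun_\sphFun \circ \Psi^{-1}$ for an equivalence $\Psi$, and that the spherical functor extracted from a spherical pair by Proposition~\ref{proposition.spherical_pair_to_functor} has twist given by $\cQ_+ \to \cQ_- \to \cQ_+$ — applied uniformly in $w$. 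Once these compatibilities are in place, isomorphism of the refinement with the restriction $\cP|_{\mathbb{C} - i\mathbb{Z}}$ follows from Proposition~\ref{proposition derived monodromy} together with the remark after Definition~\ref{definition sph pair restriction} identifying the spherical-pair local system as a refinement of the spherical-functor one. I would keep the argument short by invoking these earlier results rather than re-deriving the braid compatibilities, since they were already established in the proof of Theorem~\ref{theorem schober}.
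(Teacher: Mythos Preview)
Your proposal is correct and follows the same route as the paper, which dispatches the corollary in a single line: ``immediate, using Proposition~\ref{proposition derived monodromy}.'' Your first two paragraphs unpack exactly this --- Definition~\ref{definition schober restriction} gives the one-basepoint local system with monodromies $\twistFun^w$, and Proposition~\ref{proposition derived monodromy} supplies the factorization $\twistFun^w \cong \Phi^w \circ \Phi^{w+1}$ producing the two-basepoint refinement.

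Your third paragraph is more cautious than necessary. The refinement claim is a statement about \emph{local systems} in the sense of Definition~\ref{definition refinement}, not about schobers: you only need the $\{x_+,x_-\}$-coordinatized action on $\mathbb{C}-i\mathbb{Z}$ to restrict, under $\pi_1(\mathbb{C}-i\mathbb{Z},\{x_+\}) \subset \pi_1(\mathbb{C}-i\mathbb{Z},\{x_+,x_-\})$, to the $\{x_+\}$-coordinatized one. No braid-group transformation rules or identifications $g_{K,\sigma}$ enter here --- those live in the schober data on $(\mathbb{C},i\mathbb{Z})$, whereas the corollary concerns only the restriction to the smooth locus. Once the $\Phi^w$ are assigned to the half-paths, the only thing to check is that composing two adjacent half-paths around $iw$ gives $\twistFun^w$, and that is literally Proposition~\ref{proposition derived monodromy}. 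So you can safely drop the coherence discussion.
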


\begin{center}
\begin{tikzpicture}
 \node at (0,0) {$\monodromiesStandardFlop{$\D(X_+)$}{$\twistFun^{-1}$}{$\twistFun^{0}$}{$\twistFun^{1}$}$};
 \node at (6,0) {$\halfMonodromiesStandardFlop{$\D(X_+)$}{$\D(X_-)$}{\Phi}$};
\end{tikzpicture}
\end{center}

\begin{example}\label{example vgit} Take vector spaces $U_\pm$ with $\mathbb{C}^*$-actions having strictly positive weights with the same sum. Then $V=U_+ \oplus U_-^{\vee}$ is $\mathbb{C}^*$-equivariantly Calabi--Yau, and so the above applies. The quotients~$X_\pm$ are related by a flop exchanging orbifold weighted projective spaces~$\mathbb{P}U_+$ and~$\mathbb{P}U_-^{\vee}$.
\end{example}

\subsection{Schober for standard flop}

For a contraction $X\to Y$ let $E$ denote the exceptional locus, and recall that $\Dres{X}{E}$ is the full subcategory of $\D(X)$ whose objects have (set-theoretic) support on $E$.

\begin{proposition}\label{theorem.standard_flop} Take a standard flopping contraction $X\to Y$, so that in particular $E\cong \mathbb{P}^n$. Then the schober of Theorem~\ref{theorem schober} induces a schober $\cP$ on $(\mathbb{C},i\mathbb{Z})$ with 

\begin{itemize}
\item\label{theorem.standard_flop 1} generic fibre $\Dres{X}{E}$, and 
\item\label{theorem.standard_flop 2} monodromy around\, $iw \in i\mathbb{Z}$ given by the spherical twist $\twistFun^w$ of $\cO_E(w)$. 
\end{itemize}
\end{proposition}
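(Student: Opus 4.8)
The plan is to apply Theorem~\ref{theorem schober} to a specific toric GIT problem that realizes the standard flop, and then to identify the resulting schober with the one claimed. First I would invoke Example~\ref{example vgit}, or rather the local model underlying it: a standard flopping contraction $X \to Y$ with $E \cong \mathbb{P}^n$ and normal bundle $\cO_E(-1)^{\oplus(n+1)}$ arises, in a formal or analytic neighbourhood of $E$, from the toric wall crossing $V/G$ with $G = \mathbb{C}^*$ acting on $V = \mathbb{C}^{n+1} \oplus (\mathbb{C}^{n+1})^\vee$ with weights $(1,\dots,1,-1,\dots,-1)$; here $V$ is $\mathbb{C}^*$-equivariantly Calabi--Yau, so Assumption~\ref{assumption.sph_pair} holds and Theorem~\ref{theorem schober} produces a schober on $(\mathbb{C}, i\mathbb{Z})$ with generic fibre $\D(X_+)$ and monodromy $\twistFun^w$ around $iw$. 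The subtlety is that the quotients $X_\pm$ of this local model are the total spaces $\operatorname{Tot}(\cO_{\mathbb{P}^n}(-1)^{\oplus(n+1)})$, not $X$ itself; but the subcategory $\Dres{X}{E}$ of objects supported on $E$ only depends on this neighbourhood, so $\D(X_+|E) \cong \Dres{X}{E}$ and likewise on the other side.

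Next I would pass from the schober on $\D(X_+)$ to a schober on the subcategory supported on the exceptional locus. The key point is that the fixed locus $Z$ of $\lambda$ in this model is a point (modulo $G$), so $\D(Z/G)^w \cong \D(\operatorname{pt})$, and the spherical functor $\sphFun^w = \operatorname{res}_+ \circ \iota_-$ of Corollary~\ref{corollary sph functor} sends the generator of $\D(Z/G)^w$ to an object of $\D(X_+)$ that one identifies with $\cO_E(w)$ (pushed forward from $E$, twisted appropriately; the $\lambda$-weight bookkeeping fixes the twist). Its image therefore lands in $\Dres{X}{E}$, and the twist $\twistFun^w$ restricts to the spherical twist around $\cO_E(w)$ on $\Dres{X}{E}$ --- this is the standard fact that the Halpern-Leistner--Shipman twist for this VGIT is the Seidel--Thomas twist of the pushed-forward exceptional object. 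To make the restriction of the schober rigorous, I would check that each window equivalence $\Phi^w$ and each twist $\twistFun^w$ preserves the subcategory of objects set-theoretically supported on $E$ (equivalently, on the unstable stratum closure), which is clear since these functors are all Fourier--Mukai and fix the relevant support condition; hence every $K$-shadow in the schober of Theorem~\ref{theorem schober} restricts to a shadow of categories supported on $E$, and the braid-group compatibilities of Definition~\ref{schober} are inherited verbatim.

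Concretely, I would organize the proof as: (i) set up the local toric model and note $\D(X_\pm | E) \cong \Dres{X}{E} \cong \Dres{X'}{E'}$; (ii) observe that $Z/G \cong B\mathbb{C}^*$ so that $\D(Z/G)^w$ is generated by a single object whose image under $\sphFun^w$ is $\cO_E(w)$ up to the grading shift dictated by the window; (iii) note that the spherical twist of a spherical object in a triangulated category restricts to the subcategory it generates, and that $\twistFun^w$ restricted to $\Dres{X}{E}$ is precisely the spherical twist of $\cO_E(w)$; (iv) conclude that restricting every shadow $\cP_K$ of the schober of Theorem~\ref{theorem schober} to subcategories supported on $E$ gives a schober on $(\mathbb{C}, i\mathbb{Z})$ with the stated generic fibre and monodromy, the compatibilities being inherited.

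The main obstacle I anticipate is step (ii)--(iii): pinning down the precise object $\sphFun^w(\cO_Z) \in \D(X_+)$ and verifying that, after the correct twist, it equals $\cO_E(w)$, and that the ambient twist $\twistFun^w$ literally restricts to the Seidel--Thomas twist of this object on $\Dres{X}{E}$ rather than merely agreeing with it on some smaller subcategory. This requires careful tracking of $\lambda$-weights through the functor $\iota_- = j_*\pi^*$ of Lemma~\ref{lemma.iota_adjoints} and the restriction $\operatorname{res}_+$, and a lemma to the effect that for a spherical object $\cS$ in $\D(X_+)$ whose cone data is supported on $E$, the twist $\twistFun_{\cS}$ preserves $\Dres{X}{E}$ and acts there as the spherical twist of $\cS$ --- this last point is routine given the Fourier--Mukai description but is the crux of why the restricted collection of data genuinely forms a schober with the advertised monodromy.
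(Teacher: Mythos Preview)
Your proposal is correct and follows essentially the same route as the paper: set up the local toric model $V = U \oplus U^\vee$ with the scalar $\mathbb{C}^*$-action, apply Theorem~\ref{theorem schober}, identify $Z$ as the origin so that $\D(Z/G)^w \cong \D(\operatorname{pt})$ and $\sphFun^w$ hits $\cO_E(w)$, restrict to the support subcategory $\Dres{X_+}{E}$, and then pass to the general $X$ via the isomorphism of formal completions along $E$. The obstacle you flag is handled more lightly in the paper than you anticipate --- it simply invokes Corollary~\ref{corollary sph functor} for the identification of the twist and observes that since $\cO_E(w)$ lies in $\Dres{X_+}{E}$ the twists preserve that subcategory, without the more elaborate Fourier--Mukai support checks you sketch.
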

\begin{proof}
We first prove the result for a local model of the flopping contraction, denoted $X_\pm \to Y_0$, as follows. Take a vector space $U$ with projectivization~$E$, and let $V=U\oplus U^\vee$ with $\mathbb{C}^*$-action induced by the scalar action on $U$. The two GIT quotients $X_\pm$ are the two sides of the flop, and have flopping contractions to the affine quotient $Y_0 = \operatorname{Spec} \mathbb{C}[V]^{\mathbb{C}^*}$. The exceptional locus in $X_+$ is $E$, given by the quotient of the subspace $U\!\oplus 0$ of $V$, after removing the unstable point~$0$.

Now $V$ is smooth and Calabi--Yau, so Theorem~\ref{theorem schober} gives a perverse schober $\cP$ on $(\mathbb{C},i\mathbb{Z})$ with generic fibre $\D(X_+)$. The fixed locus $Z$ is $0$, so the categories $\D(Z/G)^w$ appearing in the proof are just $\D(\operatorname{pt})$. Using Corollary~\ref{corollary sph functor}, we find that the monodromy around $iw$ is the spherical twist by $\cO_E(w)$. This lies in $\Dres{X_+}{E}$, and therefore the monodromies preserve the latter category: we thus obtain a schober on $(\mathbb{C},i\mathbb{Z})$ with generic fibre $\Dres{X_+}{E}$, and with the required monodromy property.

It remains to construct a similar schober with generic fibre $\Dres{X}{E}$. As $X\to Y$ and $X_+\to Y_0$ are standard flopping contractions with shared exceptional locus $E$, the formal completions of $\widehat{X}$ and $\widehat{X}_+$ along $E$ are isomorphic, and so $\D(\widehat{X})\cong\D(\widehat{X}_+)$. The images of the restriction functors 
\[\Dres{X}{E} \longrightarrow \D(\widehat{X}) \qquad\text{ and }\qquad \Dres{X_+}{E} \longrightarrow \D(\widehat{X}_+) \]
coincide under this equivalence. We conclude $\Dres{X}{E} \cong\Dres{X_+}{E}$, where this last equivalence identifies objects $\cO_E(w)$, and the result follows.\end{proof}

The restriction of the schober $\cP$  to $\mathbb{C}-i\mathbb{Z}$ is a local system as follows.

\begin{center}
\begin{tikzpicture}
 \node at (0,0) {$\monodromiesStandardFlop{$\Dres{X}{E}$}{$\twistFun^{-1}$}{$\twistFun^{0}$}{$\twistFun^{1}$}$};
\end{tikzpicture}
\end{center}

\section{Examples: K\"ahler moduli space}
\label{section SKMS}

In this section we study $X\to Y$ a standard $3$-fold flopping contraction, that is a contraction of a single $(-1,-1)$-curve $E$. There is currently no accepted mathematical definition of the stringy K\"ahler moduli space (SKMS) in general. However, as a heuristic for this example, we take 
\[M = \mathbb{P}^1 - \{\text{$3$ points}\}.\]
This heuristic is discussed below: we then construct schobers on partial compactifications of $M$, and relate them to the previous section.

\subsection{Discussion}\label{sect discussion} The space~$M$ may be obtained, following Toda, as a quotient of a certain space of normalized Bridgeland central charges on $\Dres{X}{E}$, see~\cite[end of Section~5.2, Example]{TodaRes}.  This turns out to be isomorphic to~$\mathbb{C}-\mathbb{Z}$. We write $M$ for the quotient by the natural $\mathbb{Z}$-action, and $\overline{M}$ for the partial compactification corresponding to the orbit $\mathbb{Z} \subset \mathbb{C}$, as follows.
\begin{equation*}
\begin{tikzpicture}
	\node (CminusZ) at (0,0) {$\mathbb{C}-\mathbb{Z}$};
	\node (C) at (2,0) {$\mathbb{C}$};
	\node (M) at (0,-1.5) {$M$};
	\node (Mbar) at (2,-1.5) {$\overline{M}$};
	
	\draw[right hook->] (CminusZ) to  (C);
	\draw[right hook->] (M) to  (Mbar);

	\draw[->] (CminusZ) to (M);
	\draw[->] (C) to (Mbar);
\end{tikzpicture}
\end{equation*}
Consequently we may write
\[\overline{M} = \mathbb{P}^1 - \{\text{$2$ points}\} \qquad \text{and} \qquad M = \overline{M} - \{p\}.\]

\begin{remark}For discussion in the physics literature, see for instance work of Aspinwall~\cite[around Figure~2]{Asp}. The point~$p$ is referred to as the conifold point, following physics terminology.
\end{remark}

\begin{remark} As a warning, in the case where $X$ is the local model given by the total space of $\cO_E(-1)^{\oplus 2}$, $M$ may in fact be a double cover of the SKMS, as this $X$ and its flop happen to be isomorphic. For discussion, see joint work of the author and E.~Segal~\cite{DonSeg2} where $M$ appears as the Fayet--Iliopoulos parameter space, in particular \cite[Remark~2.8]{DonSeg2}.\end{remark}

\subsection{Schober on complex plane}\label{subsection schober on C} The contraction $X\to Y$ yields a schober on $(\mathbb{C},i\mathbb{Z})$ with generic fibre $\Dres{X}{E}$, using Proposition~\ref{theorem.standard_flop}. Changing conventions, we may take a schober~$\cP$ on~$(\mathbb{C},\mathbb{Z})$ with monodromy~$\twistFun^{w}$ around the integer~$w+1$, as in Figure~\ref{figure per sheaf eg full}.

\begin{figure}[htb]
\begin{center}
\begin{tikzpicture}
 \node at (0,0) {$\monodromiesCentralCharge{$\Dres{X}{E}$}{$\twistFun^{0}$}{$\twistFun^{-1}$}{$\twistFun^{-2}$}$};
\end{tikzpicture}
\end{center}
\caption{Monodromies for schober $\cP$.}\label{figure per sheaf eg full}
\end{figure}

\begin{notation} Write $\cP^\circ$ for the restriction of $\cP$ to $\mathbb{C}-\mathbb{Z}$.\end{notation}

\subsection{Schober on partial compactification $\overline{M}$} To prepare to construct such a schober, in Theorem~\ref{theorem schober SKMS}, we construct a spherical pair: this is a variant of a result of Bodzenta and Bondal~\cite{BB}, specialized to a standard $3$-fold flop.

\begin{proposition}\label{proposition.standard_flop_pair} Take a standard $3$-fold flopping contraction $X_+\to Y$, with flop $X_-$, and exceptional curves $E_\pm$. There exists a spherical pair $\cS$ determined by
\[ \big\langle \D(X_{-}|E_-)\,,\, \D(\operatorname{pt}) \,\big\rangle = {\cS}_0 = \big\langle \D(X_{+}|E_+)\,,\, \D(\operatorname{pt}) \,\big\rangle\]
inducing the flop functors\, $\mathsf{F}$ of Bondal--Orlov~\cite{BO} as half-monodromies, where ${\cS}_0$ is defined in the proof.

\begin{proof}We show this first for a local model. Consider, as in the proof of Proposition~\ref{theorem.standard_flop}, the GIT quotient $V/\mathbb{C}^*$, with $V = U \oplus U^\vee$ where $U=\mathbb{C}^2$ and the $\mathbb{C}^*$-action is induced by the scalar action of $\mathbb{C}^*$ on $U$. Write $X^{\text{loc}}_\pm$ for the associated quotients. Then Theorem~\ref{theorem.sph_pair} gives a spherical pair 
\begin{equation}\label{equation atiyah pair} \big\langle \D(X^{\text{loc}}_{-})\,,\, \D(\operatorname{pt}) \,\big\rangle = \mathcal{P}_0 = \big\langle \D(X^{\text{loc}}_{+})\,,\, \D(\operatorname{pt}) \,\big\rangle\end{equation}
where we let $w=-1$, so that
\[ \mathcal{P}_0 = \left\{ \begin{array}{c|c} F^{\bullet} \in \D\!\big(\,V/\mathbb{C}^*\,\big) & \operatorname{wt} F^{\bullet} \subseteq [-1,+1] \end{array} \right\}. \]
Half-monodromy equivalences for this spherical pair, from Theorem~\ref{theorem.sph_pair}(\ref{theorem.sph_pair 2}), coincide with the flop functors by, for instance \cite[Proposition~2.3]{DonSeg2}. The categories $\D(\operatorname{pt})$ embed in $\mathcal{P}_0$ by taking $\cO_{\operatorname{pt}}$ to $\cO_{U\! \oplus 0}(-1)$ and $\cO_{0 \oplus U^\vee}(+1)$ respectively, according to Lemma~\ref{lemma.iota_adjoints}, where $(\pm 1)$ denote $\mathbb{C}^*$-weights. Define $\mathcal{S}_0$ as the full subcategory  of $\mathcal{P}_0$ with objects supported on the union of $U\! \oplus 0$ and $0 \oplus U^\vee$. Restricting the semi-orthogonal decompositions from~(\ref{equation atiyah pair}) to $\mathcal{S}_0$ gives, for the local model, a spherical pair with the required properties.
 
Finally, note that for a flop $X_{\pm}$ as in the statement, we have \[\D(X_{\pm}|E_\pm) \cong \D(X^{\text{loc}}_{\pm}|E^{\text{loc}}_\pm)\] by the argument of Proposition~\ref{theorem.standard_flop}, and that these equivalences intertwine the flop functors. The result follows.
\end{proof}
\end{proposition}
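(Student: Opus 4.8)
\medskip

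\noindent\textbf{Proof proposal.} The plan is to reduce to the local model of the $3$-fold flop and there obtain $\cS$ by restricting the spherical pair of Theorem~\ref{theorem.sph_pair} to the subcategory of objects supported on the exceptional loci. So first I would fix the local model as in the proof of Proposition~\ref{theorem.standard_flop}: put $U=\mathbb{C}^2$, $V=U\oplus U^\vee$ with the diagonal scaling $\mathbb{C}^*$-action, and write $X^{\mathrm{loc}}_\pm$ for the two GIT quotients, with exceptional curves $E^{\mathrm{loc}}_\pm\cong\mathbb{P}^1$ over the vertex of the conifold $Y_0$. Since $V$ is smooth and $\mathbb{C}^*$-equivariantly Calabi--Yau, Assumption~\ref{assumption.sph_pair} holds, the common window width is $\eta=2$, and Theorem~\ref{theorem.sph_pair} with $w=-1$ supplies a spherical pair $\cP$ with
\[ \big\langle \D(X^{\mathrm{loc}}_-)\,,\, \D(\operatorname{pt}) \big\rangle = \mathcal{P}_0 = \big\langle \D(X^{\mathrm{loc}}_+)\,,\, \D(\operatorname{pt}) \big\rangle, \]
where $\mathcal{P}_0\subseteq\D(V/\mathbb{C}^*)$ is the weight-$[-1,1]$ window and the two copies of $\D(\operatorname{pt})$ are embedded, via the functors of Lemma~\ref{lemma.iota_adjoints}, as the structure sheaves of the two coordinate $2$-planes $U\oplus 0$ and $0\oplus U^\vee$ in their appropriate $\mathbb{C}^*$-weights.

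The main step is to cut $\cP_0$ down by support. Let $E_0\subset V$ be the union of the two coordinate $2$-planes, which is the preimage in $V^{\mathrm{ss}}(\linzn_0)$ of $E^{\mathrm{loc}}_+$, and also of $E^{\mathrm{loc}}_-$, and let $\cS_0\subseteq\cP_0$ be the full subcategory of objects supported on $E_0$. I would check that both semi-orthogonal decompositions of $\cP_0$ restrict to $\cS_0$, with $\D(X^{\mathrm{loc}}_\pm)\cap\cS_0\cong\D(X^{\mathrm{loc}}_\pm|E^{\mathrm{loc}}_\pm)$. There are two ingredients: first, the embedded $\D(\operatorname{pt})$-factors already lie in $\cS_0$, being generated by sheaves on the $2$-planes; second, for $F$ in the relevant window one has $\operatorname{supp}F\subseteq E_0$ if and only if $\operatorname{res}_\pm F$ is supported on $E^{\mathrm{loc}}_\pm$, the content of which is that if, say, $\operatorname{res}_- F$ is supported on $E^{\mathrm{loc}}_-$ then $\operatorname{supp}F$ cannot meet the common semistable locus $\{u\neq 0,\ \phi\neq 0\}$, hence sits inside $\{u=0\}\cup\{\phi=0\}=E_0$. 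Granting this, the $\cS_0$-component of each defining triangle stays in $\cS_0$, so the decompositions restrict; and since the defining equivalences of a spherical pair (Definition~\ref{definition sph pair}) are built from the corresponding embeddings and projections, they restrict as well. The induced half-monodromies are then the restrictions of the window equivalences $\Phi^{-1}$, $\Phi^{0}$ of Theorem~\ref{theorem.sph_pair}(\ref{theorem.sph_pair 2}), which are the Bondal--Orlov flop functors, for instance by \cite[Proposition~2.3]{DonSeg2}. This yields the desired spherical pair for the local model, with $\cS_0$ as above.

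Finally I would globalize exactly as in Proposition~\ref{theorem.standard_flop}: for a general standard $3$-fold flopping contraction the formal neighbourhood of $E_\pm$ in $X_\pm$ is isomorphic to that of $E^{\mathrm{loc}}_\pm$ in $X^{\mathrm{loc}}_\pm$, so that $\D(X_\pm|E_\pm)\cong\D(X^{\mathrm{loc}}_\pm|E^{\mathrm{loc}}_\pm)$; these equivalences intertwine the flop functors, which are determined by the common formal neighbourhood, and transporting $\cS_0$ along them produces $\cS$. I expect the main obstacle to be the second ingredient above: identifying precisely which support conditions on $V/\mathbb{C}^*$, $X^{\mathrm{loc}}_+$ and $X^{\mathrm{loc}}_-$ match up, so that the two a priori different restricted decompositions live on the \emph{same} subcategory $\cS_0$, together with tracking the $\mathbb{C}^*$-weights that pin down the restricted $\D(\operatorname{pt})$-factors.
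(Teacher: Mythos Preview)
Your proposal is correct and follows essentially the same route as the paper: both construct the local spherical pair from Theorem~\ref{theorem.sph_pair} with $w=-1$, define $\cS_0$ as the full subcategory of $\cP_0$ supported on the union of the two coordinate $2$-planes, restrict the semi-orthogonal decompositions, identify the half-monodromies with the Bondal--Orlov flop functors via \cite[Proposition~2.3]{DonSeg2}, and globalize by the formal-neighbourhood argument of Proposition~\ref{theorem.standard_flop}. In fact you supply more detail than the paper on why the decompositions restrict to $\cS_0$; the paper simply asserts this step.
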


\begin{remark} Bodzenta and Bondal~\cite{BB} construct spherical pairs for flops of curves in much greater generality: these spherical pairs are determined by a category with a geometric description, in terms of the birational roof of the flop, which could be adapted to the setting of the proposition above. \end{remark}

The following is standard, and is used below. 

\begin{proposition}\label{proposition comparison} Take a standard $3$-fold flopping contraction $X_+ \to Y$, with flop $X_-$, and exceptional curves $E_\pm$. Then the flop-flop functor $\mathsf{F}\mathsf{F}$ acting on $D(X_\pm)$ and the twist by $\cO_{E_\pm}(-1)$ are inverse.
\end{proposition}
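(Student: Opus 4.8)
The plan is to reduce, as in the proofs of Proposition~\ref{theorem.standard_flop} and Proposition~\ref{proposition.standard_flop_pair}, to the Atiyah flop local model $V/\mathbb{C}^*$ with $V = U \oplus U^\vee$ and $U = \mathbb{C}^2$, since the relevant categories $\D(X_\pm | E_\pm)$ and the flop functors depend only on the formal neighbourhood of $E_\pm$, and are identified with their local counterparts by the argument already used above. So it suffices to establish the statement for the standard three-fold Atiyah flop, where $E_\pm \cong \mathbb{P}^1$ is a $(-1,-1)$-curve.

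First I would fix notation: let $\flop\colon \D(X_+) \to \D(X_-)$ and $\flopback\colon \D(X_-) \to \D(X_+)$ be the Bondal--Orlov flop functors, given by Fourier--Mukai transforms along the structure sheaf of the fibre product $X_+ \times_Y X_-$, so that $\flopflop = \flopback \circ \flop$ acts on $\D(X_+)$. The claim is the identity $\flopflop \cong \twistFun_{\cO_{E_+}(-1)}^{-1}$, equivalently $\flopflop \circ \twistFun_{\cO_{E_+}(-1)} \cong \Id$. I would then invoke the standard computation of the flop--flop autoequivalence for a $(-1,-1)$-curve: by work of Bondal--Orlov and Bridgeland (and recorded in many places, e.g.\ via the spherical twist description), $\flopflop$ is the inverse of the spherical twist around the spherical object $\cO_{E_+}(-1)$ — note $\cO_{E_+}(-1)$ is a spherical object in $\D(X_+)$ precisely because $E_+$ is a $(-1,-1)$-curve, so $\Ext^\bullet_{X_+}(\cO_{E_+}(-1),\cO_{E_+}(-1)) = \field \oplus \field[-3]$, the cohomology of $S^3$. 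Concretely, one computes the Fourier--Mukai kernel of $\flopflop$ on $X_+ \times X_+$ as the derived tensor of the two graph-of-flop kernels over $X_-$, and identifies the resulting object with the cone (shifted appropriately) defining the inverse twist of $\cO_{E_+}(-1)$; this is the content of the cited results.

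Alternatively — and this is the route I would actually take, to keep the proof self-contained within the paper — I would deduce it from Proposition~\ref{proposition derived monodromy} together with Proposition~\ref{proposition.standard_flop_pair}. In the local model the half-monodromy equivalences of the spherical pair $\cS$ from Proposition~\ref{proposition.standard_flop_pair} are identified with the flop functors $\flop$, $\flopback$, and the full monodromy $\twistFun^w$ (here with $w = -1$) is by Proposition~\ref{proposition derived monodromy} the composite $\Phi^w \circ \Phi^{w+1}$ of the two window equivalences, i.e.\ precisely $\flopback \circ \flop = \flopflop$; on the other hand, by Corollary~\ref{corollary sph functor} and the computation in the proof of Proposition~\ref{theorem.standard_flop}, this same twist $\twistFun^{-1}$ is the spherical twist of $\cO_Z(w) = \cO_{\operatorname{pt}}$ pushed forward, which restricted to $\Dres{X_+}{E_+}$ is the spherical twist of $\cO_{E_+}(-1)$. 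One must be careful about the direction of the arrows around the singular point: the twist appearing as a monodromy of the schober is $\twistFun_\sphFun = \operatorname{\sf Cone}(\sphFun \circ \sphFun^{\Radj} \to \Id)$, whereas the flop--flop is the composite of the two half-monodromies traversing the loop in the \emph{opposite} orientation, which is the inverse; tracking this sign is the only real subtlety, and it is exactly what produces the word ``inverse'' in the statement. The main obstacle, then, is purely bookkeeping: pinning down the orientation conventions so that the half-monodromy composite $\flopflop$ matches $\twistFun^{-1}$ rather than its inverse, and checking that the identification $\Dres{X_\pm}{E_\pm} \cong \Dres{X^{\text{loc}}_\pm}{E^{\text{loc}}_\pm}$ from Proposition~\ref{theorem.standard_flop} carries $\cO_{E_\pm}(-1)$ to $\cO_{E^{\text{loc}}_\pm}(-1)$ and intertwines all the functors in sight — both of which have already been verified in the preceding proofs, so little new work is required.
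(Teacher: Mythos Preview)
The paper's own proof is a single sentence: it cites Toda's computation \cite[Theorem~3.1]{TodaSph}, notes the sign correction in \cite[Appendix~B]{TodaGV}, and remarks that the argument extends from the projective to the quasi-projective setting by \cite{DW1,DW3}. Your first paragraph (reduce to the local model, then invoke the known computation of the flop--flop kernel) is essentially the same strategy, only with different citations; that part is fine.

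Your second, ``self-contained'' route is a genuinely different idea, and it has a real gap rather than mere bookkeeping. You combine Proposition~\ref{proposition derived monodromy}, which gives $\twistFun^{w}\cong\Phi^{w}\circ\Phi^{w+1}$, with Proposition~\ref{proposition.standard_flop_pair}, which identifies the half-monodromies of the spherical pair with Bondal--Orlov flop functors. Taken at face value these two statements yield $\twistFun^{-1}\cong\flopflop$, i.e.\ that the twist by $\cO_{E_+}(-1)$ and the flop--flop are \emph{equal}, not inverse. You then invoke an ``opposite orientation'' to flip this, but Proposition~\ref{proposition.spherical_pair_to_functor} says explicitly that the twist \emph{is} the counterclockwise composite $\cQ_+\to\cQ_-\to\cQ_+$, the same composite you have just identified with $\flopflop$; there is no opposite traversal available to appeal to. The discrepancy is precisely the content of the proposition, and resolving it requires knowing exactly which window equivalence equals the Bondal--Orlov functor and which equals its inverse (or a line-bundle twist thereof) --- information that \cite[Proposition~2.3]{DonSeg2} supplies for one window but that you would need to track carefully for both $\Phi^{-1}$ and $\Phi^{0}$, and which you have not done.

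In short: the citation approach is correct and matches the paper; the internal-derivation approach, as written, proves the wrong statement and the fix is not the orientation hand-wave you suggest. This is also why the paper reaches outside to Toda's result rather than attempting to extract it from the schober machinery already in place.
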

\begin{proof}This follows by work of Toda~\cite[Theorem~3.1]{TodaSph}, noting a sign correction in \cite[Appendix~B]{TodaGV}: these references take $X$ projective, but the methods extend to the quasi-projective case, see for instance~\cite{DW1,DW3}.
\end{proof}

We make the following assumption to prove Theorem~\ref{theorem schober SKMS} below.

\begin{assumption}\label{divisor assumption} Assume there exist line bundles $\cL_\pm$ on $X_\pm$  such that:
\begin{enumerate}
\item\label{divisor assumption 1} the restriction of $\cL_\pm$ to $E_\pm$ is isomorphic to $\cO_{\mathbb{P}^1}(1)$;
\item\label{divisor assumption 2} the restrictions of $\cL_-$ and $\cL_+$ to $X_- -E_- \cong X_+-E_+ $ are dual.
\end{enumerate}
\end{assumption} 

\begin{remark} This assumptions holds in the local model where $X_+$ and its flop $X_-$ are isomorphic to the total space of the bundle $\cO_{\mathbb{P}^1}(-1)^{\oplus 2}$: we may take the pullback of $\cO_{\mathbb{P}^1}(1)$. However, it may fail in general. \end{remark}

We now construct a schober on $\overline{M}$, and relate it to the schober $\cP$ on $\mathbb{C}$  from Subsection~\ref{subsection schober on C}. We let $\overline{M} = \mathbb{P}^1 - \{\pm 1\}$, and define $g\colon \mathbb{C} \to \overline{M}$ taking
\[z \mapsto ({1 - e^{2\pi i z}})/({1 + e^{2\pi i z}})\]
considered as a point of $\mathbb{P}^1$ in the natural way. We let $M = \overline{M} - \{ 0 \}. $

\begin{remark} This $g$ is modelled on the quotient map from Subsection~\ref{sect discussion}.
\end{remark}

\begin{theorem}\label{theorem schober SKMS} Take a standard $3$-fold flopping contraction $X_+\to Y$, with flop $X_-$, and exceptional curves $E_\pm$, satisfying Assumption \ref{divisor assumption}. There exists a schober $\cQ$ on $(\overline{M}, \{0\})$, 
restricting to a local system $\cQ^\circ$ on ${M}$, such that:
\begin{enumerate}
\item\label{theorem schober SKMS 1} the local system  $\cQ^\circ$ is as follows;
\begin{center}
\begin{tikzpicture}
 \node at (0,0) {$\monodromiesSKMS{$\Dres{X_+}{E_+}$}{$\Dres{X_-}{E_-}$}{$\otimes\cL_+$}{$\otimes\cL_-$}{$\flop$}{1}{0}{$p$}$};
\end{tikzpicture}
\end{center}
\item\label{theorem schober SKMS 3} the data associated to $0$ in $\overline{M}$ is the category ${\mathcal{S}}_0$ of Proposition~\ref{proposition.standard_flop_pair}.
\end{enumerate}
\end{theorem}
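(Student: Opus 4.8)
The plan is to realise $\cQ$ as a \emph{schober of spherical pair type} on $(\overline{M},\{0\})$ and then invoke the extension mechanism of Section~\ref{section schobers riemann}. Fix a small disk $\Delta\subset\overline{M}$ around the conifold point $0$, place at $0$ the spherical pair $\cS$ of Proposition~\ref{proposition.standard_flop_pair}, and build on $M=\overline{M}-\{0\}$ a local system $\cQ^\circ$ agreeing with $\cS$ on $\partial\Delta$; then Proposition~\ref{proposition schober pair extend} produces the schober, its restriction to $M$ recovers $\cQ^\circ$ by Proposition~\ref{proposition schober extend}(\ref{proposition schober extend 2}), and its fibre data at $0$ is the underlying category $\cS_0$ of $\cS$ — giving~(\ref{theorem schober SKMS 3}) immediately and reducing~(\ref{theorem schober SKMS 1}) to exhibiting $\cQ^\circ$ with the stated monodromies. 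By Proposition~\ref{proposition.spherical_pair_to_functor}, the local system that $\cS$ induces on $\partial\Delta$ carries $\cQ_\pm=\Dres{X_\pm}{E_\pm}$ at two basepoints $x_\pm$, has half-monodromies the Bondal--Orlov flop functors, and has monodromy around $0$ equal to the twist of the associated spherical functor $\D(\operatorname{pt})\to\Dres{X_+}{E_+}$, $\operatorname{pt}\mapsto\cO_{E_+}(-1)$; by Proposition~\ref{proposition comparison} this twist is $\flopflop^{-1}$.

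Next I would construct $\cQ^\circ$. Choose basepoints $x_\pm$ in $M$ on opposite sides of $0$, each near one of the two punctures of $\overline{M}$, so that the fundamental groupoid $\pi_1(M,\{x_\pm\})$ is free on a path $\phi\colon x_-\to x_+$ running past $0$ together with loops $\gamma_\pm$ around the two punctures based at $x_\pm$. I assign $\Dres{X_\pm}{E_\pm}$ to $x_\pm$, let $\phi$ act by the flop functor $\flop$ restricted to these supported subcategories (as in Proposition~\ref{proposition.standard_flop_pair}), and let $\gamma_\pm$ act by $\otimes\,\cL_\pm$, using Assumption~\ref{divisor assumption}. Since the groupoid is free on these generators, this defines a triangulated local system $\cQ^\circ$; its monodromies around the two punctures are $\otimes\,\cL_\pm$ by construction, and around $0$ it is the forced composite of these (via the relation in $\pi_1$ of the thrice-punctured sphere), a flop--flop-type autoequivalence — matching the displayed diagram in~(\ref{theorem schober SKMS 1}), the two half-monodromies around $0$ being $\flop$ and the flop-type functor on the return path supplied by $\cS$.

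The crux is the compatibility on $\partial\Delta$. The categories and the half-monodromy $\phi=\flop$ already agree with $\cS$; what must be checked is that the monodromy around $0$ determined by $\cQ^\circ$ equals the one determined by $\cS$. For $\cS$ this is $\flopflop^{-1}$ by the above; for $\cQ^\circ$ the sphere relation forces it to be the composite of $\otimes\,\cL_+$ with the $\flop$-conjugate of $\otimes\,\cL_-$. Thus the whole proof reduces to an identity of autoequivalences of $\Dres{X_+}{E_+}$ relating the flop--flop functor, the flop functor, and the line-bundle twists by $\cL_\pm$, of the shape
\[ \flopflop \;\cong\; (\otimes\,\cL_+)\circ\flop\circ(\otimes\,\cL_-)\circ\flop^{-1} \]
up to the orientation and ordering conventions fixed above. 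I expect verifying this identity to be the main obstacle, and the place where Assumption~\ref{divisor assumption} genuinely enters. Since all the functors involved are intertwined by the equivalences $\Dres{X_\pm}{E_\pm}\cong\Dres{X^{\mathrm{loc}}_\pm}{E^{\mathrm{loc}}_\pm}$ with the Atiyah-flop local model of Proposition~\ref{proposition.standard_flop_pair} (for the twists one also uses that $\cL_\pm$ and their local counterparts agree in the Picard group of the formal completion along $E_\pm$, both restricting to $\cO_{\mathbb{P}^1}(1)$ by Assumption~\ref{divisor assumption}(\ref{divisor assumption 1})), it suffices to verify the identity in the local model; there the supported category is generated by the line bundles $\cO_E(k)$, the flop functor, window equivalences, and line-bundle twists act explicitly on these generators, and Assumption~\ref{divisor assumption}(\ref{divisor assumption 1})--(\ref{divisor assumption 2}) — $\cL_\pm$ having degree $1$ on $E_\pm$ and being dual off the exceptional loci — is exactly what makes the computation close up.

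With this identity in hand, $\cS$ and $\cQ^\circ$ induce strongly isomorphic local systems on $\partial\Delta$, so Proposition~\ref{proposition schober pair extend} gives a schober of spherical pair type on $(\overline{M},\{0\})$, hence the desired schober $\cQ$: its restriction to $M$ is isomorphic to $\cQ^\circ$, proving~(\ref{theorem schober SKMS 1}), and the category it assigns to $0$ is $\cS_0$, proving~(\ref{theorem schober SKMS 3}). Finally I would note that this construction is designed to be compatible, after pulling back along the covering $g\colon\mathbb{C}\to\overline{M}$, with the schober $\cP$ on $(\mathbb{C},\mathbb{Z})$ of Subsection~\ref{subsection schober on C} — the deck shift acting on the fibres of $\cP$ by $\otimes\,\cL$ — which is the content of the subsequent Proposition~\ref{proposition schober SKMS}.
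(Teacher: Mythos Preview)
Your overall architecture matches the paper's: build the local system $\cQ^\circ$ on $M$ with the prescribed monodromies, then extend over $0$ by the spherical pair $\cS$ of Proposition~\ref{proposition.standard_flop_pair} via Proposition~\ref{proposition schober pair extend}. The paper frames the consistency check slightly differently --- it declares \emph{both} half-monodromies around $0$ to be $\flop$ and then verifies the single relation in $\pi_1(M)$ (that the loop encircling the two punctures $\pm 1$, i.e.\ the loop ``around infinity'' in $\mathbb{P}^1$, is contractible) by showing
\[
\flop^{-1}\circ(\placeholder\otimes\cL_-)\circ\flop^{-1}\circ(\placeholder\otimes\cL_+)\ \cong\ \Id
\]
--- but this is equivalent to your formulation with a single path $\phi$, and both reduce to the same identity relating $\flop$, $\flop^{-1}$, and the twists by $\cL_\pm$.

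The one substantive difference is how that identity is established. The paper proves it directly on $X_\pm$ (Proposition~\ref{proposition flop and inverse}): writing $\flop=p_{-*}p_+^*$ through the common blow-up $\widehat{X}\to X_\pm$, one has $\flop^{-1}=p_{+*}(\omega_{p_-}\otimes\,\placeholder)\,p_-^*$, and Assumption~\ref{divisor assumption} is used precisely to identify $p_-^*\cL_-^\vee\otimes p_+^*\cL_+^\vee\cong\cO_{\widehat{X}}(\widehat{E})=\omega_{p_-}$; the projection formula then gives $\flop^{-1}\cong(\placeholder\otimes\cL_+^\vee)\circ\flop\circ(\placeholder\otimes\cL_-^\vee)$ as an isomorphism of Fourier--Mukai functors. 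This is cleaner than your proposed reduction to the local model and computation on the generators $\cO_E(k)$: that route can be made to work, but you would still need to upgrade agreement on objects to a natural isomorphism of functors (e.g.\ by tracking kernels), whereas the roof argument delivers the functor isomorphism in one stroke and makes transparent exactly where each part of Assumption~\ref{divisor assumption} enters.
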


\begin{remark} Taking Grothendieck groups gives a local system on~$M$, which should relate to known ways to parallel transport $K$-groups, for instance as in work of Coates, Iritani, and~Jiang~\cite{CIJ}: see in particular~\cite[Figure~3]{CIJ}.
\end{remark}

\begin{proof}[Proof of Theorem~\ref{theorem schober SKMS}]
Take basepoints $x_\pm=g(\pm i)$ in $M$, and choose $\Delta$ in~$\overline{M}$ as below, with $x_\pm$ on its boundary.

\begin{center}
\begin{tikzpicture}
 \node at (0,0) {$\monodromiesSKMSpair{$x_+$}{$x_-$}{$\otimes\cL_+$}{$\otimes\cL_-$}{$\flop$}{1}{$\Delta$}$};
\end{tikzpicture}
\end{center}

We associate $\Dres{X_\pm}{E_\pm}$ to $x_\pm$, and claim that the diagram of functors above gives a local system $\cQ^\circ$ on $M$. We need only check that the following monodromy around infinity is trivial:
\[ \flop^{-1} \compose \big( \!\placeholder\otimes\,\cL_-\big) \compose \flop^{-1}  \compose \big( \!\placeholder\otimes\,\cL_+\big). \]
This is indeed isomorphic to the identity, by Proposition~\ref{proposition flop and inverse} below, using Assumption \ref{divisor assumption}. We now apply Proposition~\ref{proposition schober pair extend} to extend $\cQ^\circ$ over $0 \in \overline{M}$. We need a spherical pair on $\Delta$ which induces the same local system on $\partial \Delta$ as $\cQ^\circ$: this is supplied by Proposition~\ref{proposition.standard_flop_pair}, and we are done.\end{proof}

The following proposition is standard.

\begin{proposition}\label{proposition flop and inverse} For a standard $3$-fold flop as in  Theorem~\ref{theorem schober SKMS}, satisfying Assumption~\ref{divisor assumption}, we have isomorphic functors $\Dres{X_-}{E_-} \to \Dres{X_+}{E_+}$:
\[ \flop^{-1} \cong \big( \!\placeholder\otimes\,\cL_+^\vee\big) \compose \flop \compose \big( \!\placeholder\otimes\,\cL_-^\vee\big) \] \end{proposition}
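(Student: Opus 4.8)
The plan is to reformulate the claim via the flop--flop identity of Proposition~\ref{proposition comparison} and then run a Fourier--Mukai computation on a common resolution; since the statement is ``standard'', one may alternatively cite the interaction of $3$-fold flop functors with line bundle twists from work of Toda or Donovan--Wemyss and combine it with Proposition~\ref{proposition comparison}. In detail, write $\flop\colon\Dres{X_+}{E_+}\to\Dres{X_-}{E_-}$ and $\flopback\colon\Dres{X_-}{E_-}\to\Dres{X_+}{E_+}$ for the two flop functors (both written $\flop$ in the statement), so that $\flopback\compose\flop\cong\flopflop$ on $\Dres{X_+}{E_+}$. Precomposing the asserted isomorphism with the equivalence $\flop$ and postcomposing with $\placeholder\otimes\cL_+$, it is equivalent to $\flopback\compose(\placeholder\otimes\cL_-^\vee)\compose\flop\cong\placeholder\otimes\cL_+$; and since $\flopback\cong\flopflop\compose\flop^{-1}$, while $\flopflop^{-1}\cong\twistFun_{\cO_{E_+}(-1)}$ by Proposition~\ref{proposition comparison}, this reduces in turn to
\[\flop^{-1}\compose(\placeholder\otimes\cL_-^\vee)\compose\flop\;\cong\;\twistFun_{\cO_{E_+}(-1)}\compose(\placeholder\otimes\cL_+)\qquad\text{on }\Dres{X_+}{E_+},\]
that is: conjugation of the twist by $\cL_-^\vee$ along the flop differs from the twist by $\cL_+$ exactly by the spherical twist of $\cO_{E_+}(-1)$.

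Next I would pass to the common resolution. As the flopped curves are $(-1,-1)$-curves, the blowups $\widetilde X:=\operatorname{Bl}_{E_+}X_+\cong\operatorname{Bl}_{E_-}X_-$ coincide, giving a smooth variety with proper birational contractions $q_\pm\colon\widetilde X\to X_\pm$ and common exceptional divisor $D\cong\mathbb{P}^1\times\mathbb{P}^1$, whose two rulings are the fibres of $q_+|_D$ and of $q_-|_D$; alternatively one reduces first to the local model $X_\pm=\operatorname{Tot}_{\mathbb{P}^1}(\cO(-1)^{\oplus2})$ with $\cL_\pm$ the pullback of $\cO_{\mathbb{P}^1}(1)$, as in the proof of Proposition~\ref{theorem.standard_flop}. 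The key input is the Picard identity $q_-^*\cL_-^\vee\otimes q_+^*\cL_+^\vee\cong\cO_{\widetilde X}(D)$: the left side is trivial on $\widetilde X-D\cong X_+-E_+\cong X_--E_-$ by Assumption~\ref{divisor assumption}(\ref{divisor assumption 2}), hence is $\cO_{\widetilde X}(kD)$ for a unique $k$; restricting to $D$ gives $\cO_{\mathbb{P}^1\times\mathbb{P}^1}(-1,-1)$ by Assumption~\ref{divisor assumption}(\ref{divisor assumption 1}), while $\cO_{\widetilde X}(D)|_D=N_{D/\widetilde X}=\cO_{\mathbb{P}(N_{E_+/X_+})}(-1)\cong\cO_{\mathbb{P}^1\times\mathbb{P}^1}(-1,-1)$, forcing $k=1$. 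Since $q_\pm$ also has exceptional divisor $D$ and $Rq_{+*}\cO_{\widetilde X}(D)\cong\cO_{X_+}$ (from $0\to\cO_{\widetilde X}\to\cO_{\widetilde X}(D)\to\cO_D(D)\to0$, as $Rq_{+*}\cO_D(D)$ vanishes fibrewise over $E_+$), the projection formula already gives the ``backbone'' $\flopback(\cL_-^\vee)=Rq_{+*}(q_-^*\cL_-^\vee)\cong\cL_+$, and symmetrically $\flop(\cL_+^\vee)\cong\cL_-$.

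To upgrade this to an isomorphism of functors, represent $\flop$ and $\flopback$ by the Fourier--Mukai kernels $(q_+,q_-)_*\cO_{\widetilde X}$ and $(q_-,q_+)_*\cO_{\widetilde X}$ on the resolution (tracking the usual shift conventions). By the projection formula and the Picard identity, $(\placeholder\otimes\cL_-^\vee)\compose\flop$ has kernel $(q_+,q_-)_*(q_-^*\cL_-^\vee)\cong (q_+,q_-)_*\cO_{\widetilde X}(D)$ twisted by the pullback of $\cL_+$, so pushing forward $0\to\cO_{\widetilde X}\to\cO_{\widetilde X}(D)\to\cO_D(D)\to0$ yields a triangle of functors
\[\flop\compose(\placeholder\otimes\cL_+)\;\longrightarrow\;(\placeholder\otimes\cL_-^\vee)\compose\flop\;\longrightarrow\;\Psi\compose(\placeholder\otimes\cL_+),\]
with $\Psi$ the Fourier--Mukai functor of kernel $(q_+,q_-)_*\cO_D(D)$. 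Because $(q_+,q_-)|_D\colon D\xrightarrow{\ \sim\ }E_+\times E_-$ and $\cO_D(D)\cong\cO(-1,-1)$, the functor $\Psi$ factors through $\D(\operatorname{pt})$ via $\cO_{E_+}(-1)$ and $\cO_{E_-}(-1)$, and a direct check identifies it --- up to the shift conventions --- with $\flop\compose\sphFun\compose\sphFun^{\Radj}$ for the spherical functor $\sphFun\colon\D(\operatorname{pt})\to\Dres{X_+}{E_+}$, $\cO_{\operatorname{pt}}\mapsto\cO_{E_+}(-1)$, namely the one underlying $\twistFun_{\cO_{E_+}(-1)}$. Precomposing the triangle with $\flop^{-1}$ then turns it into (a rotation of) the defining triangle $\sphFun\compose\sphFun^{\Radj}\to\Id\to\twistFun_{\cO_{E_+}(-1)}$ of the spherical twist, postcomposed with $\placeholder\otimes\cL_+$; this gives the reformulated isomorphism of the first paragraph, hence the proposition.

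The real work is concentrated in this last step. One must decide which kernel represents the Bondal--Orlov flop functor --- the honest fibre product $X_+\times_Y X_-$ is non-reduced along $D$, so it is cleanest to work on the smooth resolution (or in the local model) and track the resulting shift; one must compute the derived pullback $Lq_+^*$ of a sheaf supported on $E_+$; and, above all, one must identify $\Psi$ with the spherical functor of $\twistFun_{\cO_{E_+}(-1)}$ precisely enough, including the degree shift, that the two triangles coincide --- this is where Proposition~\ref{proposition comparison} and Assumption~\ref{divisor assumption} genuinely enter. Quoting the flop/twist relation from Toda or Donovan--Wemyss sidesteps this bookkeeping.
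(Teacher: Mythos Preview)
Your argument is correct, and the central Picard computation $q_-^*\cL_-^\vee\otimes q_+^*\cL_+^\vee\cong\cO_{\widetilde X}(D)$ is exactly the one the paper makes. However, the route is much longer than necessary. The paper does not reformulate via Proposition~\ref{proposition comparison} at all, nor does it set up a triangle of kernels: it simply observes that since $\flop=p_{-*}\circ p_+^*$ is an equivalence, its inverse is its right adjoint, namely
\[
\flop^{-1}=p_{+*}\circ p_-^!=p_{+*}\big(\omega_{p_-}\otimes p_-^*(\placeholder)\big),\qquad \omega_{p_-}\cong\cO_{\widehat X}(\widehat E),
\]
and then the Picard identity plus the projection formula give $\flop^{-1}\cong(\placeholder\otimes\cL_+^\vee)\circ p_{+*}p_-^*\circ(\placeholder\otimes\cL_-^\vee)$ in one line. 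Your detour through the flop--flop identity and the kernel triangle is self-consistent, but it forces you to track shifts, to identify $\flop^{-1}\Psi$ with $\sphFun\sphFun^{\Radj}[1]$, and to argue that the resulting triangle \emph{as a triangle} matches the defining one for the spherical twist --- all work that the adjoint formula sidesteps entirely. In short: same key lemma, but you are using it to prove a harder equivalent statement instead of the original one.
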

\begin{proof}Blowing up $X_\pm$ in $E_\pm$  gives a birational roof $\widehat{X}$ as follows.
\[ X_- \!\xleftarrow{\,\,p_-\,} \widehat{X} \xrightarrow{\,\,p_+\,\,} X_+ \]
To calculate the left-hand side of the claimed isomorphism, we use that $\flop=p^{\phantom{*}}_{-*} \circ p_+^*$, so by taking right adjoints we have \[\flop^{-1}=p_{+*} \circ p_-^! = p_{+*} \circ (\omega_{p_-} \! \otimes \placeholder) \circ p_-^*.\]
Here $\omega_{p_-} \cong \cO_{\widehat{X}}(\widehat{E})$, where $\widehat{E}$ is the shared exceptional locus of $p_\pm$.

Consider now the line bundle
\[ p_-^* \cL_-^\vee  \otimes p_+^* \cL_+^\vee. \]
This is trivial on $\widehat{X}-\widehat{E} \cong X_\pm - E_\pm$ by Assumption~\ref{divisor assumption}(\ref{divisor assumption 2}), and is therefore isomorphic to $\cO_{\widehat{X}}(k \widehat{E})$ for some integer $k$. It restricts to $\widehat{E} \cong \mathbb{P}^1 \times \mathbb{P}^1$ as $\cO(-1,-1)$ by Assumption~\ref{divisor assumption}(\ref{divisor assumption 1}), and the same is true for $\cO_{\widehat{X}}(\widehat{E})$ by an adjunction argument, see for instance \cite[Section~11.3]{HuybrechtsFM}: we deduce~$k=1$.  An isomorphism of functors $\D(X_-) \to \D(X_+) $ as in the statement then follows by the projection formula. Finally, we note that the functors restrict to the categories $\Dres{X_\pm}{E_\pm}$, and the result is proved.
\end{proof}

The following relates $\cQ$ to the schober $\cP$ on $\mathbb{C}$ from Subsection~\ref{subsection schober on C}.

\begin{proposition}\label{proposition schober SKMS} In the setting of Theorem~\ref{theorem schober SKMS}, the pullback $g^*\cQ^\circ$ to $\mathbb{C}-\mathbb{Z}$ is a refinement of $\cP^\circ$  from Subsection~\ref{subsection schober on C}.
\begin{proof}
Observe that $g^{-1}(x_\pm) = \pm i + \mathbb{Z} $, for $x_\pm$ the basepoints from the proof of Theorem~\ref{theorem schober SKMS}. It follows immediately that $g^*\cQ^\circ$ is a local system on $\mathbb{C}-\mathbb{Z}$ given as follows.
\begin{center}
\begin{tikzpicture}
 \node at (0,0) {$\pullbackPic{$\Dres{X_-}{E_-}$}{$\Dres{X_+}{E_+}$}{$\otimes\cL_-$}{$\otimes\cL_+$}{$\mathsf{F}$}{$\mathsf{F}$}$};
\end{tikzpicture}
\end{center}
This is a refinement of $\cP^\circ$ as shown in Figure~\ref{figure per sheaf eg full}, because $\twistFun^{w}$ is inverse to \[\big(\placeholder\otimes\cL_+^{\otimes w+1}\big)\mathsf{F}\mathsf{F}\big(\placeholder\otimes\cL_+^{ \vee \otimes w+1 }\big)\] using Proposition~\ref{proposition comparison}, and so the claim follows.
\end{proof}
\end{proposition}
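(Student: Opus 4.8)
The plan is to realise $g$ as an infinite cyclic covering of $M$, read off the pulled-back local system from the covering data, and then match its monodromy about each integer with the twist $\twistFun^w$ attached to $\cP^\circ$, using Proposition~\ref{proposition comparison} and Assumption~\ref{divisor assumption}.

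First I would identify the covering. Writing $h(u)=(1-u)/(1+u)$ for the M\"obius map, which is injective with $h(1)=0$ and $h(\{0,\infty\})=\{1,-1\}$, we have $g=h\circ\exp(2\pi i\,\placeholder)$, so $g$ factors as $\mathbb{C}\to\mathbb{C}^*\xrightarrow{h}\overline{M}$ with $g^{-1}(0)=\mathbb{Z}$; restricting over $M=\overline{M}-\{0\}$ gives a covering $g\colon\mathbb{C}-\mathbb{Z}\to M$ with deck group $\mathbb{Z}$ acting by $z\mapsto z+1$. In particular $g^{-1}(x_\pm)=\pm i+\mathbb{Z}$ for the basepoints $x_\pm=g(\pm i)$ of the proof of Theorem~\ref{theorem schober SKMS}, so $g^*\cQ^\circ$ is $(\pm i+\mathbb{Z})$-coordinatized, assigning $\Dres{X_\pm}{E_\pm}$ to each point of $\pm i+\mathbb{Z}$ and having as monodromies the $g_*$-images of the generators of $\pi_1(M,\{x_\pm\})$ occurring in Theorem~\ref{theorem schober SKMS}(\ref{theorem schober SKMS 1}); this is exactly the periodic diagram displayed in the statement.

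To show that this refines $\cP^\circ$ in the sense of Definition~\ref{definition refinement}, I would choose (up to isomorphism of local systems, as in the discussion preceding that definition) a presentation of $\cP^\circ$ based at a single point $x$, taken to lie in $i+\mathbb{Z}$, with $\cD=\Dres{X}{E}\cong\Dres{X_+}{E_+}$ agreeing with the category $g^*\cQ^\circ$ assigns to $x$ via Proposition~\ref{theorem.standard_flop}, and then check that the restriction of the $\pi_1(\mathbb{C}-\mathbb{Z},\pm i+\mathbb{Z})$-action of $g^*\cQ^\circ$ to $\pi_1(\mathbb{C}-\mathbb{Z},x)$ equals that of $\cP^\circ$. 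A standard loop about the integer $w+1$ pushes forward under $g$ to a small loop about $p$, conjugated by $w+1$ applications of the deck transformation; tracing the diagram of Theorem~\ref{theorem schober SKMS}, the loop about $p$ acts by the flop--flop functor $\flopflop$ (the composition of the two half-monodromies $\flop$) and the deck transformation by a line-bundle twist $\placeholder\otimes\cL_+$, so the corresponding monodromy of $g^*\cQ^\circ$ is the conjugate $(\placeholder\otimes\cL_+^{\otimes w+1})\circ\flopflop\circ(\placeholder\otimes\cL_+^{\vee\otimes w+1})$. By Proposition~\ref{proposition comparison}, $\flopflop$ is inverse to the twist of $\cO_{E_+}(-1)$; conjugating by $\placeholder\otimes\cL_+^{\otimes w+1}$ and using Assumption~\ref{divisor assumption}(\ref{divisor assumption 1}) together with the identity $\twistFun_{\Psi\circ\sphFun}\cong\Psi\circ\twistFun_\sphFun\circ\Psi^{-1}$ from Section~\ref{subsection schober multiple}, this conjugate is inverse to the twist of $\cO_{E_+}(w)$, i.e.\ to $\twistFun^w$. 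Comparing with the convention of Figure~\ref{figure per sheaf eg full}, in which $\cP$ carries monodromy $\twistFun^w$ about $w+1$, this is precisely what is required once the two diagrams are read with compatible orientations, and the refinement follows.

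The main obstacle is this last orientation and indexing bookkeeping: one must pin down the sense in which the half-monodromy paths of Theorem~\ref{theorem schober SKMS} run, which of the two boundary points of $\overline{M}$ the deck transformation $z\mapsto z+1$ winds around, and how a counterclockwise loop about $w+1$ lifts, so that the conjugated flop--flop functor is matched with the correct twist $\twistFun^w$ rather than its inverse or $\twistFun^{w\mp 1}$. The remaining points --- that the assigned categories agree and that $\flop$ and the line-bundle twists preserve the support subcategories $\Dres{X_\pm}{E_\pm}$ --- are immediate from Theorem~\ref{theorem schober SKMS} and the equivalences of Proposition~\ref{theorem.standard_flop}.
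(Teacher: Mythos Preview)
Your proposal is correct and follows essentially the same route as the paper's proof: identify the preimages $g^{-1}(x_\pm)=\pm i+\mathbb{Z}$, describe the pulled-back local system as the periodic diagram, and then match the monodromy about $w+1$ with $\twistFun^w$ via Proposition~\ref{proposition comparison}. Your write-up is in fact more detailed than the paper's --- you spell out the factorisation of $g$ through $\exp$, the conjugation of $\flopflop$ by the deck transformation, and the use of the identity $\twistFun_{\Psi\circ\sphFun}\cong\Psi\circ\twistFun_\sphFun\circ\Psi^{-1}$ together with Assumption~\ref{divisor assumption}(\ref{divisor assumption 1}) --- whereas the paper simply asserts the key isomorphism between $\twistFun^w$ and the inverse of the conjugated flop--flop. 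Your flagging of the orientation and indexing bookkeeping as the residual issue is apt: the paper is equally terse on this point, and resolving it amounts to fixing a consistent choice of loop direction in both pictures.
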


\subsection{Schober on compactification}\label{subsection compactif} Under the following assumption, the schober~$\cQ$ extends from $\overline{M} = \mathbb{P}^1 - \{\pm 1\}$ to~$\mathbb{P}^1$ in a straightforward manner.

\begin{assumption}\label{strong divisor assumption} Assume there exist  smooth divisors $Z_\pm$ of $X_\pm$  such that:
\begin{enumerate}
\item\label{strong divisor assumption 1} the bundle $ \cO_{X_\pm}(Z_\pm)$ is isomorphic to $\cL_\pm$; 
\item\label{strong divisor assumption 2} the divisors $Z_\pm$ intersects  $E_\pm$ transversely in a single point $x_\pm$.
\end{enumerate}
\end{assumption}  

\begin{proposition} In the setting of Theorem~\ref{theorem schober SKMS}, the schober $\cQ$ extends from $\overline{M}$ to $\mathbb{P}^1$ if Assumption~\ref{strong divisor assumption} holds.

\begin{proof}
We apply Proposition~\ref{proposition schober extend} twice to $\cQ$, taking points $\pm 1$ respectively. For this we need to express, for instance, $\placeholder\otimes\cL_+ \cong \placeholder\otimes\cO_{X_+}(Z_+)$ as a twist of a spherical functor.

We proceed as in Addington~\cite[Section~1.2, Example~4]{Add}. Recall that the cotwist of a functor $\sphFun$ is defined as follows
\begin{equation*}\label{equation cotwist def}\cotwistFun_\sphFun := \operatorname{\sf Cone}\big( \Id \xrightarrow{\text{\,{unit}\,\,}} \sphFun^{\Radj} \compose \sphFun \,\big)\end{equation*}
where $\sphFun^{\Radj}$ denotes a right adjoint. Dropping the subscripts $+$ for brevity, let $i$ be the inclusion of $Z$ in $X$, and take~$\sphFun=i^*$. Then $\sphFun^{\Radj} \compose \sphFun = i_* i^* \cong \placeholder\otimes i_*\cO_Z$ and thence 
\[\cotwistFun_\sphFun = \placeholder\otimes \cO_X(-Z)[1].\] 
Now by Anno--Logvinenko~\cite[Proposition~5.3]{AL2} (noting that the definition of cotwist here differs by a shift) we have a left adjoint
\[\cotwistFun_\sphFun^{\Ladj} \cong \operatorname{\sf Cone}\big( \, \sphFun^{\Ladj} \compose \sphFun \xrightarrow{\text{\,{counit}\,\,}}  \Id\big)[-1] = \twistFun_{\sphFun^{\Ladj}}[-1]. \end{equation*}
Combining, we see that the spherical functor
\[ \sphFun^{\Ladj} = i_! = i_* (\omega_i[\operatorname{dim} i] \otimes \placeholder) \cong i_* (\det \cN_Z X \otimes \placeholder) [-1] \]
from $\D(Z)$ to $\D(X)$ has the required twist, and we are done.
\end{proof}
\end{proposition}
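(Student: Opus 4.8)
The plan is to apply the extension criterion of Proposition~\ref{proposition schober extend}(\ref{proposition schober extend 1}) twice, once at each of the two punctures $+1$ and $-1$ of $\overline{M} = \mathbb{P}^1 - \{\pm 1\}$: first regard $\cQ$ as a schober on $(\Sigma^{+1}, \{0\})$ with $\Sigma = \mathbb{P}^1 - \{-1\}$, extend it over $+1$, then repeat with $-1$ in place of $+1$, arriving at a schober on $(\mathbb{P}^1, \{0, \pm 1\})$; Proposition~\ref{proposition schober extend}(\ref{proposition schober extend 2}) then guarantees that the restriction to $M$ is unchanged up to isomorphism. Each application needs as its only input a presentation of the relevant local monodromy as the twist of a spherical functor. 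By the description of $\cQ^\circ$ in Theorem~\ref{theorem schober SKMS}(\ref{theorem schober SKMS 1}), the monodromy around $\pm 1$ is the line-bundle twist $\placeholder\otimes\cL_\pm$ on $\Dres{X_\pm}{E_\pm}$, possibly conjugated by the flop equivalence $\flop$ according to which basepoint one works over; since conjugating a spherical functor by an equivalence conjugates its twist (the remark following Definition~\ref{transformation for Kshadows}), the proposition reduces to realizing $\placeholder\otimes\cL_\pm$ itself as a spherical twist of a functor into $\Dres{X_\pm}{E_\pm}$.

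For this I would follow Addington~\cite[Section~1.2, Example~4]{Add}, invoking Assumption~\ref{strong divisor assumption}. Dropping subscripts, let $i\colon Z \hookrightarrow X$ be the inclusion of the smooth divisor with $\cO_X(Z) \cong \cL$, and set $\sphFun = i^*$, which is spherical by Addington's example. Using the structure triangle $\cO_X(-Z) \to \cO_X \to i_*\cO_Z$ one computes $\sphFun^{\Radj}\compose\sphFun = i_*i^* \cong \placeholder\otimes i_*\cO_Z$ and hence a cotwist $\cotwistFun_{\sphFun} \cong \placeholder\otimes\cO_X(-Z)[1]$. By the comparison of cotwists and adjoint twists of Anno--Logvinenko~\cite[Proposition~5.3]{AL2}, the left adjoint $\sphFun^{\Ladj} = i_!$ is again spherical, with twist $\twistFun_{\sphFun^{\Ladj}} \cong \cotwistFun_{\sphFun}^{\Ladj}[1] \cong \placeholder\otimes\cO_X(Z) \cong \placeholder\otimes\cL$; explicitly $i_! = i_*\big(\omega_i[\dim i]\otimes\placeholder\big) \cong i_*\big(\det\cN_Z X \otimes\placeholder\big)[-1]$, from $\D(Z)$ to $\D(X)$.

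I do not expect one decisive obstacle, but rather a few routine checks. The one that most deserves care is that $i^*$ and $i_!$ descend to functors between the support-conditioned subcategories and remain spherical there: this is exactly where Assumption~\ref{strong divisor assumption}(\ref{strong divisor assumption 2}) --- that $Z$ meets $E$ transversely in the single point $x_\pm$ --- is used, identifying the source of the restricted functor with the relevant category supported over $x_\pm$ and ensuring the twist $\placeholder\otimes\cL$ preserves $\Dres{X}{E}$, which it visibly does. A lesser and harmless point is matching the orientation of the extending loop in Proposition~\ref{proposition schober extend} with whether $\placeholder\otimes\cL$ or its inverse $\placeholder\otimes\cL^\vee$ is required: both are spherical twists, as the inverse of a spherical twist is again one. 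Finally, the disks and arcs demanded by Proposition~\ref{proposition schober extend} can be chosen near $+1$ and $-1$ in disjoint neighbourhoods, with the two extensions carried out in succession, so no further topological input is needed.
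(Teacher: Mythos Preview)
Your proposal is correct and follows essentially the same route as the paper: apply Proposition~\ref{proposition schober extend} twice at the punctures $\pm 1$, reducing to presenting $\placeholder\otimes\cL_\pm$ as a spherical twist via Addington's divisor example with $\sphFun=i^*$, computing the cotwist as $\placeholder\otimes\cO_X(-Z)[1]$, and then invoking Anno--Logvinenko to identify the twist of $\sphFun^{\Ladj}=i_!$. If anything you are more careful than the paper's proof, which omits the check that the spherical functor restricts to the support-conditioned category $\Dres{X}{E}$ --- precisely the point where you correctly flag that Assumption~\ref{strong divisor assumption}(\ref{strong divisor assumption 2}) enters.
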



\end{document}